\newcommand{\autorefcheckize}[1]{%
  \expandafter\let\csname @@\string#1\endcsname#1%
  \expandafter\DeclareRobustCommand\csname relax\string#1\endcsname[1]{%
    \csname @@\string#1\endcsname{##1}\wrtusdrf{##1}}%
  \expandafter\let\expandafter#1\csname relax\string#1\endcsname
}
\declaretheorem[numberwithin=section]{theorem}
\declaretheorem[sibling=theorem, name=Lemma]{lem}
\declaretheorem[sibling=theorem, name=Corollary]{cor}
\declaretheorem[sibling=theorem, name=Remark]{rem}
\declaretheorem[numberwithin=section, name=Definition]{defn}
\numberwithin{equation}{section}
\newcommand{\norm}[1]{\left\lVert#1\right\rVert}
\newcommand{\abs}[1]{\left\lvert#1\right\rvert}
\newcommand{\set}[1]{\left\{#1\right\}}
\newcommand{\hin}[2]{\left\langle#1,#2\right\rangle}
\newcommand{\tr}{\mathrm{tr}}
\newcommand{\mV}{\mathcal{X}}
\newcommand*{\rmn}[1]{\romannumeral#1}
\newcommand*{\Rmn}[1]{\uppercase\expandafter{\romannumeral#1}}
\DeclareMathOperator{\Div}{div}
\journal{***}
\begin{document}

\begin{frontmatter}
\title{Optimal Liouville theorems for the Lane-Emden equation on  Riemannian manifolds}

\author[buct]{Jie He}
\ead{hejie@amss.ac.cn}

\author[xtu]{Linlin Sun}
\ead{sunll@xtu.edu.cn}

\author[gzhu,amss,ucas]{Youde Wang\corref{wyd}}
\ead{wyd@math.ac.cn}

\address[buct]{School of Mathematics and Physics, Beijing University of Chemical Technology, Beijing 100029, China}

\address[xtu]{School of Mathematics and Computational Science, Xiangtan University, Xiangtan 411105, China}

\address[gzhu]{School of Mathematics and Information Sciences, Guangzhou University, Guangzhou 510006, China}
\address[amss]{State Key Laboratory of Mathematical Sciences (SKLMS), Academy of Mathematics and Systems Science, \\Chinese Academy of Sciences, Beijing 100190, China}
\address[ucas]{School of Mathematical Sciences, University of Chinese Academy of Sciences, Beijing 100049, China}

\cortext[wyd]{Corresponding author.}

\begin{abstract}
This paper study the Liouville-type theorems and universal estimates of positive solutions for the $p$-Laplacian Lane-Emden type equations on Riemannian manifolds. In particular, we prove an optimal Liouville theorem, demonstrating the nonexistence of positive solutions for subcritical $p$-Laplacian Lane-Emden equations on complete noncompact Riemannian manifolds with nonnegative Ricci curvature. This resolves a longstanding open problem in the field. Our approach features two key innovations:
(i) an adapted Nash-Moser iteration scheme yielding universal log-gradient estimates for the degenerate elliptic operator, and
(ii) new integral estimates that produce sharp nonexistence results.
These techniques enable us to extend the  Euclidean quasilinear theory of Serrin-Zou (Acta Math. 189, 79-142, 2002) to general Riemannian manifolds.
\end{abstract}

\begin{keyword}
Liouville theorems, Quasi-linear PDEs, Lane-Emden equations, elliptic equations on manifolds

\MSC[2020]Primary 58J05, 35B53, 35J92; Secondary 35B45, 35B08, 35B09.
\end{keyword}

\end{frontmatter}

\section{Introduction}

In this paper we are concerned with the following equation
\begin{align*}
\Delta_p u + f(u)=0,\quad p>1
\end{align*}
which is defined on a complete Riemannian manifold  of a Ricci curvature lower bound. In the case $p=2$ and $f(u)\equiv u^{\alpha}$, this equation is just the well-known Lane-Emden equation.

In the seminal paper \cite{GidSpr81global} Gidas and Spruck proved that there are no positive solutions to the Lane-Emden equation (see \cite[Theorem 1.1]{GidSpr81global}), i.e.,
\begin{align}\label{eq:lane-emden-2}
-\Delta u=u^{\alpha},\quad\mbox{in} ~~\mathbb{R}^n
\end{align}
if $n>2$ and
\begin{align*}
1\leq\alpha<\dfrac{n+2}{n-2}.
\end{align*}
This result can be regarded as an extension of the classical Liouville theorem for harmonic functions defined on the Euclidean space states that \emph{any real, positive, and entire harmonic function must be a constant}.

Gidas and Spruck's result can be generalized to quasilinear elliptic equations. For example, Serrin and Zou \cite[Corollary \Rmn{2}]{SerZou02cauchy} proved that the classical Lane-Emden-Fowler equation
\begin{align}\label{eq:lane-emden}
-\Delta_pu=u^{\alpha}
\end{align}
admits no positive solution in $\mathbb{R}^n$ if
\begin{align*}
  0<\alpha <\dfrac{(n+1)p-n}{n-p}\quad \text{and} \quad 1<p<n.
\end{align*}
Actually, the above Liouville theorems established in \cite{GidSpr81global, SerZou02cauchy} for \eqref{eq:lane-emden-2} and \eqref{eq:lane-emden} have been extended to the cases
\begin{align*}
    -\infty <\alpha <\dfrac{n+2}{n-2}\quad\quad \text{and} \quad\quad -\infty <\alpha <\dfrac{(n+1)p-n}{n-p}
\end{align*}
respectively, for details we refer to \cite{WanWei23on, HeWanWei24gradient}. It should be mentioned that this result fails for any
$$\alpha\geq\dfrac{(n+1)p-n}{n-p}.$$
The typical example is the Emden solution on $\mathbb R^n$ when $\alpha=\frac{(n+1)p-n}{n-p}$
\begin{align*}
u_{\lambda, x_0}(x)\coloneqq\left(\dfrac{\lambda^{\frac{1}{p-1}}n^{\frac{1}{p}}\left(\frac{n-p}{p-1}\right)^{\frac{p-1}{p}}}{\lambda^{\frac{p}{p-1}}+\abs{x-x_0}^\frac{p}{p-1}}\right)^{\frac{n-p}{p}},
\end{align*}
where $\lambda>0$ is any positive parameter and $x_0\in\mathbb R^n$.

The equations \eqref{eq:lane-emden-2} and \eqref{eq:lane-emden} on an Euclidean space $\mathbb{R}^n$ have been extensively studied by many mathematicians. Without being exhaustive with the huge amount of references concerned with this topic, let us mention the works \cite{BidMar89local, BidMarVer91nonlinear, CafGidSpr898asymptotic, CheLi91classification, Frank24, NiSer86nonexistence, LinMa24, MaOu23liouville, Phuc08} and references therein. Besides, in \cite{GidSpr81global, SerZou02cauchy} the following equation
\begin{align}\label{general}
\Delta u + f(u)=0, \quad \mbox{in}~~\mathbb{R}^n
\end{align}
was also be studied and some universal estimates for positive solutions to \eqref{general} were established (also see \cite{Dancer, PQS, QS}). In particular, Yanyan Li and Lei Zhang \cite{Li-Z} in 2003 studied the equation and proved the following results \cite[Theorem 1.3]{Li-Z}: Suppose that $f$ is locally bounded on $(0, +\infty)$ and $s^{-\frac{n+2}{n-2}}f(s)$ is non-increasing on $(0, +\infty)$. Assume that $u$ is a positive classical solution of the above equation \eqref{general} in $\mathbb{R}^n$ with $n \geq 3$, then either for some $b>0$,
$$bu(x)= \left(\dfrac{\mu}{1 + \mu^2\abs{x - \bar{x}}^2}\right)^{\frac{n-2}{2}}$$
or $u \equiv a$ for some $a > 0$ such that $f(a)=0$.
\medskip

The Lane-Emden type equation also appears in fluid mechanics and conformal differential geometry. When domain manifold $M^n$ is the two sphere $\mathbb{S}^2$, the equation \eqref{eq:lane-emden-2} on $\mathbb{S}^2$ has a deep relationship with the stationary solutions to Euler's equation on $\mathbb{S}^2$ (such as zonal flows, Rossby-Haurwitz planetary waves and Stuart-type vortices). We refer to \cite{CCKW, CG, CK} and references therein for recent developments of stationary solutions of Euler's equation on $\mathbb{S}^2$.

On the other hand, the renowned Yamabe equation, a pivotal problem in conformal geometry, seeks to ascertain the existence of a conformal metric with constant scalar curvature on any closed Riemannian manifold. Specifically, for $n\geq3$, the question arises: can we identify a conformal metric $\tilde g=u^{\frac{4}{n-2}}g$ such that the scalar curvature of $\tilde g$ remains constant? Consequently, the Yamabe problem is tantamount to solving the following partial differential equation
\begin{align*}
-\Delta_g u+\dfrac{n-2}{4(n-1)}Ru=\lambda u^{\frac{n+2}{n-2}}
\end{align*}
where $R=R(g)$ denotes the scalar curvature associated with the initial metric $g$ and $\lambda$ represents a constant.
\vspace{2ex}

In fact, \emph{whether or not Liouville theorems pertaining to the Lane-Emden-Fowler equation on Riemannian manifolds with nonnegative Ricci curvature are true for
$$-\infty <\alpha <\frac{(n+1)p-n}{n-p}\quad \mbox{and} \quad 1<p<n$$
is a long-standing open problem.}

It should be mentioned that Gidas and Spruck \cite[Theorem 1.2]{GidSpr81global} demonstrated that no positive solution to the Lane-Emden equation \eqref{eq:lane-emden-2} exists on any complete Riemannian manifold of dimension $n$ with nonnegative Ricci curvature, under the condition
\begin{align*}
1\leq\alpha<\dfrac{n+2}{n-2}.
\end{align*}
It is noteworthy that their proof employed a crucial estimate for the Laplacian cutoff function $\zeta$ which satisfies
\begin{align*}
\zeta\in C_0^{\infty}\left(B_{R}\right),\quad 0\leq \zeta\leq 1,\quad \zeta\vert_{B_{R/2}}=1,\quad \abs{\nabla\zeta}\leq\dfrac{C}{R},\quad\abs{\Delta\zeta}\leq\dfrac{C}{R^2}.
\end{align*}
It is highly nontrivial for us to prove the existence of such kind of cutoff functions on complete Riemannian manifolds. Indeed, if the Ricci curvature is nonneagtive, then one can prove the existence of Laplacian cutoff functions from Riemannian rigidity theory (cf. \cite[Theorem 6.33]{CheCol96lower} or \cite[Lemma 1.5]{WanZhu19structure}), with a careful scaling argument (see \cite[Theorem 2.2, (b)]{Gun16sequences} or \cite[Corollary 2.3]{BiaSet18laplacian}). Recently, Catino and Monticelli \cite{Catino} gave a new proof of the Liouville theorem \cite[Theorem 1.2]{GidSpr81global} on the equation \eqref{eq:lane-emden-2}. In fact, Catino and Monticelli \cite[Theorem 1.7]{Catino} made also use of the same
cutoff function as in \cite[Theorem 1.2]{GidSpr81global} to prove the conclusion. Later on Lu \cite{Lu23semilinear} used maximum priciple to establish Cheng-Yau type gradient estimate and hence to prove the Liouville theorem due to Gidas and Spruck. Very recently, Ciraolo, Farina, and Polvara employed a different method to provide a very simple proof in \cite{CirFarPol24classification}.

On the other hand, very recently the first and third named author and Wei \cite{HeWanWei24gradient} have shown that \eqref{eq:lane-emden} does not admits any positive solution if $p>1$ and
\begin{align*}
-\infty<\alpha<\dfrac{(n+3)(p-1)}{n-1},
\end{align*}
but, obviously
$$\dfrac{(n+3)(p-1)}{n-1}<\dfrac{(n+1)p-n}{(n-p)^+}=\begin{cases}\frac{(n+1)p-n}{n-p},&p<n,\\
+\infty,&p\geq n.
\end{cases}$$
\medskip

\emph{The goal of this paper is to answer the above problem, i.e., to show the optimal Liouville theorem and establish some universal $\log$-gradient estimates for Lane-Emden-Fowler equation. More concretely, we will show that the equation $\Delta_p u + u^{\alpha}=0$ does not admit any positive solution if $\alpha\in (-\infty,\, p_s)$ where $p_s$ is the Sobolev critical exponent given by
$$p_s\equiv\dfrac{(n+1)p-n}{(n-p)^+}$$
(see \autoref{liouville}); and we also establish an universal $\log$-gradient estimate for positive solutions to \eqref{general} on a complete manifold (see \autoref{thm:boundedness} and \autoref{thm:global-gradient-estimate}).}

\emph{Here, by using the word ``universal”, we mean that our bounds are not only independent of any given solution under consideration but also do not require, or assume, any boundary conditions whatsoever. For instance, the exact Cheng-Yau $\log$-gradient estimates is an universal a priori gradient estimates.}
\medskip

Next, let us recall some other closely related literature on Lane-Emden equations defined on complete Riemannian manifolds. In 1975, Cheng-Yau's renowned gradient estimate for harmonic functions \cite{CheYau75} demonstrated that every positive harmonic function on any complete Riemannian manifold with nonnegative Ricci curvature is constant. This result was extended to  $p$-harmonic function by Kostschwar and Ni \cite{KotNi09local} provided that the sectional curvature is nonnegative. Subsequently, Wang and Zhang \cite{WanZha11local} initially employed the Moser iteration technique to investigate local gradient estimates and Harnack inequalities for $p$-harmonic functions on Riemannian manifolds. They established that any positive $p$-harmonic function on any complete Riemannian manifold with nonnegative Ricci curvature is constant. Notably, Wang and Zhang's result required only the lower bound of the Ricci curvature, thereby extending Cheng-Yau's result for harmonic functions to $p$-harmonic functions and significantly enhancing Kotschwar and Ni's finding.

Using Cheng-Yau's method, Li \cite{Li91gradient} considered the more general Lane-Emden type equation \eqref{eq:lane-emden-2} and obtained several gradient estimates and Harnack inequalities on Riemannian manifolds with nonnegative Ricci curvature under the assumption $1\leq\alpha < \frac{n}{n-2}, \ n\geq 4$. Notably, he established a Liouville theorem concerning the nonexistence of positive solutions. When $n\geq 5$, Ma, Huang, and Luo \cite{MaHuaLuo18gradient} proved a gradient estimate by which one can derive the nonexistence of positive solutions for
$$-\infty<\alpha < \frac{2n^2 + 9n + 6}{2n(n+2)}.$$

The De Giorgi-Nash-Moser iteration method holds significant influence not only in the analysis of PDEs but also in geometric analysis. Zhang and Zhu \cite{ZhaZhu12Yau} introduced a variant of the Bochner formula applicable to Alexandrov spaces. By combining this adapted formula with the Nash-Moser iteration method, they derived Yau's gradient estimate for harmonic functions in these settings. More recently, Bellettini \cite{Bellettini25extensions} utilized the De Giorgi iteration to provide an alternative proof for the Schoen-Simon-Yau curvature estimates and related Bernstein-type theorems, further extending the original findings to encompass the case of six-dimensional stable minimal immersions.

Inspired by Wang and Zhang's approach, the third-named author of this paper and Wei \cite{WanWei23on} employed the Moser iteration strategy to prove a local gradient estimate of the Cheng-Yau type for positive solutions to the semilinear elliptic equation \eqref{eq:lane-emden-2} on Riemannian manifolds. Specifically, they derived a Liouville theorem for the Lane-Emden equation on complete Riemannian manifolds with dimension $n$ and nonnegative Ricci curvature, given that
\begin{align*}
\alpha<\dfrac{n+1}{n-1}+\dfrac{2}{\sqrt{n(n-2)}}.
\end{align*}
This finding was later refined by the first and third named author of this paper and Wei \cite{HeWanWei24gradient}, who expanded the range of $\alpha$ to
\begin{align*}
\alpha<\dfrac{(n+3)(p-1)}{n-1}.
\end{align*}
Very recently, the third named author, A. Zhang and H. Zhao \cite{WanZhaZha24gradient} discussed the following equation
$$\Delta u + uh(\ln u)=0,\quad\mbox{in} ~~M^n,$$
where $h$ is a smooth function, and employ the Nash-Moser iteration technique to obtain some refined gradient estimates of the solutions to the above equation, if $(M^n,\,g)$ is of Ricci curvature lower bound and $h$ fulfills some suitable assumptions. This methodology can be applied to address other problems as well. Additional related results can be found in various sources \cite{HanHeWan23, HanHeWan23gradient, HeHuWan23nash, HeMaWan24, HeWan23universal, HeWanWei24gradient, HuaGuoGuo24gradient, HuaWan22gradient, WanWan24gradient, WanWan24boundedness, WanZha24gradient-1}.
\vspace{1ex}

Numerous mathematicians have studied differential inequalities on a complete manifold $(M, g)$, such as
\begin{align}\label{eq:lane-emden-3}
-\Delta_pu\geq u^{\alpha}.
\end{align}
Grigor'yan and Sun \cite{GriSun14nonnegative} examined the uniqueness of a nonnegative solution to this inequality for $p=2$. Notably, they utilized the condition of volume growth instead of relying on nonnegative Ricci curvature. Similar results hold for general $p>1$ \cite{Sun15nonexistence}. That is, there exists a no positive solution to the inequality \eqref{eq:lane-emden-3} provided $\alpha>p-1$ and for some fixed point $o\in M^n$ and some positive number $C$
\begin{align*}
\mathrm{Vol}\left(B_{R}(o)\right)\leq C R^{\frac{p\alpha}{\alpha+1-p}}\ln^{\frac{p-1}{\alpha+1-p}} R,\quad\forall R\gg 1.
\end{align*}
Consequently, using the Bishop-Gromov volume comparison theorem, it is known that every positive solution to the inequality \eqref{eq:lane-emden-3} on any complete Riemannian manifold with dimension $n$ and nonnegative Ricci curvature is constant, provided
\begin{align*}
p-1<\alpha<\dfrac{n(p-1)}{(n-p)^+}.
\end{align*}
This provides a new proof of Bidaut-Veron and Pohozaev's result \cite[Theorems 3.3 (\rmn{3}) and 3.4 (\rmn{2})]{BidPoh01nonexistence} for the Euclidean case when $1<p<n$. It should be mentioned that the range of $\alpha$ can be extended to
$$0< \alpha <\frac{n(p-1)}{n-p}$$
by Serrin and Zou \cite[Theorem \Rmn{2} (d)]{SerZou02cauchy}. Other related results can be found in various sources \cite{Hua15liouville, LiuSunXia24quasilinear, SunXiaXu22sharp, WanZha24gradient-1, GriSunVer20superlinear, WanXia16constructive, Zha15note} and references therein.

Besides, some mathematicians studied the classification to positive solutions to critical $p$-Laplace equation on a complete Riemannian manifold, which is just equation \eqref{eq:lane-emden-2} with $\alpha=p_s$, i.e.,
$$\Delta_p u + u^{p_s}=0.$$
For more details we refer to \cite{Catino, CatMonRon23on, CirFarPol24classification, Ou22on, Vet24note} and references therein. We also should mentioned that one studied the Liouville equation which is closely related to Lane-Emden equation and Yamabe equation, we refer to \cite{EreGuiLiXu25rigidity, Fazly25} and references therein.

\section{Main results and ideas of  proof}

In this paper, we focus on the Liouville theorems for quasilinear elliptic partial differential equations  on Riemannian manifolds. The specific equation under consideration is a quasilinear PDE of the form
\begin{align}\label{eq:quasi-h}
-\Delta_pu=f(u), \quad\mbox{on}~~M^n
\end{align}
where $f\in C\left(\left[0,\infty\right)\right)\cap C^1\left(\left(0,\infty\right)\right)$ and $p\in(1,\infty)$ and $\Delta_p$ denotes the $p$-Laplacian which is defined by
\begin{align*}
\Delta_pu=\begin{cases}
\Div\left(\abs{\nabla u}^{p-2}\nabla u\right),&1<p<\infty,\\
\hin{\nabla_{\nabla u}\nabla u}{\nabla u},&p=+\infty.
\end{cases}
\end{align*}
One can check that $\Delta_2=\Delta$ is the classical Laplacian-Beltrami and
\begin{align*}
\Delta_pu=\abs{\nabla u}^{p-2}\Delta u+(p-2)\abs{\nabla u}^{p-4}\Delta_{\infty}u.
\end{align*}
This type of equation is particularly important in the study of geometric analysis and has broad applications in various fields of mathematics and physics.

\begin{defn}
A function $u\in W^{1,p}_{loc}\left(M\right)\cap L^{\infty}_{loc}\left(M\right)$ is said to be a weak solution to \eqref{eq:quasi-h} if
\begin{align*}
\int_M\abs{\nabla u}^{p-2}\hin{\nabla u}{\nabla\phi}=\int_Mf(u)\phi,\quad\forall \phi\in C_0^{\infty}\left(M\right).
\end{align*}
\end{defn}

Our attention is directed toward the subcritical case. A subsequent paper \cite{SunWan25critical} will address the critical case.

The simplest global result is
\begin{theorem}\label{liouville}
Let $M^n$ be a complete and $n$-dimensional Riemannian manifold with nonnegative Ricci curvature. If $p>1$ and $\alpha<p_s$, then there is no positive and weak solution to
\begin{align*}
-\Delta_p u=u^{\alpha}, \quad\mbox{on}~~M^n.
\end{align*}
\end{theorem}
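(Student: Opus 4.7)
The plan is to establish \autoref{liouville} by proving a sharp local logarithmic gradient estimate for positive solutions and then letting the ball radius tend to infinity. The strategy extends the Moser-iteration approach of Serrin--Zou \cite{SerZou02cauchy} and Wang--Wei \cite{WanWei23on} for the Euclidean and semilinear Riemannian cases by combining a refined Bochner formula with a multi-parameter family of integral test functions tailored to reach the Sobolev critical exponent $p_s$.

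Since weak solutions of the $p$-Laplace equation are only $C^{1,\beta}_{loc}$, I would first either regularize via $\Delta_{p,\varepsilon}u=\Div((\abs{\nabla u}^2+\varepsilon)^{(p-2)/2}\nabla u)$ and derive $\varepsilon$-uniform estimates, or restrict the analysis to the open set $\set{\nabla u\neq 0}$, where Tolksdorf regularity gives $u\in C^{2,\beta}$. On this smooth set, the Bochner formula applied to $\abs{\nabla u}^{p}$, a Kato-type refinement $\abs{\nabla^2 u}^2\ge\tfrac{n}{n-1}\abs{\nabla\abs{\nabla u}}^2$, the nonnegativity of $\mathrm{Ric}$, and the equation $\Delta_p u=-u^\alpha$ combine into a pointwise inequality, schematically
\[
\Delta_p^{\mathrm{lin}}\bigl(\abs{\nabla u}^p\bigr)\;\ge\;c_0(n,p)\,u^{2\alpha}\abs{\nabla u}^{2p-4}\;-\;c_1(n,p,\alpha)\,u^{\alpha-1}\abs{\nabla u}^{p},
\]
where $\Delta_p^{\mathrm{lin}}$ denotes the linearization of the $p$-Laplacian along $u$.

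I would then multiply both this differential inequality and the equation itself by test functions of the form $u^{\beta}\abs{\nabla u}^{\gamma}\zeta^{q}$, where $\zeta$ is a Cheng--Yau cutoff satisfying $\abs{\nabla\zeta}\le C/R$ and $\abs{\Delta\zeta}\le C/R^{2}$ on $B_{2R}$ (available under $\mathrm{Ric}\ge 0$, as recalled in the introduction). Integration by parts, H\"older and Young inequalities yield a system of coupled integral estimates with free parameters $\beta,\gamma,q$, which after elimination collapses to a single master inequality of the form
\[
\int_{B_R} u^{\beta+\alpha}\abs{\nabla u}^{\gamma}\zeta^{q}\;\le\;\frac{C}{R^{p}}\int_{B_{2R}} u^{\beta'+\alpha}\abs{\nabla u}^{\gamma'}\zeta^{q'},
\]
provided an explicit algebraic constraint $P(n,p,\alpha;\beta,\gamma,q)>0$ is satisfied. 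Once this is in place, Bishop--Gromov volume growth together with $R\to\infty$ forces the principal integral to vanish, so $\nabla u\equiv 0$; the equation then produces the contradiction $0=-\Delta_p u=u^{\alpha}>0$.

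The main obstacle is proving that the feasibility region of $P>0$ is nonempty for \emph{every} $\alpha<p_s$. Previous works \cite{WanWei23on,HeWanWei24gradient} stopped at $\alpha<(n+3)(p-1)/(n-1)$ because their single-parameter test functions were not flexible enough to capture the Sobolev scaling. I expect two possible routes: a two-parameter ansatz in which $\beta$ and $\gamma$ are coupled so that the quadratic part of $P$ factors as a perfect square plus a manifestly nonnegative remainder \emph{exactly} when $\alpha<p_s$, or a Nash--Moser iteration in which $\beta$ is driven to infinity along a geometric sequence. Either route will require delicate bookkeeping of the degenerate $p$-Laplacian constants, in particular tracking how the coefficient of the favorable Bochner term $c_{0}u^{2\alpha}\abs{\nabla u}^{2p-4}$ competes against the bad term $c_{1}u^{\alpha-1}\abs{\nabla u}^{p}$ uniformly as $\alpha\nearrow p_s$.
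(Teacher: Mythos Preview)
Your plan is to push the Moser-iteration/Cheng--Yau gradient-estimate method all the way to $p_s$ by enlarging the family of test functions $u^\beta\abs{\nabla u}^\gamma\zeta^q$, but this is precisely the route that has so far stalled at $\alpha<\frac{(n+3)(p-1)}{n-1}$; the paper itself remarks that for the remaining range ``it seems that the previous method does not work.'' Your honestly flagged ``main obstacle'' --- exhibiting admissible parameters with $P>0$ for every $\alpha<p_s$ --- is not a matter of bookkeeping: neither of the two routes you sketch (perfect-square factoring of the quadratic form, or driving $\beta\to\infty$) is known to close this gap, and nothing in your outline explains why the Sobolev exponent should emerge as the sharp threshold from a Bochner-based differential inequality of the schematic type you write down.

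The paper takes a genuinely different road for $\alpha\ge\frac{(n+3)(p-1)}{n-1}$. It writes an Obata-type divergence identity for the vector field $\mathbf{X}=u^{b}\abs{\nabla u}^{p-2}\nabla u$ (\autoref{lem:basic2}): with the specific choices $a=\frac{(n-1)p\alpha}{(n+1)p-n}$ and $b=-\frac{n(p-1)\alpha}{(n+1)p-n}$, the coefficient in front of $u^{a+2b-2}\abs{\nabla u}^{2p}$ is $\frac{(n-1)(p-1)\alpha}{(n+1)p-n}\bigl(1-\frac{(n-p)\alpha}{(n+1)p-n}\bigr)$, positive \emph{exactly} on $(0,p_s)$, while for $f(u)=u^\alpha$ the subcriticality term $\alpha f-uf'$ vanishes identically. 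Integrating this identity against $\eta^\gamma$ and using the algebraic control $\abs{\mathscr X}^2\le\frac{(p-1)^2+1}{2(p-1)}\tr(\mathscr X^2)$ (\autoref{lem:basic-3}) yields \autoref{lem:crucial-0}, in which only $\abs{\nabla\eta}$ appears --- no Laplacian or Hessian cutoff is required. The decisive closing step, absent from your outline, is the weak Harnack inequality for positive $p$-superharmonic functions (\autoref{lem:weak-hanack}): combined with Bishop--Gromov it converts the right-hand side into $CR^{n-pq}$ for a suitable $q$ (\autoref{thm:integral-estimate1}), forcing $u$ to be constant as $R\to\infty$. The range $\alpha<\frac{(n+3)(p-1)}{n-1}$ is handled separately via the gradient estimates of Section~4 (\autoref{main:cor-1}).
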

This is a special case of \autoref{thm:main4}. Since particular attention is given to the case where $p=2$ in the study of Liouville theorems pertaining to the Lane-Emden-Fowler equation on Riemannian manifolds, this means that in the case $p=2$ and $\dim(M^n)\geq 3$ the Lane-Emden equation \eqref{eq:lane-emden-2}, i.e.,
$\Delta u + u^{\alpha}=0$, does not admit any positive solution if $M^n$ is of nonnegative Ricci curvature and $\alpha<\frac{n+2}{n-2}$. In other words, we reprove the result due to Gidas and Spruck \cite{GidSpr81global}.

Another simple global result is
\begin{theorem}
Let $M^n$ be a complete and $n$-dimensional Riemannian manifold with nonnegative Ricci curvature. If $\alpha>p-1>0$, then there is no positive and weak solution to
\begin{align*}
-\Delta_p u=-u^{\alpha}, \quad\mbox{on}~~M^n.
\end{align*}
\end{theorem}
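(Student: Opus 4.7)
The plan is to derive a Caccioppoli-type integral inequality by testing the equation against a power of $u$ cut off by a bump function, and then to exploit the Bishop--Gromov polynomial volume bound. Since $-\Delta_p u=-u^{\alpha}$ means $\Delta_p u=u^{\alpha}\ge 0$, the solution $u$ is weakly $p$-subharmonic, and it is this ``wrong sign'' of the nonlinearity---rather than any Bochner or gradient estimate as in \autoref{liouville}---that drives the argument; indeed, the proof only uses the Ricci hypothesis through volume comparison and would in fact extend to manifolds of polynomial volume growth.

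Concretely, I would insert $\phi=u_N^{\beta}\eta^p$ into the weak formulation, where $u_N=\min\{u,N\}$, $\beta>0$ is a large parameter to be fixed later, and $\eta\in C_0^{\infty}(B_{2R})$ is a cutoff with $\eta\equiv 1$ on $B_R$ and $\abs{\nabla\eta}\le C/R$. Expanding, passing $N\to\infty$ (using $u\in W^{1,p}_{\mathrm{loc}}\cap L^{\infty}_{\mathrm{loc}}$), and applying Young's inequality with conjugate exponents $p/(p-1)$ and $p$ to absorb the term $\beta\int u^{\beta-1}\eta^p\abs{\nabla u}^p$ into the left-hand side, I expect to obtain
\[
\int_M u^{\alpha+\beta}\eta^p\le C(\beta)\int_M u^{p+\beta-1}\abs{\nabla\eta}^p.
\]

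Because $\alpha>p-1$, the exponent $q=(\alpha+\beta)/(p+\beta-1)$ is strictly greater than $1$, so Hölder's inequality reinserts $\int u^{\alpha+\beta}\eta^p$ on the right; after absorbing one reaches
\[
\int_{B_R}u^{\alpha+\beta}\le C(\beta)\,R^{-pq'}\mathrm{Vol}\left(B_{2R}\right),\qquad q'=\frac{\alpha+\beta}{\alpha-p+1}.
\]
Bishop--Gromov then gives $\mathrm{Vol}(B_{2R})\le CR^n$, so the right-hand side scales like $R^{\,n-p(\alpha+\beta)/(\alpha-p+1)}$. Since $\alpha-p+1>0$, this exponent is negative for all sufficiently large $\beta$, and letting $R\to\infty$ forces $\int_M u^{\alpha+\beta}=0$, contradicting $u>0$.

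The main obstacle I anticipate is bookkeeping rather than conceptual: first, the test function $u^{\beta}\eta^p$ is not directly admissible, so one must work with the truncation $u_N^{\beta}\eta^p$ and pass to the limit (routine given $u\in L^{\infty}_{\mathrm{loc}}$); second, in the Hölder step one must take the cutoff of the form $\eta=\psi^{k}$ for an auxiliary smooth $\psi$ with $k\ge q'$ to prevent negative powers of $\psi$ from appearing in $\int\abs{\nabla\eta}^{pq'}\eta^{-p(q'-1)}$. Both steps are standard but must be executed carefully to keep the constant $C(\beta)$ independent of $R$.
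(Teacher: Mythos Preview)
Your argument is correct. The Caccioppoli inequality you derive does come out with the signs you claim: testing $\Delta_p u = u^{\alpha}$ against $u^{\beta}\eta^{\gamma}$ with $\beta>0$ produces $\int u^{\alpha+\beta}\eta^{\gamma}+\beta\int u^{\beta-1}|\nabla u|^{p}\eta^{\gamma}$ on one side and a cross term on the other, and after Young's inequality both positive terms on the left are controlled by $C(\beta)\int u^{\beta+p-1}|\nabla\eta|^{p}\eta^{\gamma-p}$. Taking $\gamma\geq pq'$ (your substitution $\eta=\psi^{k}$ with $k\geq q'$) makes the subsequent H\"older absorption legitimate, and the final scaling exponent $n-p(\alpha+\beta)/(\alpha-p+1)$ is negative for large $\beta$. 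Since $u\in C^{1,\beta}_{\mathrm{loc}}$ by standard regularity and is strictly positive, on any compact set $u$ is bounded above and below by positive constants, so $u^{\beta}\eta^{\gamma}$ is admissible without truncation; your $u_{N}$ device is harmless but unnecessary.

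This is, however, \emph{not} the route the paper takes. In the paper, Theorem~2.2 is recorded as a special case of \autoref{main:cor-1}(b), which in turn follows from \autoref{thm:main1}: one observes that $f(t)=-t^{\alpha}$ is nonpositive and satisfies $\alpha f-tf'\equiv 0$, hence is subcritical with every exponent $\alpha'\leq\alpha$; choosing $\alpha'\in\bigl(p-1,\frac{(n+3)(p-1)}{n-1}\bigr)$ puts one in the range where the Cheng--Yau type gradient estimate \autoref{thm:boundedness} applies, forcing $|\nabla\ln u|\leq C/R\to 0$ and hence $u$ constant, which is impossible. So the paper's proof rests on the Bochner--Kato machinery and Moser iteration developed in Section~4.

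Your approach is considerably more elementary and, as you note, uses the Ricci hypothesis only through Bishop--Gromov, so it extends verbatim to manifolds with polynomial volume growth; indeed it is essentially the classical nonexistence argument (cf.\ Bidaut-V\'eron--Pohozaev, Mitidieri--Pohozaev). What the paper's approach buys in exchange for the extra work is a unified treatment of both signs of $f$ and quantitative gradient bounds (\autoref{thm:boundedness}, \autoref{thm:global-gradient-estimate}) as intermediate results, which are of independent interest and are needed anyway for the harder case $f\geq 0$.
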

This is a special case of \autoref{main:cor-1}.

\medskip

A function $f\in C^0([0,\infty))\cap C^1((0,\infty))$ is subcritical with exponent $\alpha$ if
\begin{align}\label{eq:conditions-f}
\alpha f(t)-tf'(t)\geq0,\quad\forall t>0.
\end{align}
In the original definition of \emph{subcritical} by Serrin and Zou \cite[(1.7)]{SerZou02cauchy},
$f$ is required to be non-negative and the constant $\alpha$ in \eqref{eq:conditions-f} is required to lie in
 $\left(0,\, \frac{np}{n-p}-1\right)$. In our theorem, we do not require this constraints.  In section 4, $f$ can be negative and $\alpha$ can also be negative.

Obviously, for $f(t)= t^\alpha$ we always have
$$\alpha f(t)-tf'(t)= 0, \quad \forall t >0.$$
It is easy to verify the following functions are all subcritical with exponent $\alpha$,
\begin{enumerate}
\item $f(t)= t^\alpha\left(\log(1+t)\right)^\beta, \quad\beta <0$;
\item $f(t)= t^\alpha+at^\beta,\quad a(\alpha-\beta)>0$;
\item $f(t)= \frac{t^\alpha}{1+t^\beta}, \quad\beta>0$.
\end{enumerate}

For the general quasilinear equation \eqref{eq:quasi-h}, we have the following
\begin{theorem}\label{thm:main1}
Let $M^n$ be a complete and $n$-dimensional Riemannian manifold with nonnegative Ricci curvature. Assume $p>1$ and $f$ is subcritical with exponent $\alpha\in\left(p-1,\,\frac{(n+3)(p-1)}{n-1}\right)$.
Then every positive and weak solution to \eqref{eq:quasi-h} is constant.
\end{theorem}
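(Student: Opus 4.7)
The plan is to extend the Moser-iteration log-gradient estimate of He, Wang, and Wei \cite{HeWanWei24gradient}, originally devised for the pure power nonlinearity $-\Delta_p u = u^\alpha$, to the class of subcritical $f$. The key structural observation is that the subcritical hypothesis $\alpha f(t)-tf'(t)\geq 0$ supplies the pointwise comparison
$$f'(u)\abs{\nabla u}^2 \leq \alpha\,\dfrac{f(u)}{u}\abs{\nabla u}^2 = -\alpha\,\dfrac{\Delta_p u}{u}\abs{\nabla u}^2,$$
with equality in the borderline case $f(t)=t^\alpha$. Consequently, once a Bochner-type identity is in hand, this inequality allows one to replace the $f'(u)$-terms by $\Delta_p u$-terms with the same coefficients as would appear for the pure power case, and the quasilinear analysis reduces to the iteration already executed in \cite{HeWanWei24gradient}.

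First I would appeal to the interior regularity theory of Tolksdorf and DiBenedetto to secure $u\in C^{1,\beta}_{\mathrm{loc}}(M)$ and $C^\infty$ away from the critical set $\set{\nabla u = 0}$, and regularize $\abs{\nabla u}^2 \rightsquigarrow \abs{\nabla u}^2+\varepsilon$ to legitimize the differentiations below. Then I would differentiate the equation $-\Delta_p u = f(u)$, apply the Bochner formula to $\abs{\nabla u}^2$, exploit the refined Kato inequality for $p$-Laplace-type equations together with $\mathrm{Ric}\geq 0$, and substitute the subcritical comparison above. The output is a weighted divergence inequality for the log-gradient quantity $w\coloneqq \abs{\nabla u}^p/u^{\gamma}$, where $\gamma=\gamma(p,\alpha)$ is the homogeneity-balanced exponent dictated by the scaling of the equation. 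The condition $\alpha>p-1$ is required so that $\gamma>0$, which is what makes the Moser iteration close.

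Next I would test the resulting inequality against $w^{q-1}\zeta^{cq}$, where $\zeta$ is a Lipschitz cutoff on the geodesic ball $B_R$ satisfying $\abs{\nabla\zeta}\leq C/R$, integrate by parts on $M$, and iterate in the Moser fashion $q\mapsto \sigma q$ with $\sigma>1$ given by the local Sobolev embedding. Since only the gradient bound $\abs{\nabla\zeta}\leq C/R$ is required (no Laplacian bound on $\zeta$), the polynomial volume growth furnished by the Bishop--Gromov comparison under $\mathrm{Ric}\geq 0$ is sufficient for the local Sobolev inequality to apply. The iteration produces
$$\sup_{B_{R/2}} w \leq C\, R^{-\mu},\qquad \mu=\mu(n,p,\alpha)>0,$$
and letting $R\to\infty$ forces $w\equiv 0$, hence $\nabla u \equiv 0$, so $u$ is constant.

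The main obstacle is the algebraic bookkeeping that pins down the admissible window of Moser exponents. After integration by parts, the cross-terms generated by $\nabla w$ must be absorbable into the positive Kato contribution coming from the refined Kato inequality, and the surviving coercive term must still dominate the error generated by the cutoff. This absorption is controlled by a quadratic form in the iteration exponents whose discriminant is a polynomial in $n$, $p$ and $\alpha$; its positivity is precisely the inequality $\alpha<(n+3)(p-1)/(n-1)$, which is the sharp bound stipulated in the theorem and coincides with the range obtained in \cite{HeWanWei24gradient} for the pure power equation.
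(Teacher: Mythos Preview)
Your proposal is correct and follows essentially the same route as the paper: a Bochner/refined-Kato differential inequality for the log-gradient, the subcritical hypothesis used to replace $uf'(u)$ by $\alpha f(u)$, and Moser iteration against a Lipschitz cutoff with Saloff--Coste's Sobolev inequality and Bishop--Gromov. The paper packages this as the local Cheng--Yau estimate \autoref{thm:boundedness} under the auxiliary condition~\eqref{eq:conditions-f2}; the iteration quantity there is simply $\abs{\nabla\ln u}$ (so your weight is $\gamma=p$, independent of $\alpha$), and the lower bound $\alpha>p-1$ enters not through positivity of a weight but through the sign bookkeeping needed so that~\eqref{eq:conditions-f2} holds with $\delta_0<1$ when $f$ may be negative.
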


The main idea is to prove the Cheng-Yau type gradient estimate. That is, motivated by Wang and Zhang's method \cite{WanZha11local}, using the Moser iteration together with Saloff-Coste's Sobolev inequalities, we prove a gradient estimate under the assumptions mentioned in \autoref{thm:main1}. As a consequence, we obtain the Liouville theorem \autoref{thm:main1}. More specifically, we prove the following Cheng-Yau type local gradient estimate.

\begin{theorem}\label{thm:boundedness}
Let $M^n$ be a complete and $n$-dimensional Riemannian manifold with the Ricci curvature bounded from below by a nonpositive constant $-(n-1)\kappa$. Assume for some constant $\delta_0<1$,
\begin{align}\label{eq:conditions-f2}
\dfrac{p-1}{n-1}\left((n+1)f(t)+2\delta_0\abs{f(t)}\right)-tf'(t)\geq0,\quad\forall t>0.
\end{align}
If $p>1$ and $u$ is a positive and weak solution to \eqref{eq:quasi-h}, then there exists a positive constant $C_{n,p,\delta_0^+}$ depending only on $n,p$ and $\delta_0^+=\max\{0, \delta_0\}\in[0, \,1)$ such that
\begin{align*}
\norm{\nabla\ln u}_{L^{\infty}\left(B_{R/4}\right)}\leq C_{n,p,\delta_0^+}\left(\dfrac{1}{R}+\sqrt{\kappa}\right),\quad\forall R>0.
\end{align*}
\end{theorem}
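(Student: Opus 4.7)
My approach is to derive a weak differential inequality for $G := |\nabla\log u|^2$ and then upgrade it to the claimed $L^\infty$ bound via a Moser iteration against Saloff-Coste's local Sobolev inequality for manifolds with Ricci lower bound. This follows the template of Wang-Zhang \cite{WanZha11local} for $p$-harmonic functions and of the authors' prior paper \cite{HeWanWei24gradient} treating $f(u)=u^\alpha$, now adapted to the general nonlinearity $f$ satisfying \eqref{eq:conditions-f2}.

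On the regular set $\{|\nabla u|>0\}$, where standard $p$-Laplace regularity gives $u\in C^{2,\beta}_{\mathrm{loc}}$, I would apply the linearized $p$-Laplace operator
\begin{align*}
\mathcal{L}\phi := \Div\left(|\nabla u|^{p-2}\nabla\phi + (p-2)|\nabla u|^{p-4}\hin{\nabla u}{\nabla\phi}\nabla u\right)
\end{align*}
to a suitable power of $|\nabla u|$. Combining the classical Bochner identity, the hypothesis $\mathrm{Ric}\geq -(n-1)\kappa$, the commutator relation $\hin{\nabla u}{\nabla\Delta_p u} = -f'(u)|\nabla u|^2$ extracted by differentiating the PDE, and a refined Kato-type inequality that splits $|\nabla^2 u|^2$ into its components tangential and normal to $\nabla u$, one obtains a pointwise inequality whose leading positive term is proportional to $G^{1+\epsilon}$ for some $\epsilon>0$, whose Ricci contribution carries the $\kappa$-dependence, and whose $f$-dependent correction is exactly the bracket in \eqref{eq:conditions-f2} multiplied by a nonnegative prefactor. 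Condition \eqref{eq:conditions-f2} therefore kills the correction term, while $\delta_0<1$ provides the strict positivity of the coefficient of $G^{1+\epsilon}$.

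The Moser iteration step proceeds by testing the resulting weak inequality against $\eta^{2\beta}G^{q}$, with $\eta\in C_0^\infty(B_R)$ a standard cutoff satisfying $\eta\equiv 1$ on $B_{R/4}$ and $|\nabla\eta|\leq C/R$. After absorbing cross terms via Cauchy-Schwarz and applying Saloff-Coste's local Sobolev inequality, whose constants depend only on $n$ and $\kappa R^2$ under the curvature hypothesis, one arrives at a self-improving estimate of the form
\begin{align*}
\|G\|_{L^{\gamma q}(B_r)}\leq \left[Cq^{C}(R-r)^{-2}\bigl((R-r)^{-2}+\kappa\bigr)\right]^{1/q}\|G\|_{L^{q}(B_R)},\quad \gamma=\tfrac{n}{n-2}.
\end{align*}
Iterating along a geometric sequence $q_k = q_0\gamma^k$, starting from an initial $L^{q_0}$ bound furnished by an integrated version of the differential inequality, yields $\|G\|_{L^\infty(B_{R/4})}\leq C_{n,p,\delta_0^+}(R^{-2}+\kappa)$, which is precisely the stated gradient estimate.

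The principal obstacle is the algebraic derivation of the Bochner-type inequality with a strictly positive leading coefficient: the refined Kato inequality must be combined with the commutator identity and the Ricci term, and three families of bad cross terms (those involving $\hin{\nabla u}{\nabla|\nabla u|}$, those involving $f'(u)$, and those from curvature) must all be absorbed via Cauchy-Schwarz, with the slack provided by $\delta_0<1$ being exactly what makes the simultaneous absorption feasible. A secondary but routine technical issue is justifying the pointwise Bochner computation in the weak setting near $\{|\nabla u|=0\}$; this is handled by the standard regularization of $|\nabla u|^{p-2}$ by $(|\nabla u|^2+\varepsilon)^{(p-2)/2}$ followed by a limit argument as in \cite{KotNi09local, WanZha11local}.
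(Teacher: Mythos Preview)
Your proposal is correct and follows essentially the same route as the paper: a pointwise Bochner--Kato differential inequality for $|\nabla\ln u|$, an initial integrated $L^q$ bound obtained by testing that inequality, and a Moser iteration against Saloff--Coste's local Sobolev inequality to reach $L^\infty$. The only notable difference is that the paper performs the entire computation with $w=\ln u$ and the operator $\mathcal{L}_{p,w}$ linearized at $w$ (rather than at $u$ as you wrote), applying it to powers of $|\nabla w|$; this change of variable makes the leading positive term $|\nabla\ln u|^{\lambda+p}$ and the $f$-correction in the exact form of \eqref{eq:conditions-f2} appear directly, whereas working with $|\nabla u|$ and $\mathcal{L}_{p,u}$ would require extra algebraic substitutions to isolate $|\nabla\ln u|$ and to produce the combination $(n+1)f(u)+2\delta_0|f(u)|-\tfrac{n-1}{p-1}uf'(u)$.
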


Moreover, based on this local gradient estimate, we obtain the following global gradient estimate.

\begin{theorem}\label{thm:global-gradient-estimate}
Let $M^n$ be a complete and $n$-dimensional Riemannian manifold with the Ricci curvature bounded from below by a nonpositive constant $-(n-1)\kappa$. Assume $p>1$ and the condition \eqref{eq:conditions-f2} holds. If $u$ is a positive and weak solution to \eqref{eq:quasi-h}, then
\begin{align*}
\norm{\nabla\ln u}_{L^{\infty}\left(M\right)}\leq \dfrac{n-1}{p-1}\sqrt{\dfrac{\kappa}{1-\delta_0^+}}.
\end{align*}
\end{theorem}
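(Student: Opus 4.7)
The plan is to promote the local estimate of \autoref{thm:boundedness} to the sharp global bound through a two-stage argument: first, deduce from the local estimate that $w\coloneqq\abs{\nabla\ln u}^2$ is globally bounded on $M$; then, sharpen the constant via a Bochner-type inequality combined with a maximum-principle (or, equivalently, an $L^k\to L^\infty$) limit. The first stage is immediate: letting $R\to\infty$ in \autoref{thm:boundedness} gives
$$\Lambda\coloneqq\sup_M w\;\le\;C_{n,p,\delta_0^+}^{2}\,\kappa\;<\;\infty,$$
so only the replacement of $C_{n,p,\delta_0^+}^{2}$ by the sharp coefficient $\frac{(n-1)^2}{(p-1)^2(1-\delta_0^+)}$ remains.

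The main tool for the second stage is the pointwise Bochner-type inequality that already underlies the proof of \autoref{thm:boundedness}. Combining the classical Bochner formula
$$\tfrac{1}{2}\Delta\abs{\nabla u}^{2}=\abs{\nabla^{2}u}^{2}+\hin{\nabla\Delta u}{\nabla u}+\mathrm{Ric}(\nabla u,\nabla u),$$
a refined Kato inequality that isolates the component of $\nabla^{2}u$ along $\nabla u$, the Ricci lower bound $\mathrm{Ric}\ge-(n-1)\kappa g$, and the equation $-\Delta_{p}u=f(u)$ (used to trade $\Delta u$ for terms in $f(u)$ and $\abs{\nabla u}^{2}$), one obtains at any point where $\nabla u\neq 0$ a differential inequality of the shape
$$\mathcal{L}w\;\ge\;\tfrac{(p-1)^{2}(1-\delta_{0}^{+})}{(n-1)^{2}}\,w^{2}-\kappa\,w+\text{(terms vanishing when }\nabla w=0\text{)},$$
where $\mathcal{L}$ is the degenerate elliptic operator obtained by linearizing the $p$-Laplacian. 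The coefficient $1-\delta_{0}^{+}$ here appears precisely as the slack left over after cancelling the $tf'(t)$ contribution by means of the subcriticality assumption \eqref{eq:conditions-f2}.

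To finish, since $w$ is a bounded nonnegative function on a complete manifold with Ricci curvature bounded below, the Omori-Yau maximum principle furnishes a sequence $x_{k}\in M$ with $w(x_{k})\to\Lambda$, $\abs{\nabla w}(x_{k})\to 0$, and $\limsup_{k}\mathcal{L}w(x_{k})\le 0$. Substituting into the pointwise inequality above and passing to the limit yields
$$0\;\ge\;\tfrac{(p-1)^{2}(1-\delta_{0}^{+})}{(n-1)^{2}}\Lambda^{2}-\kappa\Lambda,$$
which rearranges to the claimed bound. More in keeping with the Moser-style technique of \autoref{thm:boundedness}, one may alternatively test the Bochner inequality against $\eta^{2}w^{k}$ for a Laplacian cutoff $\eta$ on $B_{R}$, invoke Bishop-Gromov volume control to discard the boundary contribution as $R\to\infty$, and then send $k\to\infty$ to capture the supremum. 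The principal obstacle I foresee is not the limit passage but the bookkeeping inside the Bochner step: producing the factor $1-\delta_{0}^{+}$ with no loss forces the refined Kato inequality to be balanced exactly against the split in \eqref{eq:conditions-f2}, and the degenerate locus $\set{\nabla u=0}$ for $p\ne 2$ must be dispatched by the standard regularization (replacing $\abs{\nabla u}^{p-2}$ by $(\epsilon+\abs{\nabla u}^{2})^{(p-2)/2}$ and passing to the limit).
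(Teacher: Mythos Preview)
Your two-stage plan matches the paper's: first invoke \autoref{thm:boundedness} to get $H=\abs{\nabla\ln u}$ globally bounded, then use the Bochner-type inequality \autoref{lem:moser} to extract the sharp constant. The difficulty is entirely in the second stage, and here your proposal has a genuine gap. The Omori--Yau maximum principle under a Ricci lower bound gives a sequence with $\limsup_k\Delta\psi(x_k)\le 0$; it does \emph{not} control the linearized $p$-Laplacian $\mathcal{L}_{p,\ln u}\psi=\Div\bigl(H^{p-2}A_{\ln u}(\nabla\psi)\bigr)$, whose principal part is $H^{p-2}\bigl(\Delta\psi+(p-2)\abs{\nabla\ln u}^{-2}\nabla^2\psi(\nabla\ln u,\nabla\ln u)\bigr)$. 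Controlling the directional second derivative requires the Hessian version of Omori--Yau, which needs a sectional (not merely Ricci) curvature lower bound. Your fallback, testing against $\eta^2 w^k$ and letting $R\to\infty$, founders on exponential volume growth: Bishop--Gromov only gives $\mathrm{Vol}(B_R)\le e^{C_n(1+\sqrt{\kappa}R)}$, so the boundary contribution $\int w^{k}\abs{\nabla\eta}^2\lesssim\Lambda^{k}R^{-2}\mathrm{Vol}(B_R)$ need not vanish.

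The paper closes this gap by the Sung--Wang iteration. Fix $a>\Lambda_0\coloneqq\frac{n-1}{p-1}\sqrt{\kappa/(1-\delta_0^+)}$ and set $\psi_a=(H^{\tau_0\lambda_0}-a^{\tau_0\lambda_0})^+$. On $\{H\ge a\}$ the Bochner inequality yields $\mathcal{L}_{p,\ln u}H^{\tau\lambda_0}\ge c\tau H^{p-2}H^{\tau\lambda_0}$ for large $\tau$; testing with $\eta^2\psi_a^{2t-1}$ produces $\int_MH^{p-2}\psi_a^{2t}\eta^2\le (C/t)\int_MH^{p-2}\psi_a^{2t}\abs{\nabla\eta}^2$. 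The crucial point is that the factor $C/t$ can be made smaller than the per-unit volume growth $e^{C_n\sqrt{\kappa}}$ by choosing $t$ large. Iterating over cutoffs supported on annuli $B_{i+1}\setminus B_i$ and sending the outer radius to infinity then forces $\psi_a\equiv 0$, i.e., $\sup_MH\le a$; letting $a\downarrow\Lambda_0$ finishes. This integral substitute for the maximum principle is precisely what is needed when only Ricci is bounded below and the operator is not the Laplacian.
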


As an immediately consequence of \autoref{thm:main1}, we have
\begin{cor}\label{main:cor-1}
Let $M^n$ be a complete and $n$-dimensional Riemannian manifold with nonnegative Ricci curvature. If $p>1$ and one of the following conditions holds
\begin{enumerate}[(a)]
\item $f$ is nonnegative and \eqref{eq:conditions-f} holds for some $\alpha<\frac{(n+3)(p-1)}{n-1}$,
or
\item $f$ is nonpositive and \eqref{eq:conditions-f} holds for some $\alpha>p-1$,
\end{enumerate}
then every positive and weak solution to \eqref{eq:quasi-h} is constant.
\end{cor}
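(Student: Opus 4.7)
The plan is to deduce the corollary directly from \autoref{thm:main1} by exploiting the fact that the subcritical condition \eqref{eq:conditions-f} is monotone in the exponent $\alpha$, with the direction of monotonicity determined by the sign of $f$. The interval $\left(p-1,\,\frac{(n+3)(p-1)}{n-1}\right)$ appearing in \autoref{thm:main1} is nonempty since $p>1$ yields $(n+3)(p-1)>(n-1)(p-1)$, so there is room to pick a replacement exponent.

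Concretely, the key identity is
\begin{align*}
\tilde\alpha f(t)-tf'(t)=\bigl(\alpha f(t)-tf'(t)\bigr)+(\tilde\alpha-\alpha)f(t),
\end{align*}
so the sign of the new subcritical defect is controlled by the sign of $(\tilde\alpha-\alpha)f(t)$. In case (a), where $f\geq 0$, any $\tilde\alpha\geq \alpha$ preserves \eqref{eq:conditions-f}; in case (b), where $f\leq 0$, any $\tilde\alpha\leq \alpha$ preserves it.

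For case (a), given $\alpha<\frac{(n+3)(p-1)}{n-1}$, I would choose
\begin{align*}
\tilde\alpha\in\Bigl(\max\{\alpha,\,p-1\},\;\tfrac{(n+3)(p-1)}{n-1}\Bigr),
\end{align*}
which is nonempty by the above observation. Then $\tilde\alpha\geq\alpha$ and $f\geq 0$ imply $f$ is subcritical with exponent $\tilde\alpha$, while $\tilde\alpha\in\left(p-1,\frac{(n+3)(p-1)}{n-1}\right)$. Applying \autoref{thm:main1} with this $\tilde\alpha$ gives that every positive weak solution is constant. For case (b), given $\alpha>p-1$, I would symmetrically choose
\begin{align*}
\tilde\alpha\in\Bigl(p-1,\;\min\bigl\{\alpha,\,\tfrac{(n+3)(p-1)}{n-1}\bigr\}\Bigr),
\end{align*}
which is nonempty since $\alpha>p-1$. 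Then $\tilde\alpha\leq\alpha$ together with $f\leq 0$ yields subcriticality with exponent $\tilde\alpha$, and \autoref{thm:main1} again concludes the argument.

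There is no genuine obstacle: the corollary is essentially a repackaging of \autoref{thm:main1} that trades the constraint $\alpha\in\bigl(p-1,\frac{(n+3)(p-1)}{n-1}\bigr)$ for a one-sided constraint plus a sign assumption on $f$. The only small point to verify is the emptiness/nonemptiness of the intervals for $\tilde\alpha$, which is immediate as noted above.
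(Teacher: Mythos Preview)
Your proof is correct and is precisely the argument the paper has in mind: the corollary is stated as an ``immediate consequence'' of \autoref{thm:main1}, and the way to see this is exactly the monotonicity of the subcritical condition in $\alpha$ that you exploit. Your verification that the intervals for $\tilde\alpha$ are nonempty is the only detail needed, and you handle it cleanly.
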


For the remaining cases
$$\alpha\geq\frac{(n+3)(p-1)}{n-1},$$
it seems that the previous method does not work. To handle this case, we follow the approach by Gidas and Spruck \cite{GidSpr81global} for $p=2$ and Serrin and Zou \cite{SerZou02cauchy} for general $p>1$ via the method of vector fields motivated by Obata \cite{Oba71conjecture}. This trick introduced by Obata \cite{Oba71conjecture} has many applications, see \cite{LinMa24,MaWu23liouville,MaWuZha23liouville} and references therein.
\medskip

We obtain the following

\begin{theorem}\label{thm:main2}
Let $M^n$ be a complete Riemannian manifold with dimension $n$ and nonnegative Ricci curvature. Assume $p>1$ and \eqref{eq:conditions-f} holds for some $\alpha\in(0,\,p_s)$. If we assume additionally that $f$ is positive and one of the following conditions holds:
\begin{enumerate}[(A)]
\item either $p\geq n$, or
\item either $3\leq n<2p<2n$, or
\item   $1<p<n=2$ and
\begin{align*}
\alpha<\dfrac{2(p-1)^2(3p-2)}{(4-3p)^+(2-p)},
\end{align*}
\end{enumerate}
then every positive and weak solution to \eqref{eq:quasi-h} is constant.
\end{theorem}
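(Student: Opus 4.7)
The plan is to adapt the Obata/vector-field strategy used by Gidas--Spruck \cite{GidSpr81global} (for $p=2$) and Serrin--Zou \cite{SerZou02cauchy} (for general $p>1$) to the Riemannian setting, making essential use of the Bochner formula for the $p$-Laplacian together with the nonnegativity of the Ricci curvature. Since \autoref{thm:main1} already covers $\alpha<\frac{(n+3)(p-1)}{n-1}$, it suffices to treat the remaining range $\frac{(n+3)(p-1)}{n-1}\leq\alpha<p_s$, which is nonempty exactly in the three regimes (A), (B), (C).

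Fix a positive weak solution $u$ and introduce the auxiliary function $v=u^{\sigma}$ for an exponent $\sigma\in\mathbb{R}$ to be determined; a direct calculation converts \eqref{eq:quasi-h} into a quasilinear equation for $v$ of the form $-\Delta_p v = F(v,\abs{\nabla v})$, in which both the principal nonlinearity and the Bernoulli-type gradient term factor through $f$ and $f'$. Because of the degeneracy of $\Delta_p$ on $\{\nabla v=0\}$, the standard regularization that replaces $\abs{\nabla v}^p$ by $(\abs{\nabla v}^2+\varepsilon)^{p/2}$, combined with the $C^{1,\beta}$-regularity theory of Tolksdorf and DiBenedetto, is what justifies every pointwise identity below, and one passes to the limit $\varepsilon\to 0$ at the end.

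Next, define an Obata-type vector field
\begin{align*}
X = \phi_1(v)\,\abs{\nabla v}^{p-2}\nabla v + \phi_2(v)\,\abs{\nabla v}^{p-4}\nabla_{\nabla v}\nabla v,
\end{align*}
and compute $\Div X$ by means of the $p$-Bochner formula; the curvature contribution $\abs{\nabla v}^{p-2}\mathrm{Ric}(\nabla v,\nabla v)$ is automatically nonnegative. Substituting the PDE for $\Delta_p v$, using the subcriticality hypothesis \eqref{eq:conditions-f} to control $f'(u)u$, and invoking a refined Kato inequality of the form $\abs{\nabla^2 v}^2\geq\tfrac{n}{n-1}\bigl|\nabla\abs{\nabla v}\bigr|^2$, the goal is to choose $\sigma,\phi_1,\phi_2$ so that
\begin{align*}
\Div X \geq C_1\, v^{a_1}\abs{\nabla v}^{b_1} + C_2\, v^{a_2}\abs{\nabla v}^{b_2},\qquad C_1,C_2\geq 0,
\end{align*}
with explicit exponents $a_i,b_i$ depending only on $(n,p,\alpha,\sigma)$.

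Finally, integrate against $\eta^s$ where $\eta\in C_c^\infty(B_{2R})$, $\eta\equiv 1$ on $B_R$, $\abs{\nabla\eta}\leq C/R$, and $s$ is large. The divergence theorem gives
\begin{align*}
\int_M \eta^s\,\Div X = -s\int_M \eta^{s-1}\hin{\nabla\eta}{X},
\end{align*}
Young's inequality absorbs a small fraction of the nonnegative integrand on the left-hand side, and the remaining error is $O(R^{-s'}\mathrm{Vol}(B_{2R})) = O(R^{n-s'})$ by Bishop--Gromov; for $s$ large and $\alpha<p_s$ this vanishes as $R\to\infty$, forcing $\abs{\nabla u}\equiv 0$. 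Then $u$ is a positive constant, but then $0=-\Delta_p u = f(u) > 0$, contradiction. The principal obstacle is the preceding algebraic step: the system of polynomial inequalities in $(n,p,\alpha)$ expressing simultaneous nonnegativity of $C_1$ and $C_2$ is feasible on exactly the union of (A), (B), (C). In particular, the refined Kato inequality degenerates when $n=2$, which is why case (C) requires the extra explicit upper bound on $\alpha$, while $p\geq n$ in case (A) tames the cross terms automatically and any subcritical $\alpha$ is admissible.
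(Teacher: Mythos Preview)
Your overall strategy—an Obata/Bochner vector-field identity, integration against a distance cutoff, and letting $R\to\infty$—is indeed what the paper does. But the step ``the remaining error is $O(R^{-s'}\mathrm{Vol}(B_{2R}))=O(R^{n-s'})$, which vanishes for $s$ large'' is where the argument breaks. After the divergence theorem and Young's inequality, the boundary term is not a pure volume integral: it is of the form
\[
\int_{B_{2R}\setminus B_R} u^{\,2(p-1)-t}\,\abs{\nabla\eta}^{2p}\eta^{\gamma-2p},\qquad t=\dfrac{((n+1)p-2n)\alpha}{(n+1)p-n},
\]
with the exponent $2(p-1)-t$ fixed by the algebra (see \autoref{lem:crucial-0}). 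There is no a priori bound on $u$, so one cannot estimate this by $R^{-2p}\mathrm{Vol}(B_{2R})$, and taking $s$ (or $\gamma$) large does nothing to the $u$-power. This is precisely the difficulty that distinguishes the manifold case from $\mathbb{R}^n$, where Serrin--Zou could use Hessian cutoffs.

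The paper closes this gap with an ingredient your proposal omits entirely: the weak Harnack inequality for $p$-superharmonic functions (\autoref{lem:weak-hanack}). Since $f>0$, $u$ is $p$-superharmonic, and one has $\inf_{B_R}u\geq C R^{-n/q}\norm{u}_{L^q(B_{2R})}$ for all $0<q<(p-1)\chi_{n,p}$. This lets one trade $\int u^{\beta}$ against $(\inf u)^{\beta}$ and produce a clean power of $R$ (\autoref{thm:integral-estimate1}), yielding $\int_{B_R} f(u)^q u^{(1-p)q}\leq CR^{n-pq}$ for $q$ in an interval depending on $\alpha$. The conditions (A), (B), (C) are exactly the cases in which one can choose $q$ simultaneously in that interval and with $pq>n$; they are \emph{not} driven by any degeneration of the Kato inequality at $n=2$, as you suggest. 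Without the weak Harnack step your argument does not close.
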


The main idea is to make use of the divergence theorem to prove an integral growth estimate. In order to obtain the Liouville theorem on noncompact Riemannian manifold, we used the same idea as above. That is, we will use the divergence theorem together with the crucial pointwise differential identity \eqref{eq:crucial-0}. Since the manifold $M$ is noncompact, the divergence theorem holds only for  tangent vector fields which are supported in some compacted subsets. Thus, we will use the distance cutoff function.
Notice that Serrin and Zou \cite{SerZou02cauchy} used the crucial estimates of the Hessian  cutoff function  $\zeta$ which satisfies
\begin{align*}
\zeta\in C_0^{\infty}\left(B_{R}\right),\quad 0\leq \zeta\leq 1,\quad \zeta\vert_{B_{R/2}}=1,\quad \abs{\nabla\zeta}\leq\dfrac{C}{R},\quad\abs{\nabla^2\zeta}\leq\dfrac{C}{R^2}
\end{align*}
in the Euclidean space. However, it is recognized that the lower bound of the Ricci curvature generally does not constrain the Hessian of the distance function, in other words, such a cutoff function $\zeta$ appears non existent for any $R>0$  in general Riemannian manifolds characterized by nonnegative Ricci curvature.  So we can not use the method of Gidas and Spruck \cite{GidSpr81global} or Serrin and Zou \cite{SerZou02cauchy} directly. Fortunately, to obtain the Liouville theorem, one does not need any information of the Hessian of the distance function. However, one needs the information of the lower bound of the Ricci curvature to control the growth of volume. In the semilinear case where $p=2$, Ciraolo, Farina, and Polva \cite[Theorem 1.4]{CirFarPol24classification} effectively employed the distance cutoff function to provide an alternative proof of the findings by Gidas and Spruck \cite[Theorem 1.2]{GidSpr81global}.

If we assume additionally that $f$ satisfies some polynomial growth conditions, then we can prove

\begin{theorem}\label{thm:main3}
Let $M^n$ be a complete Riemannian manifold with dimension $n$ and nonnegative Ricci curvature. Assume $p>1$ and \eqref{eq:conditions-f} holds for some $0<\alpha<p_s$. If we assume additionally that for some constant $p<p_0$ such that
\begin{align}\label{eq:condition-f1}
\liminf_{t\to+\infty}t^{1-p_0}\abs{f(t)}>0,
\end{align}
and one of the following conditions holds:
\begin{enumerate}[(i)]
\item either $f$ is nonnegative, or
\item $p>\frac{n}{2}$ and
\begin{align*}
\alpha\leq\dfrac{2((n+1)p-n)(p-1)}{\left((n+1)p-2n\right)^+},
\end{align*}
\end{enumerate}
then every positive and weak solution  to \eqref{eq:quasi-h} is constant.
\end{theorem}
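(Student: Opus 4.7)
The plan is to extend the vector-field / divergence-theorem strategy used in the sketch of \autoref{thm:main2} to the regimes it leaves open, using the polynomial lower bound \eqref{eq:condition-f1} on $\abs{f}$ to close the boundary errors produced by the distance cutoff. Following the Obata--Serrin--Zou machinery, I would assemble an auxiliary vector field $X = X(u,\nabla u)$ built from $\abs{\nabla u}^{p-2}\nabla u$ and suitable powers of $u$, and reuse the pointwise differential identity alluded to in the sketch preceding \autoref{thm:main2}. Under the subcritical hypothesis \eqref{eq:conditions-f} and with nonnegative Ricci curvature, this identity gives an inequality of the form
\begin{align*}
\Div X \geq a\abs{\nabla u}^{2p}u^{-\beta_1} + b\, u f(u) \abs{\nabla u}^{p-2} u^{-\beta_2} + \text{(error)},
\end{align*}
with $a,b>0$, under constraints linking the free exponents $\beta_1,\beta_2$ to $p$, $n$, and $\alpha$.

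Next I would integrate against $\zeta^{q}$, where $\zeta$ is a radial distance cutoff with $\zeta\equiv 1$ on $B_R$, supported in $B_{2R}$, and $\abs{\nabla\zeta}\leq C/R$. Only the gradient bound on $\zeta$ is required, which is essential since the Ricci lower bound does not control the Hessian of the distance function on a general manifold. After integration by parts, H\"older's inequality, and Bishop--Gromov volume comparison $\mathrm{Vol}(B_R)\leq CR^n$, this produces a schematic estimate of the form
\begin{align*}
\int_{B_R}\abs{\nabla u}^{2p}u^{-\beta_1} + \int_{B_R} u f(u) \abs{\nabla u}^{p-2} u^{-\beta_2} \leq \dfrac{C}{R^{\tau}} \int_{B_{2R}\setminus B_R} u^{\gamma},
\end{align*}
for a positive exponent $\tau$ depending on the choice of parameters.

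The polynomial growth hypothesis \eqref{eq:condition-f1} is then used to close the loop. Since $u f(u) \geq c\, u^{p_0}$ on $\set{u\geq M_0}$, the left-hand side dominates $c\int_{B_R} u^{p_0+p-2}\abs{\nabla u}^{p-2}$ modulo a $C\mathrm{Vol}(B_{2R})$ error, and the right-hand side can be absorbed via H\"older with exponent $p_0/\gamma>1$, since $p_0>p$ together with the hypothesis of case (i) or (ii) yields $\gamma<p_0$. The resulting self-improving inequality, combined with the polynomial volume growth, forces all the positive integrands above to vanish in the limit $R\to\infty$, so that $\abs{\nabla u}\equiv 0$ and $u$ is constant.

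The main obstacle will be case (ii), in which $f$ is only subcritical and may change sign. Then $X$ cannot be arranged so that every coefficient on the right-hand side of the pointwise identity is automatically nonnegative, and the parameters $\beta_1,\beta_2,q,\gamma$ must be optimized simultaneously so that $a,b>0$ while also preserving the exponent balance $n(p_0-\gamma)<\tau p_0$ required for absorption. The displayed threshold
\begin{align*}
\alpha \leq \dfrac{2((n+1)p-n)(p-1)}{\left((n+1)p-2n\right)^{+}},
\end{align*}
together with $p>n/2$, is exactly what makes this optimization feasible throughout $0<\alpha<p_s$; it is the sharp value at which the bookkeeping closes. Carrying out this parameter selection, rather than the integration by parts or volume-growth steps, is where the technical work lies.
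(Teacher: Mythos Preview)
Your overall strategy---test the divergence identity against a distance cutoff, use volume comparison, and close with the growth bound \eqref{eq:condition-f1}---is the paper's strategy. But the schematic you wrote will not close as stated, because the $f$-term you place on the left carries a gradient weight $\abs{\nabla u}^{p-2}$, while the boundary error on the right is a pure power $u^\gamma$. H\"older cannot absorb $\int u^\gamma$ into $\int u^{p_0+p-2}\abs{\nabla u}^{p-2}$ unless $p=2$. In the paper the crucial left-hand quantity is $\int f(u)^2 u^{-t}$ with \emph{no} gradient weight (here $t=\tfrac{((n+1)p-2n)\alpha}{(n+1)p-n}$), and it does not come from the Bochner--Obata identity at all: the identity only produces the $\abs{\nabla u}^{2p}$ term and the nonnegative $(\alpha f-uf')\abs{\nabla u}^p$ term, which is dropped. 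The $f(u)^2$ control is obtained separately by testing the equation $-\Delta_p u=f(u)$ against $f(u)u^{-t}\eta^\gamma$. The resulting \autoref{lem:crucial-0} reads
\[
\int f(u)^2 u^{-t}\eta^\gamma + \int u^{-t-2}\abs{\nabla u}^{2p}\eta^\gamma \;\le\; C\int u^{2(p-1)-t}\abs{\nabla\eta}^{2p}\eta^{\gamma-2p},
\]
and the absorption runs by splitting the right side over $\{u<u_0\}$ and $\{u\ge u_0\}$: on the large set, $u^{2(p_0-1)-t}\le C f(u)^2 u^{-t}$ by \eqref{eq:condition-f1}, so H\"older with exponent $\tfrac{2(p_0-1)-t}{2(p-1)-t}$ followed by Young feeds back into the left.

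Two further points your plan does not anticipate. First, the displayed threshold in (ii) is not an optimization artifact: it is exactly the condition $t\le 2(p-1)$. This is what makes the exponent $2(p-1)-t$ nonnegative, so that on $\{u<u_0\}$ the right side is bounded by $u_0^{2(p-1)-t}\int\abs{\nabla\eta}^{2p}\sim R^{n-2p}$; hence the need for $p>n/2$ when $f$ may change sign. Second, in case (i) with $t>2(p-1)$ the growth hypothesis \eqref{eq:condition-f1} is \emph{not} used: the paper checks that this parameter range forces $p\ge\max\{\tfrac{n+1}{2},\tfrac{2n}{n+1}\}$ and appeals directly to \autoref{thm:main2} (which rests on the weak Harnack inequality, not on \eqref{eq:condition-f1}). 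So the case split is by the sign of $2(p-1)-t$, not by the sign of $f$, and the two halves use genuinely different tools.
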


As an immediately consequence, we have the following generalization of Serrin and Zou \cite[Theorem II (b, c)] {SerZou02cauchy} when $M^n=\mathbb{R}^n$ and $1<p<n$.

\begin{theorem}\label{thm:main4}
Let $M^n$ be a complete and noncompact Riemannian manifold with dimension $n$ and nonnegative Ricci curvature. Assume $f$ is subcrtical with exponent $\alpha\in\left(0,\, p_s\right)$. If we assume additionally that $f$ is nonnegative and one of the following conditions holds:
\begin{enumerate}[(I)]
\item either $p\geq n$, or
\item $3\leq n<2p$, or
\item $n=2$ and $p\geq\frac{1+\sqrt{17}}{4}$, or
\item for some constant $p_0>p$ such that
\begin{align*}
\liminf_{t\to+\infty}t^{1-p_0}f(t)>0,
\end{align*}
\end{enumerate}
then  every positive and weak solution $u$ to \eqref{eq:quasi-h} is constant.
\end{theorem}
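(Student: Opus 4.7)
The plan is to deduce \autoref{thm:main4} case-by-case from the already-established \autoref{thm:main2} and \autoref{thm:main3}, whose analytic machinery will be invoked verbatim. Under hypothesis (IV), $f$ is nonnegative and has a polynomial lower bound at infinity, so part (i) of \autoref{thm:main3} applies directly. Hypothesis (I), $p \geq n$, is exactly part (A) of \autoref{thm:main2}. Under hypothesis (II) we have $3 \leq n < 2p$: if $p \geq n$ we reduce to (I); otherwise $p < n$, so $n < 2p$ combined with $p < n$ yields $n < 2p < 2n$, matching part (B) of \autoref{thm:main2}. Under (III), $n=2$; if $p \geq 2$ we reduce again to (I), otherwise $1 < p < 2$ and we aim to invoke part (C) of \autoref{thm:main2}.

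The one nontrivial step is checking that the hypothesis $\alpha < p_s = \frac{3p-2}{2-p}$ from (III) is already strong enough to guarantee
\begin{align*}
\alpha \;<\; \frac{2(p-1)^2(3p-2)}{(4-3p)^+(2-p)},
\end{align*}
which is the condition required in part (C). For $p \geq 4/3$ the right-hand side is $+\infty$ and the bound is automatic. For $\frac{1+\sqrt{17}}{4} \leq p < 4/3$, the factors $3p-2$ and $2-p$ are positive and can be cancelled, so the desired inequality $p_s \leq \frac{2(p-1)^2(3p-2)}{(4-3p)(2-p)}$ reduces to $2(p-1)^2 \geq 4-3p$, i.e.\ $2p^2 - p - 2 \geq 0$, and the positive root of this quadratic is precisely $p = \frac{1+\sqrt{17}}{4}$. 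Hence the bound holds throughout the range in (III), and part (C) delivers the conclusion.

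The main obstacle I foresee is the mismatch between ``$f$ positive'' in \autoref{thm:main2} and ``$f$ nonnegative'' in our hypothesis. If $f \equiv 0$ then $u$ is a positive $p$-harmonic function on a complete manifold with nonnegative Ricci curvature, and hence constant by the Wang--Zhang Liouville theorem \cite{WanZha11local}. If instead $f \not\equiv 0$, the subcriticality condition \eqref{eq:conditions-f} together with $f \geq 0$ forces $t^{-\alpha} f(t)$ to be monotonically nonincreasing, so the zero set of $f$ is either empty or a right half-line $[s_0,\infty)$. To reduce to the strictly positive case without reopening the proof of \autoref{thm:main2}, one may perturb $f$ to $f_\varepsilon(t) := f(t) + \varepsilon t^{\alpha}$, which is strictly positive and still satisfies \eqref{eq:conditions-f} with the same exponent $\alpha$; applying \autoref{thm:main2} to the perturbed equation and passing to the limit $\varepsilon \to 0^+$ in the standard weak sense yields the Liouville conclusion for the original equation.
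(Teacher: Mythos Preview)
Your core argument---reducing each of (I)--(IV) to \autoref{thm:main2} or \autoref{thm:main3} and verifying the quadratic inequality $2p^2-p-2\geq0$ for case (III)---is correct and is exactly the paper's own proof, which is even terser: it treats (I), (II), (IV) as immediate and only writes out the algebraic check for (III).

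Your perturbation argument for the ``$f$ positive'' vs.\ ``$f$ nonnegative'' discrepancy, however, is a genuine gap. The given solution $u$ satisfies $-\Delta_p u=f(u)$, not $-\Delta_p u=f_\varepsilon(u)$; applying \autoref{thm:main2} to the perturbed nonlinearity $f_\varepsilon$ tells you only that solutions of the \emph{perturbed} equation are constant, which says nothing about $u$. There is no limit to take: Liouville theorems are statements about a fixed equation, and you have not produced any family $u_\varepsilon$ of solutions converging to $u$.

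The correct resolution is simpler and requires no perturbation. Inspecting the proof of \autoref{thm:main2}, the only place the sign of $f$ enters is through \autoref{thm:integral-estimate1}, whose hypothesis is merely that $f$ be \emph{nonnegative}; the conclusion ``$f(u)\equiv0$, hence $u$ is $p$-harmonic, hence constant'' likewise needs only $f\geq0$. The strict positivity stated in \autoref{thm:main2} is thus stronger than what its proof actually uses, and the paper's one-line ``\autoref{thm:main2} is applicable'' implicitly relies on this. You may either note this directly, or, as you already did for $f\equiv0$, invoke the Wang--Zhang Liouville theorem for the $p$-harmonic endpoint.
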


\begin{proof}
It suffices to consider the case (\Rmn{3}). In this case, we may have $\frac{1+\sqrt{17}}{4}\leq p<\frac{4}{3}$ and get
\begin{align*}
\dfrac{2(p-1)^2(3p-2)}{(4-3p)^+(2-p)}=\dfrac{2(p-1)^2(3p-2)}{(4-3p)(2-p)}\geq\dfrac{3p-2}{2-p}>\alpha.
\end{align*}
Then \autoref{thm:main2} is applicable.
\end{proof}

Besides, as a by-product we also consider the equation as follows
\begin{align*}
\Delta u-\lambda u+u^q=0,
\end{align*}
where $q>1$ and $\lambda>0$,
and provide a new and simple proof of Bidaut-Veron and Veron's Liouville result \cite[Theorem 6.1]{BidMarVer91nonlinear} on a compact manifold (see \autoref{thm:BV}).

Our paper is organized as follows: In Section 3, we establish some formulas and inequalities on some tangent vector fields, which will be used later, and then establish some important lemmas on $u$ to \eqref{eq:quasi-h}, which will play a key role in the discussion on Liouville properties. In Section 4, we prove some important $\log$-gradient estimates for positive solutions. In Section 5, we discuss the Liouville properties of the positive solutions to the Lane-Emdem-Fowler and give some necessary proofs of the related results, which is the main body of this paper.

\section{Basic formulas and inequalities}
Given a positive and $C^{\infty}$-function $u$ and constant $b$,  we consider the following vector fields
\begin{align*}
\mathbf{U}=\abs{\nabla u}^{p-2}\nabla u,\quad \mathbf{X}=u^{b}\mathbf{U}.
\end{align*}
which are defined in $\{\nabla u\neq 0\}$.
By definition
\begin{align*}
\Delta_pu=\Div\mathbf{U}.
\end{align*}
Introduce the operator $A_u$ by
\begin{align*}
A_{u}(X)=X+(p-2)\abs{\nabla u}^{-2}\hin{X}{\nabla u}\nabla u,\quad\forall X\in TM.
\end{align*}
We have
\begin{align}\label{eq:ww}
\nabla_{\mathbf{U}}\mathbf{U}=\dfrac1p\abs{\nabla u}^{p-2}A_{u}\left(\nabla\abs{\nabla u}^p\right).
\end{align}
This is a direct computation. In fact, it is easy to verify
$$\nabla_{\mathbf{U}}\nabla u=\abs{\nabla u}^{p-2}\nabla _{\nabla u}\nabla u = \frac{1}{2}|\nabla u|^{p-2}\nabla \abs{\nabla u}^2=\frac{1}{p}\nabla \abs{\nabla u}^p.$$
Hence, we have
\begin{align*}
\nabla_{\mathbf{U}}\mathbf{U}=&\abs{\nabla u}^{p-2}\nabla_{\mathbf{U}}\nabla u+(p-2)\abs{\nabla u}^{p-4}\hin{\nabla_{\mathbf{U}}\nabla u}{\nabla u}\nabla u\\
=&\abs{\nabla u}^{p-2}A_{u}\left(\nabla_{\mathbf{U}}\nabla u\right)\\
=&\dfrac1p\abs{\nabla u}^{p-2}A_{u}\left(\nabla\abs{\nabla u}^p\right).
\end{align*}
Recall the linearization of the $p$-Laplacian at $u$ which is defined by
\begin{align*}
\mathcal{L}_{p,u}\psi=\Div\left(\abs{\nabla u}^{p-2}A_{u}\left(\nabla\psi\right)\right),\quad\forall\psi\in C^{\infty}\left(M\right).
\end{align*}
Then, \eqref{eq:ww} yields
\begin{align*}
\dfrac{1}{p}\mathcal{L}_{p,u}\abs{\nabla u}^p=\Div(\nabla_{\mathbf{U}}\mathbf{U}).
\end{align*}

We need the following Bochner formula.

\begin{lem}For every tangent vector field $X\in\Gamma\left(TM\right)$, we denote by $\mathcal X=\nabla X\in\mathrm{End}(TM)$, we have the following Bochner formula
\begin{align}\label{eq:bochner-X}
\Div\left(\nabla_{X}X\right)=&\tr\left(\mathcal X^2\right)+\hin{\nabla\Div X}{X}+\mathrm{Ric}\left(X,X\right),
\end{align}
where in a local orthonormal frames $\{e_i, \ldots, e_n\}$ of $TM$,
\begin{align*}
\tr\left(\mathcal X^2\right)=\tr(\mathcal X\circ \mathcal X)=\langle \mathcal X\circ \mathcal X(e_i), e_i\rangle=\langle  \mathcal X(\nabla_{e_i}X), e_i\rangle =\langle  \nabla_{\nabla_{e_i}X}X, e_i\rangle,\quad \mathrm{div}X = \langle\nabla_{e_i}X, e_i\rangle.
\end{align*}
In particular, for every positive and smooth function $\phi$, we have the following Bochner formula
\begin{align}\label{eq:bochner-p}
\dfrac1p\mathcal{L}_{p,\phi}\abs{\nabla \phi}^p= &\abs{\nabla\phi}^{2p-4}\hin{A_{\phi}\left(\nabla_{e_i}\nabla\phi\right)}{e_j} \hin{A_{\phi}\left(\nabla_{e_j}\nabla\phi\right)}{e_i}+\hin{\nabla\Delta_p\phi}{\abs{\nabla\phi}^{p-2}\nabla\phi}\nonumber\\ &+\mathrm{Ric}\left(\abs{\nabla\phi}^{p-2}\nabla\phi,\abs{\nabla\phi}^{p-2}\nabla\phi\right),
\end{align}
provided $\nabla\phi\neq0$.
\end{lem}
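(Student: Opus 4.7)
The plan is to establish the general identity \eqref{eq:bochner-X} by a direct coordinate computation, and then to obtain \eqref{eq:bochner-p} by specializing to $X=\mathbf{U}$.

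For \eqref{eq:bochner-X}, I would fix a point $q\in M$ and work in a local orthonormal frame $\{e_i\}$ that is normal at $q$, so that $\nabla_{e_i}e_j|_q=0$. Writing $X^i=\hin{X}{e_i}$ and $\mathcal X^i{}_j=\hin{\nabla_{e_j}X}{e_i}$, at $q$ one has $\nabla_X X = X^j\nabla_j X^i\, e_i$, and therefore $\Div(\nabla_X X)=\nabla_i(X^j\nabla_j X^i)=(\nabla_i X^j)(\nabla_j X^i)+X^j\nabla_i\nabla_j X^i$. The first summand is exactly $\tr(\mathcal X^2)$, since it equals $\mathcal X^j{}_i\mathcal X^i{}_j$ in the chosen frame. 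For the second, the Ricci identity $\nabla_i\nabla_j X^i-\nabla_j\nabla_i X^i = R_{jk}X^k$ converts it to $\hin{X}{\nabla\Div X}+\mathrm{Ric}(X,X)$. Summing gives \eqref{eq:bochner-X}. The only place where curvature enters is the commutator of covariant derivatives, so the only point requiring care here is checking sign conventions for the Ricci tensor.

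To derive \eqref{eq:bochner-p}, apply \eqref{eq:bochner-X} to $X=\mathbf{U}=\abs{\nabla\phi}^{p-2}\nabla\phi$, which makes sense pointwise on $\{\nabla\phi\neq 0\}$. By definition of $\Delta_p$, $\Div\mathbf{U}=\Delta_p\phi$; by the identity \eqref{eq:ww} already established above the lemma, $\nabla_\mathbf{U}\mathbf{U}=\tfrac{1}{p}\abs{\nabla\phi}^{p-2}A_\phi\!\left(\nabla\abs{\nabla\phi}^p\right)$, whose divergence is, by the very definition of $\mathcal{L}_{p,\phi}$, equal to $\tfrac{1}{p}\mathcal{L}_{p,\phi}\abs{\nabla\phi}^p$. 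Therefore the left-hand side of \eqref{eq:bochner-X} is the left-hand side of \eqref{eq:bochner-p}; the middle term becomes $\hin{\nabla\Delta_p\phi}{\abs{\nabla\phi}^{p-2}\nabla\phi}$, and the Ricci term takes its announced form.

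The remaining task is to identify $\tr(\mathcal U^2)$ with the prescribed quadratic expression in $A_\phi$. A straightforward application of the product rule gives $\nabla_{e_i}\mathbf{U}=\abs{\nabla\phi}^{p-2}A_\phi(\nabla_{e_i}\nabla\phi)$: the derivative of the scalar factor $\abs{\nabla\phi}^{p-2}$ produces exactly the correction $(p-2)\abs{\nabla\phi}^{-2}\hin{\nabla_{e_i}\nabla\phi}{\nabla\phi}\nabla\phi$ that assembles with $\abs{\nabla\phi}^{p-2}\nabla_{e_i}\nabla\phi$ to form $\abs{\nabla\phi}^{p-2}A_\phi(\nabla_{e_i}\nabla\phi)$. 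Hence $\mathcal U^i{}_j=\abs{\nabla\phi}^{p-2}\hin{A_\phi(\nabla_{e_j}\nabla\phi)}{e_i}$ and the stated trace formula follows by direct contraction. I expect the only mild obstacle to be keeping track of the fact that $\mathcal U$ is not symmetric (since $A_\phi$ is self-adjoint but $\nabla^2\phi$ and $A_\phi\circ\nabla^2\phi$ need not commute), so one must genuinely compute $\tr(\mathcal U\circ\mathcal U)=\mathcal U^i{}_j\mathcal U^j{}_i$ rather than $|\mathcal U|^2$; this is exactly the combination that the lemma's right-hand side prescribes.
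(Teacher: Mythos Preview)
Your proposal is correct and follows essentially the same route as the paper's proof: a pointwise computation in a normal orthonormal frame using the Ricci identity to produce the three terms of \eqref{eq:bochner-X}, followed by specialization to $X=\mathbf{U}=\abs{\nabla\phi}^{p-2}\nabla\phi$ and the explicit computation $\nabla_{e_i}\mathbf{U}=\abs{\nabla\phi}^{p-2}A_\phi(\nabla_{e_i}\nabla\phi)$ to obtain \eqref{eq:bochner-p}. The only cosmetic difference is that the paper phrases the commutator step via the curvature tensor formula $\nabla_{e_i}\nabla_XX=R(e_i,X)X+\nabla_X\nabla_{e_i}X+\nabla_{[e_i,X]}X$ rather than the index-form Ricci identity, but the content is identical.
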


\begin{proof}
This is a direct verification. In fact, we choose a local orthonormal tangent frames $\set{e_i}$ with the properties $\nabla e_{i}=0$ for all $i$ at a considering point. Then we compute at the considering point by using the Ricci identity
\begin{align*}
\Div\nabla_XX=&\hin{\nabla_{e_i}\nabla_XX}{e_i}\\
=&\hin{R\left(e_i,X\right)X+\nabla_{X}\nabla_{e_i}X+\nabla_{[e_i,X]}X}{e_i}\\
=&\mathrm{Ric}\left(X,X\right)+X\hin{\nabla_{e_i}X}{e_i}+ \hin{\nabla_{\nabla_{e_i}X}X}{e_i}\\
=&\mathrm{Ric}\left(X,X\right)+\hin{\nabla\Div X}{X}+ \tr\left(\mathcal X^2\right).
\end{align*}
Thus, we obtain the desired formula.

One readily sees from \eqref{eq:ww},
\begin{align*}
\dfrac{1}{p}\mathcal{L}_{p,\phi}\abs{\nabla\phi}^p=\Div(\nabla_{\mathbf{\Phi}}\mathbf{\Phi}),
\end{align*}
where
$$\mathbf{\Phi}=\abs{\nabla\phi}^{p-2}\nabla\phi.$$
Applying the Bochner formula \eqref{eq:bochner-X}, we obtain
\begin{align}\label{eq:bochner-phi-1}
\dfrac{1}{p}\mathcal{L}_{p,\phi}\abs{\nabla\phi}^p=\hin{\nabla_{\nabla_{e_i}\mathbf{\Phi}}\mathbf{\Phi}}{e_i}+\hin{\Div\mathbf{\Phi}}{\mathbf{\Phi}}
+\mathrm{Ric}\left(\mathbf{\Phi},\mathbf{\Phi}\right).
\end{align}
Notice that $\Div\mathbf{\Phi}=\Delta_p\phi$ from \eqref{eq:ww}. It suffices to compute the first terms on the right hand side of \eqref{eq:bochner-phi-1}. In fact, a similar computation as in \eqref{eq:ww}, we have
\begin{align*}
\nabla_{e_i}\mathbf{\Phi}=\abs{\nabla \phi}^{p-2}A_{\phi}\left(\nabla_{e_i}\nabla\phi\right).
\end{align*}
We complete the proof and obtain the desired Bochner formula \eqref{eq:bochner-p}.
\end{proof}

From now on, we always assume that $u\in W_{loc}^{1,p}\left(\Omega\right)\cap L_{loc}^{\infty}\left(\Omega\right)$ is a weak and positive solution of the equation \eqref{eq:quasi-h} with  $f\in C^0([0,\infty))\cap C^1((0,\infty))$ in $\Omega\subset M$.
We denote
$$
\Omega_{cr} = \{x\in \Omega:\nabla u(x)=0\}.
$$
According to Theorem 1.4 in \cite{Antonini2023} and the classical regularity theory (for example, see \cite{DiBenedetto, T1984,Uhl77regularity}), we know that
$$u\in C_{loc}^{1,\beta}(\Omega)\cap W^{2,2}_{loc}(\Omega\setminus\Omega_{cr})\quad \mbox{and}\quad u\in C_{loc}^{\infty}(\Omega^c_{cr}).$$
For the vector fields
$$\mathbf{U}=\abs{\nabla u}^{p-2}\nabla u\quad \mbox{and} \quad \mathbf{X}=u^{b}\mathbf{U}$$
defined as before, we have $\mathbf{U}$, $\mathbf{X}\in C_{loc}^{\alpha}$ and $\mathbf{U}=\mathbf{X}=0$ on $\Omega_{cr}$ since $p>1$.

Next, we need to consider that
$$\mathcal U=\nabla \mathbf U\quad \mbox{and}\quad \mathcal X=\nabla\mathbf X\in\mathrm{End}(TM).$$
It is worthy to point out that the endomorphisms $\mathcal{U}$ and $\mathcal{X}$ may have singularities on $\Omega_{cr}$. But we have the following conclusions on regularity which is implied in \cite{Antonini2023}. For the sake of convenience, we give a proof here.

\begin{lem}\label{lem:regular}
Let $u\in W_{loc}^{1,p}\left(\Omega\right)\cap L_{loc}^{\infty}\left(\Omega\right)$ be a weak and positive solution $u$ of \eqref{eq:quasi-h}, $\mathcal U$ and $\mathcal X$ be the associated endomorphisms as above. Then, we have
$$
\mathcal{U} ~~\mbox{and}~~ \mathcal{X}\in L^2_{loc}(\Omega).
$$
\end{lem}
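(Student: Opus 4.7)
The plan is to reduce the claim for $\mathcal{X}$ to the one for $\mathcal{U}$ via a Leibniz-type identity, and to establish $\mathcal{U}\in L^2_{loc}(\Omega)$ by a standard approximation plus Caccioppoli argument in the spirit of \cite{Antonini2023}.

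For the reduction, I would write distributionally
\begin{equation*}
\mathcal{X}=\nabla\bigl(u^b\mathbf{U}\bigr)=u^b\,\mathcal{U}+b\,u^{b-1}\,\nabla u\otimes\mathbf{U}.
\end{equation*}
Because $u\in C^{1,\beta}_{loc}(\Omega)$ is strictly positive, both $u^b$ and $b\,u^{b-1}$ lie in $L^\infty_{loc}(\Omega)$; since $\abs{\mathbf{U}}=\abs{\nabla u}^{p-1}$ and $\nabla u\in C^{0,\beta}_{loc}$, the field $\mathbf{U}$ itself lies in $L^\infty_{loc}(\Omega)$. Consequently the second summand is in $L^\infty_{loc}\subset L^2_{loc}$, and the first is in $L^2_{loc}$ once $\mathcal{U}$ is.

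For the main point $\mathcal{U}\in L^2_{loc}$, fix a compact set $K\Subset\Omega$ and a cutoff $\eta\in C_c^\infty(\Omega)$ with $\eta\equiv 1$ on $K$. Approximate $u$ by smooth solutions $u_\epsilon$ of the non-degenerate equation
\begin{equation*}
-\Div\Bigl(\bigl(\abs{\nabla u_\epsilon}^2+\epsilon\bigr)^{(p-2)/2}\nabla u_\epsilon\Bigr)=f(u_\epsilon)
\end{equation*}
on a slightly enlarged domain, matching $u$ on the boundary; by the classical theory of DiBenedetto, Tolksdorf and Uhlenbeck each $u_\epsilon$ is smooth, the family enjoys uniform $C^{1,\beta}$ bounds, and $u_\epsilon\to u$ in $C^{1,\beta'}_{loc}$ for some $\beta'<\beta$. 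Setting $\mathbf{U}_\epsilon=\bigl(\abs{\nabla u_\epsilon}^2+\epsilon\bigr)^{(p-2)/2}\nabla u_\epsilon$, I would differentiate the regularized equation in direction $e_k$, test against $\eta^2\,\partial_k u_\epsilon$, sum on $k$ and integrate by parts to produce a Caccioppoli-type inequality
\begin{equation*}
\int\eta^2\,\abs{\nabla\mathbf{U}_\epsilon}^2\leq C\int\bigl(\eta^2+\abs{\nabla\eta}^2\bigr)\Bigl(1+\abs{\nabla u_\epsilon}^{2(p-1)}+\abs{f(u_\epsilon)}^2\Bigr),
\end{equation*}
whose right-hand side is uniformly bounded in $\epsilon$ by the $C^{1,\beta}$ estimate. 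A weak $L^2$ limit of $\nabla\mathbf{U}_\epsilon$, identified using the $C^{1,\beta'}_{loc}$ convergence $\mathbf{U}_\epsilon\to\mathbf{U}$, yields $\mathbf{U}\in W^{1,2}(K)$, and hence $\mathcal{U}\in L^2_{loc}(\Omega)$.

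The main technical obstacle is the passage $\epsilon\to 0^+$ in a neighborhood of the critical set $\Omega_{cr}$. The formal identity
\begin{equation*}
\nabla\mathbf{U}=\abs{\nabla u}^{p-2}\nabla^2u+(p-2)\abs{\nabla u}^{p-4}\nabla^2u(\nabla u,\cdot)\otimes\nabla u
\end{equation*}
contains a genuinely singular factor $\abs{\nabla u}^{p-4}$ when $1<p<2$. One bypasses this by working with the composite $\mathbf{U}$ throughout and rewriting the delicate summand as $\abs{\nabla u}^{p-2}\nabla^2u(\omega,\cdot)\otimes\omega$ with unit field $\omega=\nabla u/\abs{\nabla u}$, so that the Caccioppoli estimate encodes the required cancellation uniformly in $\epsilon$. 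This is precisely the mechanism developed in \cite[Theorem~1.4]{Antonini2023}, which one invokes for the final rigorous identification of the weak limit with the distributional gradient of $\mathbf{U}$.
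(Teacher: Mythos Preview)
Your reduction of $\mathcal{X}$ to $\mathcal{U}$ via the Leibniz rule is the same as the paper's, and both arguments ultimately lean on \cite{Antonini2023}. The routes to $\mathcal{U}\in L^2_{loc}$ differ, however. The paper does not set up a regularization scheme at all: it splits into the easy case $p\geq 2$ (citing the corollary on p.~117 of \cite{SerZou02cauchy} directly) and, for $1<p<2$, combines \cite[Proposition~8.1]{SerZou02cauchy} (which gives $\abs{\nabla u}^{p-2}\nabla^2u\in L^2$ away from the critical set) with \cite[Corollary~1.6]{Antonini2023} (which says $\abs{\Omega_{cr}}=0$), together with the trivial $L^\infty$ bound on the rank-one projector $\frac{\nabla u}{\abs{\nabla u}}\otimes\frac{du}{\abs{\nabla u}}$. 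Your approach is in principle cleaner because a uniform Caccioppoli bound on $\nabla\mathbf{U}_\epsilon$ would deliver $\mathcal{U}\in L^2_{loc}$ across $\Omega_{cr}$ in one shot, without ever invoking the measure-zero statement.

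One point to tighten: testing the differentiated equation with $\eta^2\,\partial_k u_\epsilon$ yields control of
\[
\int \eta^2\,\bigl(\abs{\nabla u_\epsilon}^2+\epsilon\bigr)^{(p-2)/2}\abs{\nabla^2 u_\epsilon}^2,
\]
not of $\int\eta^2\abs{\nabla\mathbf{U}_\epsilon}^2\sim\int\eta^2\bigl(\abs{\nabla u_\epsilon}^2+\epsilon\bigr)^{p-2}\abs{\nabla^2 u_\epsilon}^2$. For $1<p<2$ the latter weight is strictly more singular, so the inequality you display does not follow from that test function. The fix---testing instead with $\eta^2\,\partial_k\mathbf{U}_\epsilon$ or equivalently $\eta^2\bigl(\abs{\nabla u_\epsilon}^2+\epsilon\bigr)^{(p-2)/2}A_{u_\epsilon}(\partial_k\nabla u_\epsilon)$---is exactly what \cite{Antonini2023} carries out, so your final citation covers it; just be explicit that the test function must be changed in the singular range.
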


\begin{proof}
It is obvious that
\begin{align*}
\mathcal{U} = \abs{\nabla u}^{p-2}\left(id_{TM}+(p-2)\frac{\nabla u}{\abs{\nabla u}}\otimes\frac{d u}{\abs{\nabla u}}\right)\nabla^2u\quad\text{and}\quad \mathcal X = bu^{b-1}\mathbf U\otimes du+u^b\mathcal U.
\end{align*}
When $p\geq 2$,  we have $\mathcal{U}, \mathcal{X}\in L^2_{loc}(\Omega)$ (for example see \cite[Corollary in P. 117]{SerZou02cauchy}). Now we consider the case $1<p<2$. According to \cite[Proposition 8.1]{SerZou02cauchy}, for any compact set $K$,
\begin{align*}
\abs{\nabla u}^{p-2}\nabla^2u\in L^2\left(K\setminus \Omega_{cr}\right).
\end{align*}
By a very recent result \cite[Corollary 1.6]{Antonini2023}, the measure of critical set $\Omega_{cr}$ is zero. Since
$$id_{TM}+(p-2)\frac{\nabla u}{\abs{\nabla u}}\otimes\frac{d u}{\abs{\nabla u}}\in L_{loc}^\infty\quad\mbox{and}\quad bu^{b-1}\mathbf U\otimes du\in C_{loc}^\alpha,$$ we can see
$$
\mathcal{U} ~~\mbox{and}~~ \mathcal{X}\in L^2_{loc}(\Omega).
$$
Thus, we finish the proof.
\end{proof}

We define
\begin{align*}
\mathscr X = \mathcal{X}-\frac{\mathrm{tr}\mathcal X}{n}id_{TM}=\mathcal{X}-\frac{\mathrm{div}\mathbf X}{n}id_{TM}\in \mathrm{End}(TM).
\end{align*}
Since
$$\Div\mathbf X = bu^{b-1}\abs{\nabla u}^p-u^bf(u)\in C_{loc}^0(\Omega),$$
thus $\mathscr X\in L^2_{loc}(\Omega)$. By the Lemma due to Serrin and Zou \cite[Lemma 6.3]{SerZou02cauchy}, we know that
\begin{align*}
\mathrm{tr}\left(\mathscr X^2\right)=\mathrm{tr}(\mathcal X^2)-\frac{1}{n}(\mathrm{tr}\mathcal X)^2\geq 0.
\end{align*}
It is obvious that
\begin{align*}
\mathrm{tr}\left(\mathscr X^2\right)\leq \abs{\mathscr X}^2
\end{align*}
and the "=" establishes if and only if $\mathscr X$ is a self-dual endomorphism, i.e.,
$$\langle\mathscr X(Y),\, Z\rangle=\langle Y,\, \mathscr X(Z)\rangle,\quad \forall Y, \, Z\in \Gamma(TM).$$
Here
$$\abs{\mathscr X}^2 =  \mathscr X_i^j \mathscr X_k^lg_{jl}g^{ik}.$$

In general, $\mathrm{tr}\left(\mathscr X^2\right)$ can not control $\abs{\mathscr X}^2$. However for the special tensor $\mathscr X$ defined above, we have the following algebraic inequalities, which is extremely  important for the proof of the main theorem.

\begin{lem}\label{lem:basic-3}
For the 2-tensor $\mathscr X$ defined above, the following estimate holds a.e. on $\Omega$
\begin{align*}
\frac{(p-1)^2+1}{2(p-1)}\tr\left(\mathscr X^2\right)\geq \abs{\mathscr X}^2\geq \tr\left(\mathscr X^2\right).
\end{align*}
\end{lem}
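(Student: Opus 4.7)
The plan is to reduce the estimate to a direct matrix computation in an adapted frame. Both inequalities are pointwise and homogeneous (scaling $\mathscr{X} \mapsto \lambda \mathscr{X}$ preserves them), and by the previous lemma together with \cite[Corollary 1.6]{Antonini2023} the critical set $\Omega_{cr}$ has measure zero, so it suffices to verify the estimate a.e. on $\Omega \setminus \Omega_{cr}$, where $u$ is smooth and $\nabla u \neq 0$. On this open set I would fix a point and choose a local orthonormal frame $\{e_1,\dots,e_n\}$ with $e_1 = \nabla u/\abs{\nabla u}$, so that in this frame $u_1 = s := \abs{\nabla u}$ and $u_i = 0$ for $i \geq 2$, while $s_j = u_{1j}$.

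Next I would compute $\mathcal{X}^i_j = \nabla_j \mathbf{X}^i$ directly from $\mathbf{X} = u^b \abs{\nabla u}^{p-2} \nabla u$. A short computation yields
\begin{align*}
\mathcal{X}^i_j = u^b s^{p-2}\left[\bigl(b u^{-1} s^2 + (p-1)u_{11}\bigr)\delta_{i1}\delta_{j1} + (p-1)u_{1j}\delta_{i1}(1-\delta_{j1}) + u_{1i}\delta_{j1}(1-\delta_{i1}) + u_{ij}(1-\delta_{i1})(1-\delta_{j1})\right].
\end{align*}
Writing the Hessian block as $U = (u_{1i})_{i\geq 2}$ and $H = (u_{ij})_{i,j\geq 2}$ (symmetric), this says that, up to the positive factor $u^b s^{p-2}$, the endomorphism $\mathcal X$ has the block form
\begin{align*}
\tilde{\mathcal X} = \begin{pmatrix} A & (p-1)U^{T} \\ U & H \end{pmatrix}, \qquad A := bu^{-1}s^2 + (p-1)u_{11}.
\end{align*}
The only source of asymmetry is the off-diagonal block: the $(1,j)$ entry carries the factor $(p-1)$ while the $(j,1)$ entry does not.

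The third step is to subtract the trace. Set $\tilde c = \operatorname{tr}\tilde{\mathcal X}/n$; then $\tilde{\mathscr X} := \tilde{\mathcal X} - \tilde c\,\mathrm{id}$ has the same off-diagonal block, and its diagonal is shifted. Writing
\begin{align*}
B := (A-\tilde c)^2 + \abs{H-\tilde c I_{n-1}}^2,
\end{align*}
a direct index count (using symmetry of $H$) gives
\begin{align*}
\abs{\tilde{\mathscr X}}^2 = B + \bigl((p-1)^2+1\bigr)\abs{U}^2, \qquad \operatorname{tr}(\tilde{\mathscr X}^2) = B + 2(p-1)\abs{U}^2.
\end{align*}
Both claimed inequalities then reduce, after cancellation, to the single elementary fact $(p-1)^2+1 \geq 2(p-1)$, i.e.\ $((p-1)-1)^2 \geq 0$, which is trivial. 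Rescaling by $u^b s^{p-2}$ restores the original inequality for $\mathscr{X}$.

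I do not anticipate a serious obstacle: the only delicate point is the bookkeeping in the off-diagonal block, where one must not confuse the role of $(p-1)$ on the $(1,j)$ side versus the $(j,1)$ side, and the observation that the symmetric parts $A-\tilde c$ and $H-\tilde c I_{n-1}$ contribute identically to $\abs{\tilde{\mathscr X}}^2$ and $\operatorname{tr}(\tilde{\mathscr X}^2)$, so that only the $\abs{U}^2$ coefficients differ.
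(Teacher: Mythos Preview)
Your proposal is correct and follows essentially the same route as the paper: both pick the adapted orthonormal frame with $e_1=\nabla u/\abs{\nabla u}$, compute the block form of $\mathcal X$, and observe that the only asymmetry sits in the first-row/first-column off-diagonal block with coefficients $(p-1)$ versus $1$, so that $\abs{\mathscr X}^2$ and $\tr(\mathscr X^2)$ differ only in the $\abs{U}^2$-coefficient. Your packaging is slightly cleaner in one respect: by writing the symmetric contribution directly as $B=(A-\tilde c)^2+\abs{H-\tilde c I_{n-1}}^2$, the nonnegativity of $B$ (which is what makes the upper bound $\frac{(p-1)^2+1}{2(p-1)}\tr(\mathscr X^2)\geq\abs{\mathscr X}^2$ work) is immediate, whereas the paper proves the corresponding inequality $J_0\geq 0$ through an additional Cauchy--Schwarz step on the diagonal entries.
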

		
\begin{proof}
Direct computation shows that
\begin{align*}
\tr\left(\mathscr X^2\right) = \left(\mV_{i}^j-\frac{1}{n}\tr\mV \delta_i^j\right)\left(\mV^{i}_j-\frac{1}{n}\tr\mV \delta_j^i\right)= \tr(\mV^2)-\frac{1}{n}\left(\tr\mV\right)^2
\end{align*}
and
\begin{align*}
\abs{\mathscr X}^2 = \left(\mV_{i}^j-\frac{\tr\mV}{n} \delta_i^j\right)\left(\mV_{i}^j-\frac{\tr\mV}{n}\delta_i^j\right) =\tr\left(\mV\mV^T\right)-\frac{1}{n}\left(\tr\mV\right)^2.
\end{align*}
			
Let $\{e_1,e_2,\ldots, e_n\}$ be  a local orthonormal frames of $TM$ on a domain with $\nabla u\neq 0$ such that $e_1=\frac{\nabla u}{\abs{\nabla u}}$. For simplicity, we denote by $h=\abs{\nabla u}^2$.
		
First, we have
\begin{align*}
\mV_{ij}=&\nabla_i\left(u^bh^{\frac{p}{2}-1}u_j\right)\\
=&bu^{b-1}h^{\frac{p}{2}-1}u_iu_j+u^b\left(h^{\frac{p}{2}-1}u_{ij}+
\left(\frac{p}{2}-1\right)h^{\frac{p}{2}-2}h_iu_j\right).
\end{align*}
It follows that
\begin{align*}
\mathrm{tr}\left(\mathcal X\mathcal X^T\right)=\sum_{i,j}\mV_{ij}^2
=&b^2u^{2b-2}h^{p} +u^{2b}h^{p-2}u_{ij}^2+\frac{p}{2}\left(\frac{p}{2}-1\right) u^{2b}h^{p-3}|\nabla h|^2
+b(p-1)u^{2b-1}h^{p-2}u_ih_i
\end{align*}
and
\begin{align*}
\mathrm{tr}\left(\mathcal X^2\right)=\sum_{i,j}\mV_{ij}\mV_{ji}=&b^2u^{2b-2}h^{p} +u^{2b}h^{p-2}u_{ij}^2+b(p-1)u^{2b-1}h^{p-2}u_ih_i\\
&+u^{2b}\left(\frac{p}{2}-1\right)^2h^{p-4}\langle\nabla h, \nabla u\rangle^2+u^{2b}\left(\frac{p}{2}-1\right)h^{p-3}|\nabla h|^2.
\end{align*}
			
Obviously, we have $u_1 = h^{1/2}$. Direct computation shows
\begin{align*}
u_{11} = \frac{1}{2}h^{-1/2}h_1 = \frac{1}{2}h^{-1}\langle\nabla u,\nabla h\rangle.
\end{align*}
Thus we have
\begin{align*}
\sum_{i,j}\mV_{ij}^2=&\left(bu^{b-1}h^{\frac{p}{2}}+(p-1)u^bh^{\frac{p}{2}-1}u_{11}\right)^2 +u^{2b}h^{p-2}\sum_{i,j\geq 2}u_{1i}^2
+\left((p-1)^2+1\right) u^{2b}h^{p-2}\sum_{i=2}^nu_{1i}^2
\end{align*}
and
\begin{align}\label{eq:linear-1}
\sum_{i,j}\mV_{ij}\mV_{ji}=&\left(bu^{b-1}h^{\frac{p}{2}}+(p-1)u^bh^{\frac{p}{2}-1}u_{11}\right)^2+u^{2b}h^{p-2}\sum_{i,j\geq2} u_{ij}^2
+2u^{2b}(p-1)h^{p-1} \sum_{i=2}^nu_{1i}^2.
\end{align}
We can also compute
\begin{align*}				
\left(\Div\mathbf{X}\right)^2 = \left(bu^{b-1}h^{\frac{p}{2} } +u^b\left(h^{\frac{p}{2}-1}\sum_{i=2}^nu_{ii}+ (p-1)h^{\frac{p}{2}-1} u_{11}\right)\right)^2.
\end{align*}
			
If we denote
\begin{align*}
J_0 =& \left(bu^{b-1}h^{\frac{p}{2}}+(p-1)u^bh^{\frac{p}{2}-1}u_{11}\right)^2+u^{2b}h^{p-2}\sum_{i,j\geq2}^nu_{ij}^2\\
&-\frac{1}{n}\left(bu^{b-1}h^{\frac{p}{2}} +u^b\left(h^{\frac{p}{2}-1}\sum_{i=2}^nu_{ii}+ (p-1)h^{\frac{p}{2}-1}u_{11}\right)\right)^2,
\end{align*}
then we have
\begin{align*}
J_0\geq& \left(bu^{b-1}h^{\frac{p}{2}}+(p-1)u^bh^{\frac{p}{2}-1}u_{11}\right)^2+u^{2b}h^{p-2}\sum_{i=2}^nu_{ii}^2\\
&-\frac{1}{n}\left(bu^{b-1}h^{\frac{p}{2} }+(p-1)u^bh^{\frac{p}{2}-1} u_{11} +u^b h^{\frac{p}{2}-1}\sum_{i=2}^nu_{ii}\right)^2\\
\geq& 0.
\end{align*}
			
Since
\begin{align*}
\abs{\mathscr X}^2 = J_0+((p-1)^2+1) u^{2b}h^{p-2}\sum_{i=2}^nu_{1i}^2
\end{align*}
and
\begin{align*}
\tr\left(\mathscr X^2\right) = J_0+2(p-1)u^{2b}h^{p-1} \sum_{i=2}^nu_{1i}^2,
\end{align*}
we obtain
\begin{align*}
\dfrac{(p-1)^2+1}{2(p-1)}\tr\left(\mathscr X^2\right)\geq \abs{\mathscr X}^2\geq\tr\left(\mathscr X^2\right).
\end{align*}
Thus, we finish the proof of this lemma.
\end{proof}

\begin{rem}
From the proof of the above Lemma \ref{lem:basic-3}, it is easy to see that for the tensor $\mathscr X$ difined as before, where $u$ is a $C^3$-smooth positive function with $|\nabla u|\neq 0$ on $\Omega$ (in other words, we do not need assume that $u$ is a solution to the equation \eqref{eq:quasi-h}), there holds pointwisely true the same inequality
\begin{align*}
\dfrac{(p-1)^2+1}{2(p-1)}\tr\left(\mathscr X^2\right)\geq \abs{\mathscr X}^2\geq\tr\left(\mathscr X^2\right).
\end{align*}
\end{rem}

Based on the Bochner formula \eqref{eq:bochner-X}, we can state the following general pointwise identity.
\begin{lem}\label{lem:basic1}
For any positive solution $u$ of \eqref{eq:quasi-h} in $\Omega$, constants $a$ and $b$,  the following identity
\begin{align*}
&\Div\left(u^a\left(\nabla_\mathbf{X}\mathbf{X}-\Div \mathbf{X} \mathbf{X}\right)-\dfrac{p-1}{p}\hin{\mathbf{X}}{\nabla u^{a}}\mathbf{X}\right)\\
=&u^a\left(\mathrm{tr}(\mathcal X^2)-\abs{\Div
\mathbf{X}}^2+\mathrm{Ric}\left(\mathbf{X},\mathbf{X}\right)\right)-\dfrac{(p-1)(a+2b-1)a}{p}u^{a+2b-2}\abs{\nabla u}^{2p}-\dfrac{(2p-1)a}{p}u^{a+2b-1}\abs{\nabla u}^p\Delta_pu
\end{align*}
holds a.e. on $\Omega$.
\end{lem}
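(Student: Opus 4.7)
The identity involves vector fields that are merely $L^2_{loc}$ across the critical set $\Omega_{cr}=\{\nabla u=0\}$, so the plan is to work classically on the open set $\Omega\setminus\Omega_{cr}$, where $u\in C^{\infty}$ and $\abs{\nabla u}\neq 0$, and then invoke \autoref{lem:regular} together with $\abs{\Omega_{cr}}=0$ to extend the identity a.e.\ to $\Omega$. On $\Omega\setminus\Omega_{cr}$ the engine is the Bochner formula \eqref{eq:bochner-X} applied to $X=\mathbf{X}$, which gives
\begin{align*}
\Div\left(\nabla_{\mathbf{X}}\mathbf{X}\right)=\tr(\mathcal X^{2})+\hin{\nabla\Div\mathbf{X}}{\mathbf{X}}+\mathrm{Ric}(\mathbf{X},\mathbf{X}).
\end{align*}
Combining this with $\Div(u^{a}\mathbf{Y})=u^{a}\Div\mathbf{Y}+\hin{\nabla u^{a}}{\mathbf{Y}}$ applied to $\mathbf{Y}=\nabla_{\mathbf{X}}\mathbf{X}$ and $\mathbf{Y}=\Div\mathbf{X}\cdot\mathbf{X}$ reproduces precisely the main ``$u^{a}(\tr(\mathcal X^{2})-(\Div\mathbf{X})^{2}+\mathrm{Ric}(\mathbf{X},\mathbf{X}))$'' block on the right-hand side, and leaves the residual
\begin{align*}
\mathcal{R}\coloneqq \hin{\nabla u^{a}}{\nabla_{\mathbf{X}}\mathbf{X}}-\Div\mathbf{X}\hin{\nabla u^{a}}{\mathbf{X}}-\Div\!\left(\tfrac{p-1}{p}\hin{\mathbf{X}}{\nabla u^{a}}\mathbf{X}\right)
\end{align*}
to be identified with the two algebraic terms.

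Next I would compute $\mathcal{R}$ explicitly using $\mathbf{X}=u^{b}\mathbf{U}$. From $\mathbf{U}u=\abs{\nabla u}^{p}$ one gets $\nabla_{\mathbf{X}}\mathbf{X}=bu^{2b-1}\abs{\nabla u}^{p}\mathbf{U}+u^{2b}\nabla_{\mathbf{U}}\mathbf{U}$, and \eqref{eq:ww} together with self-adjointness of $A_{u}$ and the eigen-identity $A_{u}(\nabla u)=(p-1)\nabla u$ yields
\begin{align*}
\hin{\nabla u^{a}}{\nabla_{\mathbf{X}}\mathbf{X}}=abu^{a+2b-2}\abs{\nabla u}^{2p}+\tfrac{a(p-1)}{p}u^{a+2b-1}\abs{\nabla u}^{p-2}\hin{\nabla u}{\nabla\abs{\nabla u}^{p}}.
\end{align*}
Combined with $\Div\mathbf{X}=bu^{b-1}\abs{\nabla u}^{p}+u^{b}\Delta_{p}u$ and $\hin{\nabla u^{a}}{\mathbf{X}}=au^{a+b-1}\abs{\nabla u}^{p}$, a second application of the Leibniz rule to the factor $\phi\coloneqq\tfrac{p-1}{p}\hin{\mathbf{X}}{\nabla u^{a}}=\tfrac{a(p-1)}{p}u^{a+b-1}\abs{\nabla u}^{p}$ expands $\Div(\phi\mathbf{X})$ into three summands, one of which is exactly $\tfrac{a(p-1)}{p}u^{a+2b-1}\abs{\nabla u}^{p-2}\hin{\nabla u}{\nabla\abs{\nabla u}^{p}}$.

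The decisive cancellation occurs at this stage: this second-derivative term from $\Div(\phi\mathbf{X})$ annihilates the matching term in $\hin{\nabla u^{a}}{\nabla_{\mathbf{X}}\mathbf{X}}$, which is the whole reason for the prefactor $\tfrac{p-1}{p}$. What remains in $\mathcal{R}$ is purely algebraic, and collecting the coefficients of $u^{a+2b-2}\abs{\nabla u}^{2p}$ gives $-\tfrac{a(p-1)(a+2b-1)}{p}$, while those of $u^{a+2b-1}\abs{\nabla u}^{p}\Delta_{p}u$ combine as $-a-\tfrac{a(p-1)}{p}=-\tfrac{a(2p-1)}{p}$, matching the stated right-hand side. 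The main obstacle is not any single step but the bookkeeping: tracking how the various Leibniz expansions produce symmetric pairs of cross terms that cancel, and verifying that no derivatives of $\Delta_{p}u$ appear (this follows because $\Div\mathbf{X}$ is manipulated only algebraically, never differentiated once the Bochner formula has been applied). Once the pointwise identity is established on $\Omega\setminus\Omega_{cr}$, the fact that both sides lie in $L^{1}_{loc}(\Omega)$ by \autoref{lem:regular} and $\abs{\Omega_{cr}}=0$ promotes it to an a.e.\ identity on $\Omega$.
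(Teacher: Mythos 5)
Your proposal is correct and follows essentially the same route as the paper: apply the Bochner formula \eqref{eq:bochner-X} to $\mathbf{X}$, expand the weighted divergences by the Leibniz rule, and exploit the cancellation of the second-derivative term $\hin{\nabla u}{\nabla\abs{\nabla u}^{p}}$ (equivalently the $\Delta_{\infty}u$ terms in the paper's bookkeeping), which is exactly what the weight $\tfrac{p-1}{p}$ is designed to do. The only difference is organizational — the paper packages the gradient-pairing term via $\nabla_{\mathbf X}\mathbf X-\Div\mathbf X\,\mathbf X=u^{2b}\abs{\nabla u}^{2p-4}\left(\nabla_{\nabla u}\nabla u-\Delta u\,\nabla u\right)$ while you use \eqref{eq:ww} and $A_u(\nabla u)=(p-1)\nabla u$ — and your coefficient collection matches the stated identity.
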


\begin{proof}
We compute from \eqref{eq:bochner-X} that
\begin{align}
&\Div\left(u^a\left(\nabla_\mathbf{X}\mathbf{X}-\Div\mathbf{X} \mathbf{X}\right)\right)\notag\\
=&u^a\Div\left(\nabla_\mathbf{X}\mathbf{X}-\Div\mathbf{X} \mathbf{X}\right)+\hin{\nabla u^a}{\nabla_{\mathbf{X} }\mathbf{X} -\Div\mathbf{X} \mathbf{X}}\notag\\
=&u^a\left(\mathrm{Ric}\left(\mathbf{X},\mathbf{X}\right)+\hin{\nabla\Div\mathbf{X}}{\mathbf{X}}+\mathrm{tr}(\mathcal X^2)-\left(\hin{\mathbf{X}}{\nabla\Div\mathbf{X}}+\abs{\Div \mathbf{X}}^2\right)\right)\notag\\
&+au^{a+2b-1}\abs{\nabla u}^{2p-4}\hin{\nabla u}{\nabla_{\nabla u}\nabla u-\Delta u\nabla u}\notag\\
=&u^a\left(\mathrm{Ric}\left(\mathbf{X},\mathbf{X}\right)+\mathrm{tr}(\mathcal X^2)-\abs{\Div \mathbf{X}}^2\right)+(p-1)au^{a+2b-1}\abs{\nabla u}^{2p-4}\Delta_{\infty}u-au^{a+2b-1}\abs{\nabla u}^{p}\Delta_pu.\label{eq:basic1}
\end{align}
On the other hand,
\begin{align}
\Div\left(u^{a-1}\hin{\mathbf{X}}{\nabla u}\mathbf{X}\right)=&\Div\left(u^{a+2b-1}\abs{\nabla u}^p\mathbf{U}\right)\notag\\
=&\hin{\nabla\left(u^{a+2b-1}\abs{\nabla u}^p\right)}{\mathbf{U}}+u^{a+2b-1}\abs{\nabla u}^p\Div\mathbf{U}\notag\\
=&(a+2b-1)u^{a+2b-2}\abs{\nabla u}^{2p}+pu^{a+2b-1}\abs{\nabla u}^{2p-4}\Delta_{\infty}u+u^{a+2b-1}\abs{\nabla u}^p\Delta_pu.\label{eq:basic2}
\end{align}
We obtain from \eqref{eq:basic1} and \eqref{eq:basic2}
\begin{align*}
&\Div\left(u^a\left(\nabla_\mathbf{X}\mathbf{X}-\Div\mathbf{X}\mathbf{X}\right)-\dfrac{p-1}{p}\hin{\mathbf{X}}{\nabla u^{a}}\mathbf{X}\right)\\
=&u^a\left(\mathrm{Ric}\left(\mathbf{X},\mathbf{X}\right)+\mathrm{tr}(\mathcal X^2)-\abs{\Div \mathbf{X}}^2\right)+(p-1)au^{a+2b-1}\abs{\nabla u}^{2p-4}\Delta_{\infty}u-au^{a+2b-1}\abs{\nabla u}^{p}\Delta_pu\\
&-\dfrac{(p-1)a}{p}\left((a+2b-1)u^{a+2b-2}\abs{\nabla u}^{2p}+pu^{a+2b-1}\abs{\nabla u}^{2p-4}\Delta_{\infty}u+u^{a+2b-1}\abs{\nabla u}^p\Delta_pu\right)\\
=&u^a\left(\mathrm{tr}(\mathcal X^2)-\abs{\Div \mathbf{X}}^2+\mathrm{Ric}\left(\mathbf{X},\mathbf{X}\right)\right)-\dfrac{(p-1)(a+2b-1)a}{p}u^{a+2b-2}\abs{\nabla u}^{2p}-\dfrac{(2p-1)a}{p}u^{a+2b-1}\abs{\nabla u}^p\Delta_pu.
\end{align*}
We complete the proof.
\end{proof}

It is worthy to point out that the above lemma holds true for any $u\in C^3(\Omega)$ with $\nabla u\neq 0$ a.e. on $\Omega$ and one does not need to assume that $u$ satisfies the equation \eqref{eq:quasi-h}. Applying this lemma we can derive the following fundamental and important pointwise identity.

\begin{lem}\label{lem:basic2}
Let $u$ be a weak and positive solution to \eqref{eq:quasi-h} in $\Omega\subset M$.  For any $a$ and $b\in\mathbb{R}$, the following identity holds in the $L^1_{loc}$ sense,
\begin{align}
\label{eq:basic-0}
\begin{split}
&\Div\left(u^{a}\left(\mathscr{X}(\mathbf X)\right)-\dfrac{n(p-1)a+(n-1)pb}{np}u^{a-1}\hin{\mathbf{X}}{\nabla u}\mathbf{X}\right)\\
=&u^{a}\left(\tr\left(\mathscr X^2\right)+\mathrm{Ric}\left(\mathbf{X},\mathbf{X}\right)\right)\\
&+\left(\dfrac{(p-1)a}{p}\left(1-\dfrac{(n-p)a}{(n-1)p}\right)-\dfrac{n-1}{n}\left(b+\dfrac{n(p-1)a}{(n-1)p}\right)^2\right)u^{a+2b-2}
\abs{\nabla u}^{2p}\\
&+\dfrac{n-1}{n}\left(\dfrac{((n+1)p-n)a}{(n-1)p}f(u)-uf'(u)\right)u^{a+2b-1}\abs{\nabla u}^{p}.
\end{split}
\end{align}
Here $$\mathbf{X}=u^{b}\abs{\nabla u}^{p-2}\nabla u.$$
In particular, one takes
\begin{align*}
a=\dfrac{(n-1)p\alpha}{(n+1)p-n},\quad\quad b=-\dfrac{n(p-1)\alpha}{(n+1)p-n},
\end{align*}
and
$$\mathbf{X}=u^{-\frac{n(p-1)\alpha}{(n+1)p-n}}\abs{\nabla u}^{p-2}\nabla u$$
to obtain the following identity
\begin{align}\label{eq:basic'}
\begin{split}&\Div\left(u^{\frac{(n-1)p\alpha}{(n+1)p-n}}\left(\mathscr{X}(\mathbf X)\right)\right)\\
=\,&u^{\frac{(n-1)p\alpha}{(n+1)p-n}}\left(\tr\left(\mathscr X^2\right)+\mathrm{Ric}\left(\mathbf{X},\mathbf{X}\right)\right)\\
&+\dfrac{(n-1)(p-1)\alpha}{(n+1)p-n}\left(1-\dfrac{(n-p)\alpha}{(n+1)p-n}\right)u^{-\frac{((n+1)p-2n)\alpha}{(n+1)p-n}-2}\abs{\nabla u}^{2p}\\
&+\dfrac{n-1}{n}\left(\alpha f(u)-uf'(u)\right)u^{-\frac{((n+1)p-2n)\alpha}{(n+1)p-n}-1}\abs{\nabla u}^{p}.
\end{split}
\end{align}
\end{lem}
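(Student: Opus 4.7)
The plan is to deduce \eqref{eq:basic-0} directly from \autoref{lem:basic1} by substituting the equation $\Delta_p u=-f(u)$ and performing two algebraic reorganizations: one that replaces the full endomorphism $\mathcal X$ by its trace-free part $\mathscr X$, and one that absorbs the resulting singular $f(u)^2$ term via an auxiliary divergence identity.

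First I would use the orthogonal decomposition $\mathcal X = \mathscr X + \frac{1}{n}(\Div \mathbf X)\,\mathrm{id}_{TM}$, which gives
\[
\tr(\mathcal X^2)-(\Div \mathbf X)^2=\tr(\mathscr X^2)-\frac{n-1}{n}(\Div \mathbf X)^2.
\]
Substituting the explicit form $\Div \mathbf X=bu^{b-1}\abs{\nabla u}^p-u^bf(u)$ and expanding the square produces a $\abs{\nabla u}^{2p}$ contribution, a cross $\abs{\nabla u}^pf(u)$ contribution, and an unwanted $-\frac{n-1}{n}u^{a+2b}f(u)^2$ term. In parallel, using $\nabla_{\mathbf X}\mathbf X=\mathcal X(\mathbf X)$ together with $\langle\mathbf X,\nabla u\rangle=u^b\abs{\nabla u}^p$, the vector field under the divergence in \autoref{lem:basic1} can be rewritten as
\[
u^a(\nabla_{\mathbf X}\mathbf X-\Div\mathbf X\cdot\mathbf X)=u^a\mathscr X(\mathbf X)-\frac{(n-1)b}{n}u^{a-1}\hin{\mathbf X}{\nabla u}\mathbf X+\frac{n-1}{n}u^{a+b}f(u)\mathbf X,
\]
so combining with the $-\frac{(p-1)a}{p}u^{a-1}\hin{\mathbf X}{\nabla u}\mathbf X$ already present in \autoref{lem:basic1} assembles exactly the coefficient $-\frac{n(p-1)a+(n-1)pb}{np}$ required by \eqref{eq:basic-0}, at the price of an extra divergence $\frac{n-1}{n}\Div(u^{a+b}f(u)\mathbf X)$ on the left.

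Next I would move that auxiliary divergence to the right-hand side. A direct computation gives
\[
\Div(u^{a+b}f(u)\mathbf X)=(a+2b)u^{a+2b-1}f(u)\abs{\nabla u}^p+u^{a+2b}f'(u)\abs{\nabla u}^p-u^{a+2b}f(u)^2,
\]
which simultaneously cancels the singular $f(u)^2$ contribution and produces the missing $-\frac{n-1}{n}uf'(u)\abs{\nabla u}^pu^{a+2b-1}$ term in \eqref{eq:basic-0}. It then remains to collect coefficients; elementary expansions confirm
\[
-\frac{(n-1)b^2}{n}-\frac{(p-1)a(a+2b-1)}{p}=\frac{(p-1)a}{p}\left(1-\frac{(n-p)a}{(n-1)p}\right)-\frac{n-1}{n}\left(b+\frac{n(p-1)a}{(n-1)p}\right)^2
\]
and $\frac{(2p-1)a}{p}+\frac{2(n-1)b}{n}-\frac{(n-1)(a+2b)}{n}=\frac{((n+1)p-n)a}{np}$, matching \eqref{eq:basic-0}.

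The specialization \eqref{eq:basic'} is then immediate substitution of the tailored $a$ and $b$: the choice is made precisely so that $n(p-1)a+(n-1)pb=0$ (killing the second divergence term on the left) and $b+\frac{n(p-1)a}{(n-1)p}=0$ (killing the square in the $\abs{\nabla u}^{2p}$ coefficient). The main obstacle is not conceptual but the bookkeeping of the many coefficients through several expansions; a minor analytic point is that, by \autoref{lem:regular}, $\mathcal X$ is only $L^2_{loc}$, so the identity must be read in the $L^1_{loc}$ sense, but every summand on both sides is automatically of that class, so the distributional identity is meaningful and equivalent to the pointwise identity obtained on $\Omega\setminus\Omega_{cr}$.
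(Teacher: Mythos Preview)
Your proposal is correct and follows essentially the same route as the paper: both start from \autoref{lem:basic1}, pass from $\mathcal X$ to $\mathscr X$ via $\tr(\mathcal X^2)-(\Div\mathbf X)^2=\tr(\mathscr X^2)-\tfrac{n-1}{n}(\Div\mathbf X)^2$, and eliminate the resulting $f(u)^2$ term by the auxiliary divergence of $u^{a+b}f(u)\mathbf X=-u^{a+2b}\Delta_pu\,\mathbf U$. The only organizational difference is that the paper keeps $(\Delta_pu)^2$ abstract until after writing the intermediate identity \eqref{eq:basic3} and then moves the compensating divergence $\Div\bigl(u^{a}(\Div\mathbf X-bu^{-1}\hin{\mathbf X}{\nabla u})\mathbf X\bigr)$ from right to left, whereas you substitute $\Delta_pu=-f(u)$ immediately and split the vector field under the divergence on the left; the two computations are algebraically identical.
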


\begin{proof}
One can check
\begin{align*}
\Div\mathbf{X}=u^b\Delta_pu+bu^{b-1}\abs{\nabla u}^p.
\end{align*}
According to \autoref{lem:basic1},
\begin{align}
&\Div\left(u^a\left(\nabla_\mathbf{X}\mathbf{X}-\Div\mathbf{X}\mathbf{X}\right)-\dfrac{p-1}{p}\hin{\mathbf{X}}{\nabla u^{a}}\mathbf{X}\right)\notag\\
=&u^{a}\left(\hin{\nabla_{\nabla_{e_i}\mathbf{X}}\mathbf{X}}{e_i}-\dfrac{1}{n}\abs{\Div \mathbf{X}}^2+\mathrm{Ric}\left(\mathbf{X},\mathbf{X}\right)\right)\notag\\
&-\dfrac{n-1}{n}u^a\abs{\Div \mathbf{X}}^2-\dfrac{(p-1)(a+2b-1)a}{p}u^{a+2b-2}\abs{\nabla u}^{2p}-\dfrac{(2p-1)a}{p}u^{a+2b-1}\abs{\nabla u}^p\Delta_pu\notag\\
=&u^{a}\left(\tr\left(\mathscr X^2\right)+\mathrm{Ric}\left(\mathbf{X},\mathbf{X}\right)\right)-\dfrac{n-1}{n}u^a\abs{u^{b}\Delta_pu+bu^{b-1}\abs{\nabla u}^p}^2\notag\\
&-\dfrac{(p-1)(a+2b-1)a}{p}u^{a+2b-2}\abs{\nabla u}^{2p}-\dfrac{(2p-1)a}{p}u^{a+2b-1}\abs{\nabla u}^p\Delta_pu \notag\\
=&u^{a}\left(\tr\left(\mathscr X^2\right)+\mathrm{Ric}\left(\mathbf{X},\mathbf{X}\right)\right)\notag\\
&+\left(\dfrac{p-1}{p}a-\dfrac{n-1}{n}b^2-\dfrac{p-1}{p}a^2-\dfrac{2(p-1)}{p}ab\right)u^{a+2b-2}\abs{\nabla u}^{2p}-\dfrac{n-1}{n}u^{a+2b}\left(\Delta_pu\right)^2\notag\\
&-\left(\dfrac{2(n-1)b}{n}+\dfrac{(2p-1)a}{p}\right)u^{a+2b-1}\abs{\nabla u}^{p}\Delta_pu.\label{eq:basic3}
\end{align}

By the equation \eqref{eq:quasi-h}, we have
\begin{align*}
u^{a+2b}\left(\Delta_pu\right)^2=&u^{a+2b}\Delta_pu\Div\mathbf{U}\\
=\,&\Div\left(u^{a+2b}\Delta_pu\mathbf{U}\right)-\hin{\nabla\left(u^{a+2b}\Delta_pu\right)}{\mathbf{U}}\\
=&\Div\left(u^{a+2b}\Delta_pu\abs{\nabla u}^{p-2}\nabla u\right)-\left((a+2b)u^{a+2b-1}\abs{\nabla u}^{p}\Delta_pu+u^{a+2b}\abs{\nabla u}^{p-2}\hin{\nabla\Delta_pu}{\nabla u}\right)\\
=&\Div\left(u^{a}\left(\Div\mathbf{X}-bu^{-1}\hin{\mathbf{X}}{\nabla u}\right) \mathbf{X}\right)+\left((a+2b)f(u)+uf'(u)\right)u^{a+2b-1}\abs{\nabla u}^{p}.
\end{align*}
Inserting the above identities into \eqref{eq:basic3}, we obtain
\begin{align*}
&\Div\left(u^{a}\left(\nabla_{\mathbf{X}}\mathbf{X}-\dfrac{\Div\mathbf{X}}{n}\mathbf{X}\right)-\dfrac{n(p-1)a+(n-1)pb}{np}u^{a-1}
\hin{\mathbf{X}}{\nabla u}\mathbf{X}\right)\\
=\,&u^{a}\left(\tr\left(\mathscr X^2\right)+\mathrm{Ric}\left(\mathbf{X},\mathbf{X}\right)\right)\\
&+\left(\dfrac{p-1}{p}a-\dfrac{n-1}{n}b^2-\dfrac{p-1}{p}a^2-\dfrac{2(p-1)}{p}ab\right)u^{a+2b-2}\abs{\nabla u}^{2p}\\
&+\left(\dfrac{((n+1)p-n)a}{np}f(u)-\dfrac{n-1}{n}uf'(u)\right)u^{a+2b-1}\abs{\nabla u}^{p}.
\end{align*}
Finally, noticing that
\begin{align*}
\dfrac{p-1}{p}a-\dfrac{n-1}{n}b^2-\dfrac{p-1}{p}a^2-\dfrac{2(p-1)}{p}ab=&\dfrac{(p-1)a}{p}\left(1-\dfrac{(n-p)a}{(n-1)p}\right)
-\dfrac{n-1}{n}\left(b+\dfrac{n(p-1)a}{(n-1)p}\right)^2,
\end{align*}
we get the desired formula \eqref{eq:basic'} and complete the proof formally. By \autoref{lem:regular}, we know that $\tr\left(\mathcal X^2\right)\in L^1_{loc}$.  It follows that every term on the right hand side of \cref{lem:basic2} lies in $L^1_{loc}$.
\end{proof}

In the end of this section, we collect some useful inequalities in geometric analysis. Let $M^n$ be a Riemannian manifold with Ricci curvature bounded from below by some nonnegative constant $-(n-1)\kappa$. The first one is the following Saloff-Coste's Dirichlet Sobolev inequality: for every $\psi\in C_0^{\infty}\left(B_{R}\right)$
\begin{align*}
\left(\fint_{B_{R}}\abs{\psi}^{\frac{n}{n-1}}\right)^{\frac{n-1}{n}}\leq e^{C_{n}\left(1+\sqrt{\kappa}R\right)}R\left(\fint_{B_{R}}\abs{\nabla\psi}+R^{-1}\fint_{B_{R}}\abs{\psi}\right).
\end{align*}
We also have the following Dirichlet Poincar\'e inequality
\begin{align*}
\int_{B_R}\abs{\psi}\leq e^{C_n\left(1+\sqrt{\kappa}R\right)}R\int_{B_R}\abs{\nabla\psi},\quad \forall \psi\in C_0^{\infty}\left(B_{R}\right).
\end{align*}
Hence, we have the following Sobolev inequality
\begin{align*}
\left(\fint_{B_{R}}\abs{\psi}^{\frac{n}{n-1}}\right)^{\frac{n-1}{n}}\leq e^{C_{n}\left(1+\sqrt{\kappa}R\right)}R\fint_{B_{R}}\abs{\nabla\psi}, \quad\forall \psi\in C_0^{\infty}\left(B_{R}\right).
\end{align*}
Denote
\begin{align*}
C_{\mathcal{SD},1}=e^{C_{n}\left(1+\sqrt{\kappa}R\right)},
\end{align*}
and set
\begin{align*}
\chi_{n,p}=\begin{cases}
\frac{n}{n-p},&1\leq p<n,\\
>\frac{n}{(n-1)p},&p\geq n.
\end{cases}
\end{align*}
Then for $p\in(1,\infty)$
\begin{align*}
\left(\fint_{B_{R}}\abs{\psi}^{p\chi_{n,p}}\right)^{\frac{n-1}{n}}\leq&C_{\mathcal{SD},1}R\fint_{B_R}\abs{\nabla\abs{\psi}^{\frac{(n-1)p\chi_{n,p}}{n}}}\\
\leq&\dfrac{(n-1)p\chi_{n,p}}{n}C_{\mathcal{SD},1}R\fint_{B_R}\abs{\psi}^{\frac{(n-1)p\chi_{n,p}}{n}-1}\abs{\nabla\psi}\\
\leq&\dfrac{(n-1)p\chi_{n,p}}{n}C_{\mathcal{SD},1}R\left(\fint_{B_R}\abs{\nabla\psi}^p\right)^{\frac{1}{p}}
\left(\fint_{B_R}\abs{\psi}^{\left(\frac{(n-1)p\chi_{n,p}}{n}-1\right)\frac{p}{p-1}}\right)^{\frac{p-1}{p}}.
\end{align*}
By the definition, we know that
\begin{align*}
0<\left(\dfrac{(n-1)p\chi_{n,p}}{n}-1\right)\dfrac{p}{p-1}=\begin{cases}
\chi_{n,p},&1\leq p<n,\\
<\chi_{n,p},&p\geq n.
\end{cases}
\end{align*}
Hence
\begin{align*}
\left(\fint_{B_{R}}\abs{\psi}^{p\chi_{n,p}}\right)^{\frac{n-1}{n}}
\leq&\dfrac{(n-1)p\chi_{n,p}}{n}C_{\mathcal{SD},1}R\left(\fint_{B_R}\abs{\nabla\psi}^p\right)^{\frac{1}{p}}
\left(\fint_{B_R}\abs{\psi}^{p\chi_{n,p}}\right)^{\frac{(n-1)p\chi_{n,p}-n}{np\chi_{n,p}}},
\end{align*}
which implies
\begin{align*}
\left(\fint_{B_{R}}\abs{\psi}^{p\chi_{n,p}}\right)^{\frac{1}{p\chi_{n,p}}}
\leq&C_{\mathcal{SD},p}R\left(\fint_{B_R}\abs{\nabla\psi}^p\right)^{\frac{1}{p}},
\end{align*}
where
\begin{align*}
C_{\mathcal{SD},p}=\dfrac{(n-1)p\chi_{n,p}}{n}C_{\mathcal{SD},1}.
\end{align*}

Similarly, for the Dirichlet Poincar\'e inequalities, we have
\begin{align}\label{eq:sobolev-p}
\left(\int_{B_R}\abs{\psi}^p\right)^{\frac{1}{p}}\leq C_{\mathcal{PD},p}R\left(\int_{B_R}\abs{\nabla\psi}^p\right)^{1/p},\quad\forall\psi\in C_0^{\infty}\left(B_R\right),
\end{align}
where
\begin{align*}
C_{\mathcal{PD},p}=pC_{\mathcal{PD},1}.
\end{align*}
In particular, we have the following Sobolev inequality
\begin{align}\label{eq:sobolev}
\left(\fint_{B_{R}}\abs{\psi}^{2\chi}\right)^{\frac{1}{\chi}}\leq e^{C_{n}\left(1+\sqrt{\kappa}R\right)}R^2\fint_{B_{R}}\abs{\nabla\psi}^2, \quad \forall \psi\in C_0^{\infty}\left(B_{R}\right).
\end{align}
Here the constant $\chi=\frac{n}{n-2}$ if $n>2$, and $\chi$ can be any positive number which is larger than $1$. We also have the following Neumann Poincar\'e inequalities
\begin{align*}
\int_{B_R}\abs{\psi-\fint_{B_{R}}\psi}^p\leq e^{C_{n,p}\left(1+\sqrt{\kappa}R\right)}R^p\int_{B_R}\abs{\nabla\psi}^p,\quad\forall\psi\in C^{\infty}\left(B_R\right).
\end{align*}

\section{Gradient estimates}
Throughout this section, we always assume the condition \eqref{eq:conditions-f2} holds. That is, we always assume for some constant $\delta_0<1$, there holds
\begin{align*}
\dfrac{p-1}{n-1}\left((n+1)f(t)+2\delta_0\abs{f(t)}\right)-tf'(t)\geq0,\quad\forall t>0.
\end{align*}
The main result of this section is the global gradient estimate \autoref{thm:global-gradient-estimate}:
\begin{quotation}
Let $M^n$ be a complete and $n$-dimensional Riemannian manifold with the Ricci curvature bounded from below by a nonpositive constant $-(n-1)\kappa$. Assume for some constant $\delta_0<1$,
\begin{align*}
\dfrac{p-1}{n-1}\left((n+1)f(t)+2\delta_0\abs{f(t)}\right)-tf'(t)\geq0,\quad\forall t>0.
\end{align*}
If $p>1$ and $u$ is a positive and weak solution to
\begin{align*}
-\Delta_pu=f(u),
\end{align*}
then
\begin{align*}
\norm{\nabla\ln u}_{L^{\infty}\left(M\right)}\leq \dfrac{n-1}{p-1}\sqrt{\dfrac{\kappa}{1-\delta_0^+}}.
\end{align*}
\end{quotation}

We divided the proof into several steps.

Firstly, we state a pointwise differential inequality.

\begin{lem}\label{lem:moser}
Let $M^n$ be a complete and $n$-dimensional Riemannian manifold with the Ricci curvature bounded from below by a nonpositive constant $-(n-1)\kappa$. Assume $p>1$ and the condition \eqref{eq:conditions-f2} holds. If $u$ is a positive and weak solution to \eqref{eq:quasi-h}, then the following pointwise inequality holds for every $\lambda\geq\lambda_0\coloneqq\frac{p}{1-\delta_0^+}$
\begin{align}\label{eq:moser}
\dfrac{1}{\lambda}\mathcal{L}_{p,\ln u}\abs{\nabla\ln u}^{\lambda }\geq&\left(1-\delta_0^+\right)\dfrac{(p-1)^2}{n-1}\abs{\nabla\ln u}^{\lambda+p}-(n-1)\kappa \abs{\nabla\ln u}^{\lambda+p-2}-p(p-1)\abs{\nabla\ln u}^{\lambda+p-2}\abs{\nabla\abs{\nabla\ln u}},
\end{align}
provided $\set{\nabla u\neq0}$.
\end{lem}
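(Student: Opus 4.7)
The plan is to pass to the logarithmic substitution $v := \ln u$. Since $u \in C^\infty_{loc}(\Omega_{cr}^c)$ by the regularity cited before \autoref{lem:regular}, the computation can be carried out pointwise in $\set{\nabla u\neq 0}$. Using $\nabla u = u\nabla v$ and $\Delta_p u = u^{p-1}(\Delta_p v + (p-1)|\nabla v|^p)$, the equation $-\Delta_p u = f(u)$ reduces to
\[
\gamma + (p-1)|\nabla v|^p + F = 0,\qquad \gamma := \Delta_p v,\qquad F:= u^{1-p}f(u),
\]
and this identity will be used throughout to trade any appearance of $\Delta_p v$ or its derivatives for powers of $|\nabla v|$ and $F$.

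Next I would establish a Bochner-type expansion for $\mathcal{L}_{p,v}|\nabla v|^\lambda$. From \eqref{eq:ww} applied with $v$ in place of $u$ one finds $|\nabla v|^{p-2}A_v(\nabla|\nabla v|^\lambda) = \lambda|\nabla v|^{\lambda-p}\nabla_{\mathbf W}\mathbf W$ with $\mathbf W := |\nabla v|^{p-2}\nabla v$, whence
\[
\tfrac{1}{\lambda}\mathcal{L}_{p,v}|\nabla v|^\lambda = \hin{\nabla|\nabla v|^{\lambda-p}}{\nabla_{\mathbf W}\mathbf W} + \tfrac{1}{p}|\nabla v|^{\lambda-p}\mathcal{L}_{p,v}|\nabla v|^p.
\]
Feeding this into the Bochner formula \eqref{eq:bochner-p} and working in a local orthonormal frame with $e_1 = \nabla v/|\nabla v|$ (so that $A_v$ has eigenvalue $p-1$ along $e_1$ and $1$ along $e_k$, $k\geq 2$), a direct computation yields
\begin{align*}
\tfrac{1}{\lambda}\mathcal{L}_{p,v}|\nabla v|^\lambda =\;&|\nabla v|^{\lambda+p-4}\!\left[(\lambda-1)(p-1)v_{11}^2 + (\lambda+p-2)\!\!\sum_{k\geq 2}\!v_{1k}^2 + \!\!\sum_{i,j\geq 2}\!v_{ij}^2\right]\\
&+ |\nabla v|^{\lambda-p}\hin{\nabla\gamma}{|\nabla v|^{p-2}\nabla v} + |\nabla v|^{\lambda+p-4}\mathrm{Ric}(\nabla v,\nabla v).
\end{align*}
Computing $\nabla\gamma$ from the equation for $v$ reduces the middle term to $-p(p-1)v_{11}|\nabla v|^{\lambda+p-2} + [(p-1)F - u^{2-p}f'(u)]|\nabla v|^\lambda$.

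The algebraic heart of the argument is the refined Cauchy--Schwarz $\sum_{i,j\geq 2}v_{ij}^2 \geq (n-1)^{-1}(\sum_{k\geq 2}v_{kk})^2$ applied to the $(n-1)\times(n-1)$ off-$\nabla v$ block of the Hessian, combined with $\sum_{k\geq 2}v_{kk} = |\nabla v|^{2-p}\gamma - (p-1)v_{11}$ (which is just the expansion $\Delta_p v = |\nabla v|^{p-2}[(p-1)v_{11} + \sum_{k\geq 2}v_{kk}]$). Substituting $\gamma = -F - (p-1)|\nabla v|^p$ everywhere and invoking \eqref{eq:conditions-f2} in the form $-u^{2-p}f'(u) \geq -\tfrac{p-1}{n-1}((n+1)F + 2\delta_0|F|)$, one checks that the linear-in-$F$ terms cancel exactly (the coefficient is $\tfrac{2(p-1)}{n-1} + (p-1) - \tfrac{(n+1)(p-1)}{n-1} = 0$). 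What survives is the quadratic form in $(v_{11},F)$
\[
Q := \!\left[(\lambda-1)(p-1) + \tfrac{(p-1)^2}{n-1}\right]\!v_{11}^2|\nabla v|^{\lambda+p-4} + \tfrac{F^2|\nabla v|^{\lambda-p}}{n-1} + \tfrac{2(p-1)v_{11}F|\nabla v|^{\lambda-2}}{n-1},
\]
together with the desired leading term $\tfrac{(p-1)^2}{n-1}|\nabla v|^{\lambda+p}$, a linear residual of the form $(p-1)\tfrac{2(p-1)-p(n-1)}{n-1}v_{11}|\nabla v|^{\lambda+p-2}$, the error $-\tfrac{2(p-1)\delta_0}{n-1}|F||\nabla v|^\lambda$, and the Ricci contribution $\geq -(n-1)\kappa|\nabla v|^{\lambda+p-2}$.

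The proof concludes with two absorptions. For the $|F|$-error, when $\delta_0\leq 0$ its sign is favourable and $Q\geq 0$ whenever $\lambda\geq 1$, yielding the coefficient $(p-1)^2/(n-1)$; when $\delta_0 > 0$ I would apply Young's inequality $|F||\nabla v|^\lambda \leq \tfrac{F^2|\nabla v|^{\lambda-p}}{2(p-1)} + \tfrac{(p-1)|\nabla v|^{\lambda+p}}{2}$, which diverts a $\delta_0$-fraction of the $F^2$ reservoir in $Q$ and costs exactly $\tfrac{(p-1)^2\delta_0}{n-1}$ on the $|\nabla v|^{\lambda+p}$ coefficient, producing $(1-\delta_0^+)\tfrac{(p-1)^2}{n-1}$; a discriminant check shows that the diminished $Q$ stays positive semi-definite precisely when $\lambda \geq 1 + \tfrac{\delta_0(p-1)}{(1-\delta_0)(n-1)}$, which is implied by $\lambda\geq\lambda_0 = p/(1-\delta_0^+)$ (a short verification using $p>1,\,n\geq 2$). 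For the linear-in-$v_{11}$ residual, the pointwise bound $|v_{11}|\leq |\nabla|\nabla v||$ combined with the elementary inequality $|2(p-1)-p(n-1)|/(n-1)\leq p$ (valid for $p\geq 1,\,n\geq 2$) absorbs it into $-p(p-1)|\nabla v|^{\lambda+p-2}|\nabla|\nabla v||$, while the Ricci lower bound supplies $-(n-1)\kappa|\nabla v|^{\lambda+p-2}$. The main obstacle is the tight bookkeeping: the exact cancellation of the $F|\nabla v|^\lambda$ cross-terms, the calibration $\tau=1/(p-1)$ in Young's inequality, and the elementary bound on the $v_{11}$-coefficient must all coincide to produce the sharp constants displayed in \eqref{eq:moser}.
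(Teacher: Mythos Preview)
Your proof is correct and follows essentially the same route as the paper: the Bochner formula for $\mathcal L_{p,v}$, the refined Kato inequality $\sum_{i,j\geq 2}v_{ij}^2 \geq (n-1)^{-1}\bigl(\sum_{k\geq 2}v_{kk}\bigr)^2$ in an adapted frame, substitution of the equation, and a Young/perfect-square absorption of the $\delta_0|f|$-term. The paper organizes the computation slightly differently---first establishing the differential inequality for $|\nabla v|^p$ and only then raising to the $\lambda/p$-th power---but the constants, the threshold $\lambda_0=p/(1-\delta_0^+)$, and the bound $|v_{11}|\leq|\nabla|\nabla v||$ for the residual linear term all arise in exactly the same way.
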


\begin{proof}
It is well known that $u$ is $C^{\infty}$ in $\set{\nabla u\neq0}$ (see \cite{DiBenedetto, T1984}). We consider new functions
$$w=\ln u\quad \mbox{and} \quad F=\abs{\nabla w}^{p}.$$
Consider the following vector field
\begin{align*}
\mathbf{W}=\abs{\nabla w}^{p-2}\nabla w
\end{align*}
which is  defined in $\set{\nabla u\neq0}$. Applying the Bochner formula \eqref{eq:bochner-p} to the function $w$, we have
\begin{align}\label{eq:bochner-w}
\dfrac{1}{p}\mathcal{L}_{p,w}\abs{\nabla w}^p=\hin{\nabla_{\nabla_{e_i}\mathbf{W}}\mathbf{W}}{e_i}+\hin{\nabla\Delta_pw}{\mathbf{W}}+\mathrm{Ric}\left(\mathbf{W},\mathbf{W}\right).
\end{align}

We claim that the following Kato inequality holds true
\begin{align}\label{eq:kato-w}
\hin{\nabla_{\nabla_{e_i}\mathbf{W}}\mathbf{W}}{e_i}\geq&\dfrac{1}{n-1}\abs{\Delta_pw}^2-\dfrac{2}{(n-1)p}F^{-1}\hin{A_w\left(\nabla F\right)}{\mathbf{W}}\Delta_pw+\dfrac{n(p-1)}{(n-1)p^2}F^{-2/p}\hin{A_w\left(\nabla F\right)}{\nabla F}.
\end{align}
To see this, assume $\nabla w(x_0)\neq0$. Choose a local orthonormal tangent frames $\set{e_i}$ such that at $x_0$
\begin{align*}
\nabla w=w_1e_1.
\end{align*}
Then at the considering point $x_0$, it follows from \eqref{eq:linear-1}
\begin{align*}
\hin{\nabla_{\nabla_{e_i}\mathbf{W}}\mathbf{W}}{e_i}=&\left((p-1)w_1^{p-2}w_{11}\right)^2+2(p-1)\sum_{j=2}^nw_1^{2p-4}w_{1j}^2
+\sum_{i,j=2}^nw_1^{2p-4}w_{ij}^2\\
\geq&\left((p-1)w_1^{p-2}w_{11}\right)^2+2(p-1)\sum_{j=2}^nw_1^{2p-4}w_{1j}^2+\dfrac{1}{n-1}\sum_{j=2}^n\left(w_1^{p-2}w_{jj}\right)^2\\
=&\left((p-1)w_1^{p-2}w_{11}\right)^2+2(p-1)\sum_{j=2}^nw_1^{2p-4}w_{1j}^2+\dfrac{1}{n-1}\sum_{j=2}^n\left(\Delta_pw-(p-1)w_1^{p-2}w_{11}\right)^2\\
=&\dfrac{1}{n-1}\abs{\Delta_pw}^2+2(p-1)\sum_{j=2}^nw_1^{2p-4}w_{1j}^2+\dfrac{n(p-1)^2}{n-1}w_{1}^{2p-4}w_{11}^2
-\dfrac{2(p-1)}{n-1}w_1^{p-2}w_{11}\Delta_pw\\
\geq&\dfrac{1}{n-1}\abs{\Delta_pw}^2+\dfrac{n}{n-1}\left((p-1)\sum_{j=2}^nw_1^{2p-4}w_{1j}^2+(p-1)^2w_{1}^{2p-4}w_{11}^2\right)
-\dfrac{2(p-1)}{n-1}w_1^{p-2}w_{11}\Delta_pw.
\end{align*}
Since
$$\nabla F=pw_1^{p-1}w_{1j}e_j,$$
we have
\begin{align}\label{awff}
&\hin{A_w\left(\nabla F\right)}{\nabla F}=p^2(p-1)w_1^{2p-2}\left((p-1)w_{11}^2+\sum_{j=2}^nw_{1j}^2\right),
\end{align}
and
\begin{align}\label{awfw}
\hin{A_{w}\left(\nabla F\right)}{\mathbf{W}}=p(p-1)w_{1}^{2p-2}w_{11}.
\end{align}
It follows
\begin{align*}
\hin{\nabla_{\nabla_{e_i}\mathbf{W}}\mathbf{W}}{e_i}\geq&\dfrac{1}{n-1}\abs{\Delta_pw}^2+\dfrac{n(p-1)}{(n-1)p^2}F^{-2/p}\hin{A_w\left(\nabla F\right)}{\nabla F}-\dfrac{2}{(n-1)p}F^{-1}\hin{A_{w}\left(\nabla F\right)}{\mathbf{W}}\Delta_pw.
\end{align*}

Notice that
\begin{align*}
A_{w}\left(\nabla w\right)=(p-1)\nabla w.
\end{align*}
Inserting the above Kato inequality \eqref{eq:kato-w} into the Bochner formula \eqref{eq:bochner-w}, we obtain
\begin{align*}
\dfrac{1}{p}\mathcal{L}_{p,w}F\geq&\dfrac{1}{n-1}\abs{\Delta_pw}^2-\dfrac{2}{(n-1)p}F^{-1}\hin{A_w\left(\nabla F\right)}{\mathbf{W}}\Delta_pw+\dfrac{n(p-1)}{(n-1)p^2}F^{-2/p}\hin{A_w\left(\nabla F\right)}{\nabla F}\\
&+(1-p)\hin{\nabla F}{\mathbf{W}}-\left((1-p)u^{1-p}f(u)+u^{2-p}f'(u)\right)F -(n-1)\kappa F^{2-2/p}\\
=&\dfrac{1}{n-1}\left((1-p)F-u^{1-p}f(u)\right)^2-\dfrac{2}{(n-1)p}F^{-1}\hin{A_w\left(\nabla F\right)}{\mathbf{W}}\left((1-p)F-u^{1-p}f(u)\right)\\
&+\dfrac{n(p-1)}{(n-1)p^2}F^{-2/p}\hin{A_w\left(\nabla F\right)}{\nabla F}-\hin{A_w\left(\nabla F\right)}{\mathbf{W}}-\left((1-p)u^{1-p}f(u)+u^{2-p}f'(u)\right)F\\
&-(n-1)\kappa F^{2-2/p}\\
=&\dfrac{(p-1)^2F^2}{n-1}+\dfrac{u^{2-2p}f(u)^2}{n-1}+\dfrac{2}{(n-1)p}u^{1-p}f(u)F^{-1}\hin{A_w\left(\nabla F\right)}{\mathbf{W}}+\dfrac{n(p-1)}{(n-1)p^2}F^{-2/p}\hin{A_w\left(\nabla F\right)}{\nabla F}\\
&-\dfrac{(n-3)p+2}{(n-1)p}\hin{A_w\left(\nabla F\right)}{\mathbf{W}}+\left(\dfrac{(n+1)(p-1)}{n-1}u^{1-p}f(u)-u^{2-p}f'(u)\right)F-(n-1)\kappa F^{2-2/p}.
\end{align*}
By \eqref{awff} and \eqref{awfw}, we have
\begin{align*}
|\langle A_w(\nabla F), \mathbf{W}\rangle|\leq \sqrt{p-1}F^{1-1/p}\sqrt{\langle A_w(\nabla F), \nabla F\rangle}.
\end{align*}
Hence, it follows from the above two inequalities that
\begin{align}
\label{eq:4.6}
\begin{split}
\dfrac{1}{p}\mathcal{L}_{p,w}F \geq&\dfrac{(p-1)^2F^2}{n-1}+\dfrac{u^{2-2p}f(u)^2}{n-1}-\dfrac{2\sqrt{p-1}}{(n-1)p}u^{1-p}\abs{f(u)}F^{-1/p}\sqrt{\hin{A_w\left(\nabla F\right)}{\nabla F}}\\
&+\dfrac{n(p-1)}{(n-1)p^2}F^{-2/p}\hin{A_w\left(\nabla F\right)}{\nabla F}-\dfrac{\abs{(n-3)p+2}\sqrt{p-1}}{(n-1)p}F^{1-1/p}\sqrt{\hin{A_w\left(\nabla F\right)}{\nabla F}}
\\
&+\left(\dfrac{(n+1)(p-1)}{n-1}u^{1-p}f(u)-u^{2-p}f'(u)\right)F -(n-1)\kappa F^{2-2/p}.
\end{split}
\end{align}
We note that for any $\delta\in[0,1)$ there holds
$$
 \delta \dfrac{(p-1)^2F^2}{n-1}+ \delta \dfrac{u^{2-2p}f(u)^2}{n-1}-\dfrac{2\delta(p-1)}{n-1}u^{1-p}\abs{f(u)}F\geq 0.
 $$
Now, inserting the above inequality into \eqref{eq:4.6} we get
\begin{align*}
\dfrac{1}{p}\mathcal{L}_{p,w}F \geq&\left(1-\delta\right)\dfrac{(p-1)^2F^2}{n-1}+\left(1-\delta\right)\dfrac{u^{2-2p}f(u)^2}{n-1}+\dfrac{2\delta(p-1)}{n-1}u^{1-p}\abs{f(u)}F\\
&-\dfrac{2\sqrt{p-1}}{(n-1)p}u^{1-p}\abs{f(u)}F^{-1/p}\sqrt{\hin{A_w\left(\nabla F\right)}{\nabla F}}+\dfrac{n(p-1)}{(n-1)p^2}F^{-2/p}\hin{A_w\left(\nabla F\right)}{\nabla F}\\
&-\dfrac{\abs{(n-3)p+2}\sqrt{p-1}}{(n-1)p}F^{1-1/p}\sqrt{\hin{A_w\left(\nabla F\right)}{\nabla F}}+\left(\dfrac{(n+1)(p-1)}{n-1}u^{1-p}f(u)-u^{2-p}f'(u)\right)F\\
&-(n-1)\kappa F^{2-2/p}\\
\geq&\left(1-\delta\right)\dfrac{(p-1)^2F^2}{n-1}+\dfrac{(n(1-\delta)-1)(p-1)}{(1-\delta)(n-1)p^2}F^{-2/p}\hin{A_w\left(\nabla F\right)}{\nabla F}\\
&-\dfrac{\abs{(n-3)p+2}\sqrt{p-1}}{(n-1)p}F^{1-1/p}\sqrt{\hin{A_w\left(\nabla F\right)}{\nabla F}}\\
&+\left(\dfrac{p-1}{n-1}\left((n+1)f(u)+2\delta\abs{f(u)}\right)-uf'(u)\right)u^{1-p}F -(n-1)\kappa F^{2-2/p}
\end{align*}
for every $\delta\in[0,1)$. Hence, noting that there hold for $\delta=\delta_0^+=\max\set{0,\delta_0}\in[0,1)$
\begin{align*}
\dfrac{(n(1-\delta_0^+)-1)(p-1)}{(1-\delta_0^+)(n-1)p^2}=\dfrac{((n-1)(1-\delta_0^+)-\delta_0^+)(p-1)}{(1-\delta_0^+)(n-1)p^2}
\geq -\dfrac{\delta_0^+}{(1-\delta_0^+)p},
\end{align*}
and
$$ \dfrac{\abs{(n-3)p+2}}{(n-1)p}\leq 1, \quad\mbox{for}\,\, p>1,$$
we conclude
\begin{align}\label{eq:F-1}
\dfrac{1}{p}\mathcal{L}_{p,w}F\geq&\left(1-\delta_0^+\right)\dfrac{(p-1)^2F^2}{n-1}-\dfrac{\delta_0^+}{(1-\delta_0^+)p}F^{-2/p}\hin{A_w\left(\nabla F\right)}{\nabla F}-(p-1)F^{1-1/p}\abs{\nabla F}-(n-1)\kappa F^{2-2/p}.
\end{align}

For every real number $\lambda>0$, from \eqref{eq:F-1} we have
\begin{align*}
\dfrac{1}{\lambda}\mathcal{L}_{p,w}F^{\lambda}=&F^{\lambda-1}\mathcal{L}_{p,w}F+\left(\lambda-1\right)F^{\lambda-1-2/p}\hin{A_w\left(\nabla F\right)}{\nabla F}\\
\geq&pF^{\lambda-1}\left(\left(1-\delta_0^+\right)\dfrac{(p-1)^2F^2}{n-1}-\dfrac{\delta_0^+}{(1-\delta_0^+)p}F^{-2/p}\hin{A_w\left(\nabla F\right)}{\nabla F}-(p-1)F^{1-1/p}\abs{\nabla F}\right.\\
&\left.-(n-1)\kappa F^{2-2/p}\right)+\left(\lambda-1\right)F^{\lambda-1-2/p}\hin{A_w\left(\nabla F\right)}{\nabla F}\\
=&\left(1-\delta_0^+\right)\dfrac{(p-1)^2p}{n-1}F^{\lambda+1}+\left(\lambda-\dfrac{1}{1-\delta_0^+}\right)F^{\lambda-1-2/p}\hin{A_w\left(\nabla F\right)}{\nabla F}\\
&-p(p-1)F^{\lambda-1/p}\abs{\nabla F}-p(n-1)\kappa F^{\lambda+1-2/p}.
\end{align*}
Therefore, for every $\lambda\geq \frac{1}{1-\delta_0^+}$, we get
\begin{align*}
\dfrac{1}{\lambda}\mathcal{L}_{p,w}F^{\lambda}
\geq&\left(1-\delta_0^+\right)\dfrac{(p-1)^2p}{n-1}F^{\lambda+1}-p(n-1)\kappa F^{\lambda+1-2/p}-p(p-1)F^{\lambda-1/p}\abs{\nabla F}.
\end{align*}
One picks $\lambda_0=\frac{p}{1-\delta_0^+}$ to  complete the proof.
\end{proof}

Secondly, we state the following local $L^q$-boundedness.
\begin{lem}\label{lem:lq}
Let $M^n$ be a complete and $n$-dimensional Riemannian manifold with the Ricci curvature bounded from below by a nonpositive constant $-(n-1)\kappa$. Assume for some constant $\delta_0<1$,
\begin{align*}
\dfrac{p-1}{n-1}\left((n+1)f(u)+2\delta_0\abs{f(u)}\right)-uf'(u)\geq0.
\end{align*}
If $p>1$ and $u$ is a positive and weak solution to \eqref{eq:quasi-h}, then there exists a positive constant $q_0$ depending only on $n,p$ and $\delta_0^+$ such that for all $q\geq q_0$
\begin{align}\label{eq:lq}
\norm{\nabla\ln u}_{L^{q}\left(B_{R/2}\right)}\leq \dfrac{C_{n,p}}{\sqrt{1-\delta_0}}\left(\dfrac{q}{R}+\sqrt{\kappa}\right)V_{R}^{\frac{1}{q}},\quad\forall R>0.
\end{align}
Here $C_{n,p}$ is a positive constant which is depending only on $n$ and $p$.
\end{lem}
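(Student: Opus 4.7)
Write $w=\ln u$ and $v=|\nabla w|$. My intention is to convert the pointwise Moser-type inequality \eqref{eq:moser} into an integral inequality by testing it against the weight $\eta^{s}v^{\lambda\beta}$ for a cutoff $\eta\in C_c^{\infty}(B_R)$; integration by parts on $\mathcal L_{p,w}$, Young's inequality (to absorb the ``bad'' gradient and $v^{q}$ contributions), and one application of H\"older's inequality should suffice to close the estimate. Moser iteration is not needed, since the exponent $q=\lambda(\beta+1)+p$ can be tuned directly by varying $\beta\ge 0$.

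\textbf{Outline of the argument.} Fix $\lambda=\lambda_0=p/(1-\delta_0^+)$ and take $\eta$ with $\eta\equiv 1$ on $B_{R/2}$, $|\nabla\eta|\le 4/R$. Expanding
\begin{equation*}
\nabla(\eta^{s}v^{\lambda\beta})=s\eta^{s-1}v^{\lambda\beta}\nabla\eta+\lambda\beta\,\eta^{s}v^{\lambda\beta-1}\nabla v,
\end{equation*}
and using the ellipticity $\langle A_{w}(\nabla v),\nabla v\rangle\ge\min(1,p-1)|\nabla v|^{2}$, the weak form of \eqref{eq:moser} applied to the test function $\eta^{s}v^{\lambda\beta}$ yields, with $q:=\lambda(\beta+1)+p$ and $c_0=(p-1)^{2}/(n-1)$,
\begin{equation*}
c_{0}(1-\delta_{0}^{+})\int\eta^{s}v^{q}+c_{1}\lambda\beta\int\eta^{s}v^{q-4}|\nabla v|^{2}\le c_{2}s\int\eta^{s-1}|\nabla\eta|\,v^{q-3}|\nabla v|+(n-1)\kappa\int\eta^{s}v^{q-2}+c_{3}\int\eta^{s}v^{q-2}|\nabla v|,
\end{equation*}
with $c_{i}=c_{i}(n,p)$. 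Young's inequality, splitting both $v^{q-3}|\nabla v|$ and $v^{q-2}|\nabla v|$ with $v^{(q-4)/2}|\nabla v|$ as one factor, converts the right side into a fraction of $\eta^{s}v^{q-4}|\nabla v|^{2}$, a fraction of $\eta^{s}v^{q}$, plus the harmless pieces $\eta^{s-2}|\nabla\eta|^{2}v^{q-2}$ and $\kappa\,\eta^{s}v^{q-2}$. Absorbing these fractions on the left is possible precisely when $\lambda\beta\ge C'_{n,p}/(1-\delta_{0}^{+})$, that is, when $q\ge q_{0}(n,p,\delta_{0}^{+})$; what is left after absorption is
\begin{equation*}
(1-\delta_{0}^{+})\int\eta^{s}v^{q}\le C_{n,p}\Big(\tfrac{s^{2}}{\lambda\beta R^{2}}+\kappa\Big)\int\eta^{s-2}v^{q-2}.
\end{equation*}
Choose $s=q$ and apply H\"older with conjugate exponents $q/(q-2)$ and $q/2$ to obtain $\int\eta^{q-2}v^{q-2}\le V_{R}^{2/q}\bigl(\int\eta^{q}v^{q}\bigr)^{(q-2)/q}$. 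Setting $I=\int\eta^{q}v^{q}$ this collapses to $(1-\delta_{0}^{+})I^{2/q}\le C_{n,p}(q/R^{2}+\kappa)V_{R}^{2/q}$ (using $\lambda\beta\ge q/2$ for large $q$), and raising to $q/2$, taking $q$-th roots, and bounding $\sqrt{q/R^{2}+\kappa}\le q/R+\sqrt{\kappa}$ yields \eqref{eq:lq}.

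\textbf{Main obstacle.} Inequality \eqref{eq:moser} is only pointwise valid on $\{\nabla u\ne0\}$, and $v$ need not be smooth across the critical set, so the integration by parts above must be justified globally. We use \autoref{lem:regular} (which gives $\mathcal U,\mathcal X\in L^{2}_{loc}$) and the fact that the critical set has Lebesgue measure zero (\cite{Antonini2023}): approximating $v$ by $v_{\varepsilon}=(|\nabla w|^{2}+\varepsilon)^{1/2}$ and sending $\varepsilon\downarrow 0$ carries the integrated identity across the critical set to all of $B_R$. Beyond this technical point, the genuine balancing act is the simultaneous absorption of the $v^{q-4}|\nabla v|^{2}$ and $v^{q}$ pieces that both arise from the $-p(p-1)v^{\lambda+p-2}|\nabla v|$ term in \eqref{eq:moser}; this is exactly what forces the threshold $q\ge q_{0}(n,p,\delta_{0}^{+})$ asserted by the lemma.
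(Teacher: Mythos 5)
Your proposal is correct and follows essentially the same route as the paper's proof: both test the pointwise inequality \eqref{eq:moser} against a cutoff times a power of $\abs{\nabla\ln u}$, absorb the gradient and $v^{q}$ contributions by Young's inequality (which is exactly what forces the threshold $q\geq q_0(n,p,\delta_0^+)$), and then close without any iteration by raising the cutoff to a power comparable to $q$ and applying H\"older against the volume $V_R$. The only differences are cosmetic bookkeeping — you test directly with $\eta^{q}v^{\lambda\beta}$ whereas the paper tests with $\eta^{2}H^{t}$ and later replaces $\eta$ by $\eta^{t+1}$ — and your regularization remark addresses the same regularity facts the paper invokes.
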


\begin{proof}
Let $\lambda_0=\frac{p}{1-\delta_0^+}$ be the constant in \autoref{lem:moser}. Consider the function
\begin{align*}
H=\abs{\nabla\ln u}.
\end{align*}
It is easy to see that it follows from \eqref{eq:moser}
\begin{align*}
\dfrac{1}{\lambda_0}\mathcal{L}_{p,w}H^{\lambda_0}\geq\dfrac{(p-1)^2\left(1-\delta_0^+\right)}{n-1}H^{\lambda_0+p}-(n-1)\kappa H^{\lambda_0+p-2}-p(p-1)H^{\lambda_0+p-2}\abs{\nabla H},
\end{align*}
provided $H\neq0$. As a consequence,  for any nonnegative cutoff function  $\eta$ and positive number $t$, we consider the test function $\eta^2H^{t}$ and obtain
\begin{align}\label{eq:local-lq0}
&\dfrac{(p-1)^2\left(1-\delta_0^+\right)}{n-1}\int_{M}H^{\lambda_0+p+t}\eta^2-(n-1)\kappa \int_MH^{\lambda_0+p+t-2}\eta^2\nonumber\\
\leq &p(p-1)\int_MH^{\lambda_0+p+t-2}\abs{\nabla H}\eta^2+\dfrac{1}{\lambda_0}\int_{M}\eta^2H^{t}\mathcal{L}_{p,w}H^{\lambda_0}.
\end{align}
Here we have used the following well known fact about the regularity of $u$ (see \cite{T1984} or \cite{Antonini2023}):
\begin{align*}
u\in C_{loc}^{\infty}\left(M\setminus Z\right)\cap W^{2,2}_{loc}\left(M\setminus Z\right)\cap C^{1,\beta}_{loc}\left(M\right),
\end{align*}
where $Z=\set{x\in M:\nabla u = 0}$ is the critical point set of $u$ on $M$, and for each compact domain $\Omega\Subset M$
\begin{align*}
\abs{\nabla u}^{p-2}\nabla^2u\in L^2\left(\Omega\setminus \Omega_{cr}\right)
\end{align*}
where $\Omega_{cr}=\set{x\in\Omega:\nabla u = 0}$ is the set of critical points of $u$ on $\Omega$.

We need to derive some estimates for the terms on the right hand side of \eqref{eq:local-lq0}. Notice that the eigenvalues of $A_{u}$ are bounded from below by $\min\set{1,p-1}\geq\frac{p-1}{p}$ and above by $\max\set{1,p-1}\leq p$. On the one hand,
\begin{align*}
p(p-1)\int_MH^{\lambda_0+p+t-2}\abs{\nabla H}\eta^2\leq&\dfrac{2p\sqrt{p(p-1)}}{p+\lambda_0+t-2}\int_MH^{\frac{\lambda_0+p+t}{2}}\sqrt{\hin{A_w\left(\nabla H^{\frac{p+\lambda_0+t-2}{2}}\right)}{\nabla H^{\frac{p+\lambda_0+t-2}{2}}}}\eta^2\\
\leq&\dfrac{2p\sqrt{p(p-1)}}{p+\lambda_0+t-2}\left(\int_MH^{\lambda_0+p+t}\eta^2\right)^{1/2}\left(\int_M\hin{A_w\left(\nabla H^{\frac{p+\lambda_0+t-2}{2}}\right)}{\nabla H^{\frac{p+\lambda_0+t-2}{2}}}\eta^2\right)^{1/2}\\
\leq&\dfrac{1}{p+\lambda_0+t-2}\int_M\hin{A_w\left(\nabla H^{\frac{p+\lambda_0+t-2}{2}}\right)}{\nabla H^{\frac{p+\lambda_0+t-2}{2}}}\eta^2\\
&+\dfrac{p^3(p-1)}{p+\lambda_0+t-2}\int_MH^{\lambda_0+p+t}\eta^2.
\end{align*}
On the other hand,
\begin{align*}
\dfrac{1}{\lambda_0}\int_{M}\eta^2H^{t}\mathcal{L}_{p,w}H^{\lambda_0}=&-\int_M H^{p+\lambda_0-3}\hin{A_{w}\left(\nabla H\right)}{2\eta H^t\nabla\eta+t\eta^2H^{t-1}\nabla H}\\
=&-t\int_M H^{p+\lambda_0-1}H_a^{t-1}\hin{A_{w}\left(\nabla H\right)}{\nabla H}\eta^2-2\int_M H^{p+\lambda_0+t-3}\hin{A_{w}\left(\nabla H\right)}{\nabla\eta}\eta\\
=&-\dfrac{4t}{\left(p+\lambda_0+t-2\right)^2}\int_M \hin{A_{w}\left(\nabla H^{\frac{p+\lambda_0+t-2}{2}}\right)}{\nabla H^{\frac{p+\lambda_0+t-2}{2}}}\eta^2\\
&-\dfrac{4}{p+\lambda_0+t-2}\int_M\hin{A_w\left(H^{\frac{p+\lambda_0+t-2}{2}}\right)}{\nabla\eta}H^{\frac{p+\lambda_0+t-2}{2}}\eta.
\end{align*}
Integration by parts, we get
\begin{align*}
\dfrac{1}{\lambda_0}\int_{M}\eta^2H^{t}\mathcal{L}_{p,w}H^{\lambda_0}=&-\dfrac{4t}{\left(p+\lambda_0+t-2\right)^2}\int_M \hin{A_{w}\left(\nabla H^{\frac{p+\lambda_0+t-2}{2}}\right)}{\nabla H^{\frac{p+\lambda_0+t-2}{2}}}\eta^2\\
&+\dfrac{2}{p+\lambda_0+t-2}\int_M \hin{A_{w}\left(\nabla H^{\frac{p+\lambda_0+t-2}{2}}\right)}{\nabla H^{\frac{p+\lambda_0+t-2}{2}}}\eta^2\\
&+\dfrac{2}{p+\lambda_0+t-2}\int_M \hin{A_{w}\left(\nabla \eta\right)}{\nabla \eta}H^{p+\lambda_0+t-2}\\
&-\dfrac{2}{p+\lambda_0+t-2}\int_M \hin{A_{w}\left(\nabla \left(\eta H^{\frac{p+\lambda_0+t-2}{2}}\right)\right)}{\nabla \left(\eta H^{\frac{p+\lambda_0+t-2}{2}}\right)}\\
=&-\dfrac{2\left(t+2-p-\lambda_0\right)}{\left(p+\lambda_0+t-2\right)^2}\int_M \hin{A_{w}\left(\nabla H^{\frac{p+\lambda_0+t-2}{2}}\right)}{\nabla H^{\frac{p+\lambda_0+t-2}{2}}}\eta^2\\
&+\dfrac{2}{p+\lambda_0+t-2}\int_M \hin{A_{w}\left(\nabla \eta\right)}{\nabla \eta}H^{p+\lambda_0+t-2}\\
&-\dfrac{2}{p+\lambda_0+t-2}\int_M \hin{A_{w}\left(\nabla \left(\eta H^{\frac{p+\lambda_0+t-2}{2}}\right)\right)}{\nabla \left(\eta H^{\frac{p+\lambda_0+t-2}{2}}\right)}.
\end{align*}
Thus
\begin{align*}
&\dfrac{1}{\lambda_0}\int_{M}\eta^2H^{t}\mathcal{L}_{p,w}H^{\lambda_0}+p(p-1)\int_MH^{\lambda_0+p+t-2}\abs{\nabla H}\eta^2\\
\leq&-\dfrac{2\left(t+2-p-\lambda_0\right)}{\left(p+\lambda_0+t-2\right)^2}\int_M \hin{A_{w}\left(\nabla H^{\frac{p+\lambda_0+t-2}{2}}\right)}{\nabla H^{\frac{p+\lambda_0+t-2}{2}}}\eta^2+\dfrac{2}{p+\lambda_0+t-2}\int_M \hin{A_{w}\left(\nabla \eta\right)}{\nabla \eta}H^{p+\lambda_0+t-2}\\
&-\dfrac{2}{p+\lambda_0+t-2}\int_M \hin{A_{w}\left(\nabla \left(\eta H^{\frac{p+\lambda_0+t-2}{2}}\right)\right)}{\nabla \left(\eta H^{\frac{p+\lambda_0+t-2}{2}}\right)}\\
&+\dfrac{1}{p+\lambda_0+t-2}\int_M\hin{A_w\left(\nabla H^{\frac{p+\lambda_0+t-2}{2}}\right)}{\nabla H^{\frac{p+\lambda_0+t-2}{2}}}\eta^2+\dfrac{p^3(p-1)}{p+\lambda_0+t-2}\int_MH^{\frac{p+\lambda_0+t-2}{2}}\eta^2\\
=&-\dfrac{t-3\left(p+\lambda_0-2\right)}{\left(p+\lambda_0+t-2\right)^2}\int_M \hin{A_{w}\left(\nabla H^{\frac{p+\lambda_0+t-2}{2}}\right)}{\nabla H^{\frac{p+\lambda_0+t-2}{2}}}\eta^2+\dfrac{2}{p+\lambda_0+t-2}\int_M \hin{A_{w}\left(\nabla \eta\right)}{\nabla \eta}H^{p+\lambda_0+t-2}\\
&-\dfrac{2}{p+\lambda_0+t-2}\int_M \hin{A_{w}\left(\nabla \left(\eta H^{\frac{p+\lambda_0+t-2}{2}}\right)\right)}{\nabla \left(\eta H^{\frac{p+\lambda_0+t-2}{2}}\right)}+\dfrac{p^3(p-1)}{p+\lambda_0+t-2}\int_M H^{p+\lambda_0+t}\eta^2.
\end{align*}
Taking
\begin{align*}
t_0=3(p+\lambda_0-2)+\dfrac{2(p-1)p^3}{\epsilon_0}\quad\mbox{and}\quad \epsilon_0=\dfrac{(p-1)^2\left(1-\delta_0^+\right)}{n-1},
\end{align*}
we get for all $t\geq t_0$
\begin{align}\label{eq:local-t_0}
\begin{split}
&\dfrac{1}{\lambda_0}\int_{M}\eta^2H^{t}\mathcal{L}_{p,w}H^{\lambda_0}+p(p-1)\int_MH^{\lambda_0+p+t-2}\abs{\nabla H}\eta^2\\
\leq&\dfrac{2}{p+\lambda_0+t-2}\int_M \hin{A_{w}\left(\nabla \eta\right)}{\nabla \eta}H^{p+\lambda_0+t-2}-\dfrac{2}{p+\lambda_0+t-2}\int_M \hin{A_{w}\left(\nabla \left(\eta H^{\frac{p+\lambda_0+t-2}{2}}\right)\right)}{\nabla \left(\eta H^{\frac{p+\lambda_0+t-2}{2}}\right)}\\
&+\dfrac{\epsilon_0}{2}\int_M H^{p+\lambda_0+t}\eta^2.
\end{split}
\end{align}
Combing \eqref{eq:local-lq0} with \eqref{eq:local-t_0}, we obtain
\begin{align*}
\dfrac{\epsilon_0}{2}\int_{M}H^{\lambda_0+p+t}\eta^2\leq& \dfrac{2}{p+\lambda_0+t-2}\int_M \hin{A_{w}\left(\nabla \eta\right)}{\nabla \eta}H^{p+\lambda_0+t-2}\\
&-\dfrac{2}{p+\lambda_0+t-2}\int_M \hin{A_{w}\left(\nabla \left(\eta H^{\frac{p+\lambda_0+t-2}{2}}\right)\right)}{\nabla \left(\eta H^{\frac{p+\lambda_0+t-2}{2}}\right)}.
\end{align*}
Moreover, we get
\begin{align*}
&\dfrac{\epsilon_0}{2}\int_{M}H^{\lambda_0+p+t}\eta^2+\dfrac{2(p-1)}{p(p+\lambda_0+t-2)}\int_M \abs{\nabla \left(\eta H^{\frac{p+\lambda_0+t-2}{2}}\right)}^2\\
\leq& \dfrac{2p}{p+\lambda_0+t-2}\int_M \abs{\nabla \eta}^2H^{p+\lambda_0+t-2}+(n-1)\kappa \int_MH^{\lambda_0+p+t-2}\eta^2.
\end{align*}
Hence, for all
$$t\geq t_1\coloneqq \frac{p+\lambda_0+t_0-2}{2},$$
we have the following integer inequality
\begin{align}\label{eq:moser-1}
\dfrac{\epsilon_0}{2}\int_{M}H^{2t+2}\eta^2+\dfrac{p-1}{pt}\int_M \abs{\nabla \left(\eta H^{t}\right)}^2\leq \dfrac{p}{t}\int_M \abs{\nabla \eta}^2H^{2t}+(n-1)\kappa \int_MH^{2t}\eta^2.
\end{align}
Now, assume that $\eta$ is supported in $B_R$ and denote by $V_R=\mathrm{Vol}\left(B_R\right)$. Using the Sobolev inequality \eqref{eq:sobolev}, it follows from \eqref{eq:moser-1},  we have
\begin{align}\label{eq:moser-2}
\dfrac{\epsilon_0}{2}\fint_{B_{R}}H^{2t+2}\eta^2+\dfrac{e^{-C_{n,p}\left(1+\sqrt{\kappa}R\right)}}{tR^2}\left(\fint_{B_R} \left(\eta^2 H^{2t}\right)^{\chi}\right)^{1/\chi}\leq\dfrac{p}{t}\fint_{B_R} H^{2t}\abs{\nabla\eta}^2+(n-1)\kappa \fint_{B_{R}}H^{2t}\eta^2,\quad\forall t\geq t_1.
\end{align}
In particular, replacing $\eta$ by $\eta^{t+1}$, and using H\"older's inequality with the respective exponent pairs
\begin{align*}
\left(\dfrac{t+1}{t},\, t+1\right),
\end{align*}
 we obtain from \eqref{eq:moser-2}
\begin{align*}
\dfrac{\epsilon_0}{2}\int_{B_{R}}H^{2t+2}\eta^{2t+2}\leq&\dfrac{p(t+1)^2}{t}\int_{B_R} H^{2t}\abs{\nabla\eta}^2\eta^{2t}+(n-1)\kappa \int_{B_{R}}H^{2t}\eta^{2t+2}\\
\leq&\dfrac{2p(t+1)^2}{t}\left(\int_{B_{R}}H^{2t+2}\eta^{2t+2}\right)^{\frac{t}{t+1}}\left(\int_{B_{R}}\abs{\nabla\eta}^{2t+2}\right)^{\frac{1}{t+1}}+2(n-1)\kappa \left(\int_{B_{R}}H^{2t+2}\eta^{2t+2}\right)^{\frac{t}{t+1}}\left(\int_{B_{R}}\eta^{2t+2}\right)^{\frac{1}{t+1}},
\end{align*}
which implies
\begin{align}\label{eq:local-lq1}
\dfrac{\epsilon_0}{2}\left(\int_{B_{R}}H^{2t+2}\eta^{2t+2}\right)^{\frac{1}{1+t}}\leq&\dfrac{2p(t+1)^2}{t}\left(\int_{B_{R}}\abs{\nabla\eta}^{2t+2}\right)^{\frac{1}{t+1}}+2(n-1)\kappa \left(\int_{B_{R}}\eta^{2t+2}\right)^{\frac{t}{t+1}}.
\end{align}

We are now in a position to state the local $L^q$-estimate \eqref{eq:lq} based on the above inequality \eqref{eq:local-lq1}. To see this, choose $\eta\in C_0^{\infty}\left(B_R\right)$ such that
\begin{align*}
0\leq\eta\leq1,\quad\eta\vert_{B_{R/2}}=1,\quad\abs{\nabla\eta}\leq 4/R
\end{align*}
It follows from \eqref{eq:local-lq1}
\begin{align*}
\left(\int_{B_{R/2}}H^{2t+2}\right)^{\frac{1}{1+t}}\leq&\left(\dfrac{64p(t+1)^2}{\epsilon_0tR^2}+\dfrac{4(n-1)\kappa}{\epsilon_0}\right)V_R^{\frac{1}{1+t}}.
\end{align*}
Notice that $\epsilon_0=\frac{(p-1)^2\left(1-\delta_0^+\right)}{n-1}$ and
\begin{align*}
t_1=2\left(p+\dfrac{p}{1-\delta_0^+}-2\right)+\dfrac{(n-1)p^3}{(p-1)\left(1-\delta_0^+\right)}> np-1>(n-1)p.
\end{align*}
Hence for every $t\geq t_1$
\begin{align*}
\dfrac{64p(t+1)^2}{\epsilon_0tR^2}+\dfrac{4(n-1)\kappa}{\epsilon_0}
= &\dfrac{64(n-1)p(t+1)^2}{(p-1)^2\left(1-\delta_0^+\right)tR^2}+\dfrac{4(n-1)^2\kappa}{(p-1)^2\left(1-\delta_0^+\right)}\\
\leq &\dfrac{64(t+1)^2}{(p-1)^2\left(1-\delta_0^+\right)R^2}+\dfrac{4(n-1)^2\kappa}{(p-1)^2\left(1-\delta_0^+\right)}.
\end{align*}
We conclude that there exists a constant $q_0=2t_1+2$ such that for all $q\geq q_0$,
\begin{align*}
\norm{H}_{L^q\left(B_{R/2}\right)}\leq\dfrac{2(n+1)}{p-1}\dfrac{1}{\sqrt{1-\delta_0^+}}V_R^{1/q}\left(\dfrac{q}{R}+\sqrt{\kappa}\right).
\end{align*}
We complete the proof.
\end{proof}

Now we can use the Moser iteration to obtain the local $L^{\infty}$-boundedness.

\begin{proof}[Proof of \autoref{thm:boundedness}]
We begin with the estimate \eqref{eq:moser-2} and discard the first term of the left hand side of \eqref{eq:moser-2} to obtain: for $H=\abs{\nabla u}$ and $t\geq t_1$, the following
\begin{align}\label{eq:moser-11}
\left(\int_{B_R} \left(\eta^2 H^{2t}\right)^{\chi}\right)^{1/\chi}\leq pe^{C_{n,p}\left(1+\sqrt{\kappa}R\right)}R^2V_{R}^{-\frac{\chi-1}{\chi}}\int_{B_R} H^{2t}\abs{\nabla\eta}^2+(n-1)e^{C_{n,p}\left(1+\sqrt{\kappa}R\right)}\kappa R^2tV_{R}^{-\frac{\chi-1}{\chi}}\int_{B_{R}}H^{2t}\eta^2
\end{align}
holds for any cutoff function $\eta$ supported in $B_R$. Fixed $q_0$ as in \autoref{lem:lq} and set
\begin{align*}
\tilde q_0=q_0+\sqrt{\kappa}R.
\end{align*}
One can check that for some uniform constant $\tilde C_{n,p}$,
\begin{align*}
q_0\leq\dfrac{\tilde C_{n,p}}{1-\delta_0^+}.
\end{align*}

To apply the Moser iteration, we choose the sequences of $R_i$ and $q_i$ such that
\begin{align*}
R_i=\dfrac{R}{4}\left(1+\dfrac{1}{2^i}\right),\quad q_i=\tilde q_0\chi^{i},\quad i=0,1,2,\dots.
\end{align*}
For each $i$, we can choose a cutoff function  $\eta_i\in C_0^{\infty}\left(B_{R_i}\right)$ satisfying
\begin{align*}
0\leq\eta_i\leq 1,\quad \eta_i\vert_{B_{R_{i+1}}}=1,\quad \abs{\nabla\eta_i}\leq 2^{i+4}/R.
\end{align*}
Using the Sobolev inequality \eqref{eq:sobolev}, we can see that the following follows from \eqref{eq:moser-11}
\begin{align*}
\left(\int_{B_{R}} \left(\eta_i^2 H^{q_i}\right)^{\chi}\right)^{1/\chi}\leq pe^{C_{n,p}\left(1+\sqrt{\kappa}R\right)}R^2V_{R}^{-\frac{\chi-1}{\chi}}\int_{B_R} H^{q_i}\abs{\nabla\eta_i}^2+(n-1)e^{C_{n,p}\left(1+\sqrt{\kappa}R\right)}\kappa R^2\dfrac{q_i}{2}V_{R}^{-\frac{\chi-1}{\chi}}\int_{B_{R}}H^{q_i}\eta_i^2
\end{align*}
which gives
\begin{align*}
\left(\int_{B_{R_{i+1}}}  H^{q_{i+1}}\right)^{1/\chi}\leq \dfrac{e^{L_{n,p}C_{n,p}\left(1+\sqrt{\kappa}R\right)}}{1-\delta_0^+}V_{R}^{-\frac{\chi-1}{\chi}}\left(4+\chi\right)^i\int_{B_{R_i}} H^{q_i},\quad\forall i=0,1,2,\dotsc.
\end{align*}
Here $L_{n,p}$ is a uniform constant depending only on $n$ and $p$. Thus
\begin{align*}
\norm{H}_{L^{q_{i+1}}\left(B_{R_{i+1}}\right)}\leq \left(\dfrac{e^{L_{n,p}C_{n,p}\left(1+\sqrt{\kappa}R\right)}}{1-\delta_0^+}V_{R}^{-\frac{\chi-1}{\chi}}\left(4+\chi\right)^i\right)^{1/q_i} \norm{H}_{L^{q_{i}}\left(B_{R_{i}}\right)},~~\forall i=0,1,2,\dots.
\end{align*}

Since
\begin{align*}
\Pi_{i=0}^{\infty}\left(\dfrac{e^{L_{n,p}C_{n,p}\left(1+\sqrt{\kappa}R\right)}}{1-\delta_0^+}V_{R}^{-\frac{\chi-1}{\chi}}\left(4+\chi\right)^i\right)^{1/q_i} =&\left(\dfrac{e^{L_{n,p}C_{n,p}\left(1+\sqrt{\kappa}R\right)}}{1-\delta_0^+}V_{R}^{-\frac{\chi-1}{\chi}}\right)^{\sum_{i=0}^{\infty}1/q_i}\left(4+\chi\right)^{\sum_{i=0}^{\infty}i/q_i}\\
=&\left(\dfrac{e^{L_{n,p}C_{n,p}\left(1+\sqrt{\kappa}R\right)}}{1-\delta_0^+}\right)^{\frac{\chi}{\tilde q_0(\chi-1)}}V_R^{-1/\tilde q_0}\left(4+\chi\right)^{\frac{\chi}{\tilde q_0(\chi-1)^2}},
\end{align*}
we conclude that
\begin{align*}
\norm{H}_{L^{\infty}\left(B_{R/4}\right)}\leq& \left(\dfrac{e^{L_{n,p}C_{n,p}\left(1+\sqrt{\kappa}R\right)}}{1-\delta_0^+}\right)^{\frac{\chi}{\tilde q_0(\chi-1)}}V_R^{-1/\tilde q_0}\left(4+\chi\right)^{\frac{\chi}{\tilde q_0(\chi-1)^2}}\norm{H}_{L^{\tilde q_0}\left(B_{R/2}\right)}.
\end{align*}
In view of the local $L^q$-estimate \autoref{lem:lq}, we conclude
\begin{align*}
\norm{H}_{L^{\infty}\left(B_{R/4}\right)}\leq& \left(\dfrac{e^{L_{n,p}C_{n,p}\left(1+\sqrt{\kappa}R\right)}}{1-\delta_0^+}\right)^{\frac{\chi}{\tilde q_0(\chi-1)}}V_R^{-1/\tilde q_0}\left(4+\chi\right)^{\frac{\chi}{\tilde q_0(\chi-1)^2}}C_{n,p,\delta_0^+}\left(\dfrac{\tilde q_0}{R}+\sqrt{\kappa}\right)V_{R}^{\frac{1}{\tilde q_0}}\\
\leq& C_{n,p,\delta_0^+}\left(\dfrac{1}{R}+\sqrt{\kappa}\right).
\end{align*}
Thus, we complete the proof of this lemma.
\end{proof}

According to the local gradient estimate established in \autoref{thm:boundedness}, we can take an argument analogous to the approach adopted in \cite{Sung2014} to obtain the global gradient estimate. That is, we give a proof of \autoref{thm:global-gradient-estimate}.

\begin{proof}[Proof of \autoref{thm:global-gradient-estimate}]
According to \autoref{lem:moser}, for $\lambda_0=\frac{p}{1-\delta_0^+}$, there holds
\begin{align}\label{eq:global-1}
\dfrac{1}{\lambda_0}\mathcal{L}_{p,w}H^{\lambda_0}\geq&\dfrac{(p-1)^2\left(1-\delta_0^+\right)}{n-1}H^{\lambda_0+p}-(n-1)\kappa H^{\lambda_0+p-2}- p(p-1)H^{\lambda_0+p-2}\abs{\nabla H}.
\end{align}
Without loss of generality, assume $M$ is noncompact.

According to \autoref{thm:boundedness}, we know that the function $H\coloneqq\abs{\nabla u}$ is uniformly bounded on $M$. Denote
\begin{align*}
\Lambda_0\coloneqq\dfrac{n-1}{p-1}\sqrt{\dfrac{\kappa}{1-\delta_0^+}}.
\end{align*}
Fixed $a\in\left(\Lambda_0,\, \Lambda\right]$ where
\begin{align*}
\Lambda_0<\Lambda\coloneqq\sup_{M}H<\infty.
\end{align*}

Notice that for every $\tau>0$, the inequality \eqref{eq:global-1} implies
\begin{align*}
\mathcal{L}_{p,w}H^{\tau\lambda_0}=&\tau H^{(\tau-1)\lambda_0}\mathcal{L}_{p,w}H^{\tau\lambda_0}+\tau(\tau-1)H^{(\tau-2)\lambda_0}H^{p-2}\hin{A_w\left(\nabla H^{\lambda_0}\right)}{\nabla H^{\lambda_0}}\\
\geq&\tau H^{(\tau-1)\lambda_0}\lambda_0\left(\dfrac{(p-1)^2\left(1-\delta_0^+\right)}{n-1}H^{\lambda_0+p}-(n-1)\kappa H^{\lambda_0+p-2}- p(p-1)H^{\lambda_0+p-2}\abs{\nabla H}\right)\\
&+\dfrac{\tau(\tau-1)(p-1)}{p}H^{(\tau-2)\lambda_0}H^{p-2}\abs{\nabla H^{\lambda_0}}^2\\
\geq&\tau H^{\tau\lambda_0}\lambda_0\left(\left(\dfrac{(p-1)^2\left(1-\delta_0^+\right)}{n-1}-\dfrac{p^3(p-1)}{4(\tau-1)\lambda_0}\right)
H^{p}-(n-1)\kappa H^{p-2}\right)\\
=&\tau H^{\tau\lambda_0}\lambda_0\left(\dfrac{(p-1)^2\left(1-\delta_0^+\right)}{n-1}\left(1-\dfrac{(n-1)p^2}{4(p-1)(\tau-1)}\right)
H^{p}-(n-1)\kappa H^{p-2}\right).
\end{align*}
If $H\geq a$ and
\begin{align*}
1-\dfrac{(n-1)p^2}{4(p-1)(\tau-1)}-\dfrac{\Lambda_0^2}{a^2}>0,
\end{align*}
then
\begin{align*}
\mathcal{L}_{p,w}H^{\tau\lambda_0}\geq\,\tau \lambda_0(n-1)\kappa\left(\dfrac{a^2}{\Lambda_0^2}\left(1-\dfrac{(n-1)p^2}{4(p-1)(\tau-1)}\right)-1\right)H^{p-2}H^{\tau\lambda_0}.
\end{align*}
Hence, by the choice of $a$, there exists a positive number $\tau_0$ such that the following inequality holds in $\set{H\geq a}$ for all $\tau\geq\tau_0$
\begin{align*}
\mathcal{L}_{p,w}H^{\tau\lambda_0}\geq\,&\tau C_{n,p,\kappa,\delta_0^+,a} H^{p-2}H^{\tau\lambda_0}.
\end{align*}
Here
\begin{align*}
C_{n,p,\kappa,\delta_0^+,a}=\lambda_0(n-1)\kappa\left(\dfrac{a^2}{\Lambda_0^2}\left(1-\dfrac{(n-1)p^2}{4(p-1)(\tau_0-1)}\right)-1\right)>0.
\end{align*}

Denote
$$\psi_a=\left(H^{\tau_0\lambda_0}-a^{\tau_0\lambda_0}\right)^+.$$
For every $t\geq1$ and nonnegative cutoff function  $\eta\in C_0^{\infty}\left(B_{r}\right)$, we have
\begin{align*}
\int_{M}\eta^2\psi_a^{2t-1}\mathcal{L}_{p,w}\psi_a=&-(2t-1)\int_{M}H^{p-2}\hin{A_{w}\left(\nabla\psi_a\right)}{\nabla\psi_a}
\psi_a^{2t-2}\eta^2-2\int_MH^{p-2}\hin{A_{w}\left(\nabla\psi_a\right)}{\nabla\eta}\eta\psi_a^{2t-1}\\
=&-\dfrac{2t-1}{t^2}\int_{M}H^{p-2}\hin{A_{w}\left(\nabla\psi_a^t\right)}{\nabla\psi_a^t}\eta^2-\dfrac{2}{t}\int_MH^{p-2}
\hin{A_{w}\left(\nabla\psi_a^t\right)}{\nabla\eta}\eta\psi_a^{t}\\
=&-\dfrac{t-1}{t^2}\int_{M}H^{p-2}\hin{A_{w}\left(\nabla\psi_a^t\right)}{\nabla\psi_a^t}\eta^2\\
&+\dfrac{1}{t}\int_{M}H^{p-2}\hin{A_{w}\left(\nabla\eta\right)}{\nabla\eta}\psi_a^{2t}-\dfrac{1}{t}\int_{M}H^{p-2}
\hin{A_{w}\left(\nabla\left(\eta\psi_a^t\right)\right)}{\nabla\left(\eta\psi_a^t\right)}\\
\leq&\dfrac{1}{t}\int_{M}H^{p-2}\hin{A_{w}\left(\nabla\eta\right)}{\nabla\eta}\psi_a^{2t}-\dfrac{1}{t}\int_{M}H^{p-2}
\hin{A_{w}\left(\nabla\left(\eta\psi_a^t\right)\right)}{\nabla\left(\eta\psi_a^t\right)}.
\end{align*}
Therefore,
\begin{align*}
\tau_0 C_{n,p,\kappa,\delta_0^+,a}\int_MH^{p-2}\psi_a^{2t}\eta^2\leq\dfrac{1}{t}\int_{M}H^{p-2}
\hin{A_{w}\left(\nabla\eta\right)}{\nabla\eta}\psi_a^{2t}-\dfrac{1}{t}\int_{M}H^{p-2}\hin{A_{w}
\left(\nabla\left(\eta\psi_a^t\right)\right)}{\nabla\left(\eta\psi_a^t\right)}.
\end{align*}
In particular, for some positive constant $C$
\begin{align*}
\int_MH^{p-2}\psi_a^{2t}\eta^2\leq\dfrac{C}{t}\int_{M}H^{p-2}\psi_a^{2t}\abs{\nabla\eta}^2,\quad\forall t\geq1.
\end{align*}
Choose cutoff functions $\eta_i$ with the properties:
\begin{align*}
\psi_i\in C_0^{\infty}\left(B_{i+1}\right),\quad 0\leq\eta_i\leq1,\quad\eta_i\vert_{B_{i}}=1,\quad\abs{\nabla\eta_i}\leq 2,\quad i=0,1,2,\dots.
\end{align*}
We get
\begin{align*}
\int_{B_{i}}H^{p-2}\psi_a^{2t}\leq\dfrac{C}{t}\int_{B_{i+1}}H^{p-2}\psi_a^{2t},\quad i=0,1,2,\dots.
\end{align*}
Consequently,
\begin{align*}
\int_{B_i}H^{p-2}\psi_a^{2t}\leq\left(\dfrac{C}{t}\right)^{j-i}\int_{B_k}H^{p-2}\psi_a^{2t},\quad\forall j>i.
\end{align*}
Since the Ricci curvature is bounded from below by $-(n-1)\kappa$, applying the Bishop-Gromov volume comparison theorem, we get
\begin{align*}
\mathrm{Vol}\left(B_{R}\right)\leq e^{C_n\left(1+\sqrt{\kappa}R\right)}.
\end{align*}
We obtain for every $t\geq 1$
\begin{align*}
\int_{B_i}H^{p-2}\psi_a^{2t}\leq e^{C_n}\left(\dfrac{C}{t}\right)^{-i}\left(\dfrac{C}{t}e^{C_n\sqrt{\kappa}}\right)^{j}\Lambda^{p-2+2t},\quad\forall j>i.
\end{align*}
Taking $t=\max\set{1,\, Ce^{C_n\sqrt{\kappa}}}$ and letting $j\to\infty$ and then $i\to\infty$, we obtain
\begin{align*}
\int_MH^{p-2}\psi_{a}^{2t}=0.
\end{align*}
which implies
\begin{align*}
\sup_MH\leq a.
\end{align*}
According to the arbitrariness of $a\in(\Lambda_0,\, \Lambda]$, we conclude that
\begin{align*}
\sup_MH\leq\Lambda_0.
\end{align*}
We complete the proof.
\end{proof}

\section{Liouville theorems}

We begin with the following simple case, i.e., $M^n$ is a compact manifold without boundary.
\begin{theorem}
Let $M^n$ be a compact Riemannian manifold with dimension $n$ and nonnegative Ricci curvature. Assume  for some constant $\alpha$,
\begin{align*}
p>1,\quad \alpha\left(1-\dfrac{(n-p)\alpha}{(n+1)p-n}\right)>0,\quad \alpha f(t)-tf'(t)\geq0,\quad\forall t>0.
\end{align*}
Then every positive and weak solution $u$ to \eqref{eq:quasi-h} is constant.
\end{theorem}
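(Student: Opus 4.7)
The plan is to integrate the pointwise identity \eqref{eq:basic'} of \autoref{lem:basic2} over the closed manifold $M$ with the special choice
\[
a=\frac{(n-1)p\alpha}{(n+1)p-n},\qquad b=-\frac{n(p-1)\alpha}{(n+1)p-n},
\]
and then read off the conclusion from the sign of each term.

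Since $M$ is compact without boundary, the integral of any distributional divergence against the test function $\phi\equiv 1$ is zero. By \autoref{lem:regular} the endomorphism $\mathcal{X}$, and hence $\mathscr X$, lies in $L^2_{loc}(M)$, while $u$ is positive and bounded on $M$; \autoref{lem:basic2} moreover guarantees that the identity \eqref{eq:basic'} holds in the $L^1_{loc}$ sense. Compactness then promotes this to an identity in $L^1(M)$, so integration yields
\[
\int_M u^a\bigl(\tr(\mathscr X^2)+\mathrm{Ric}(\mathbf{X},\mathbf{X})\bigr) + C_1\int_M u^{a+2b-2}\abs{\nabla u}^{2p} + \frac{n-1}{n}\int_M\bigl(\alpha f(u)-uf'(u)\bigr)u^{a+2b-1}\abs{\nabla u}^{p}=0,
\]
where
\[
C_1=\frac{(n-1)(p-1)\alpha}{(n+1)p-n}\left(1-\frac{(n-p)\alpha}{(n+1)p-n}\right).
\]

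I would then observe that each of the three integrands is nonnegative almost everywhere. The first is nonnegative because $\tr(\mathscr X^2)\geq 0$ by \autoref{lem:basic-3} and $\mathrm{Ric}(\mathbf{X},\mathbf{X})\geq 0$ by hypothesis; the coefficient $C_1$ is strictly positive since $p>1$ and $\alpha(1-\tfrac{(n-p)\alpha}{(n+1)p-n})>0$; and the last integrand is nonnegative by the subcriticality hypothesis $\alpha f(t)-tf'(t)\geq 0$. Because the sum vanishes, each term must vanish a.e. Strict positivity of $C_1$ combined with $u>0$ forces $\abs{\nabla u}\equiv 0$, whence $u$ is constant on each connected component of $M$.

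The main technical point to check is the integration-by-parts step in the presence of the critical set $\Omega_{cr}=\{\nabla u=0\}$, where $\mathscr X$ may fail to be continuous. This is exactly what \autoref{lem:regular} is designed for: $u^a\mathscr X(\mathbf{X})$ is an $L^1$ vector field with an $L^1$ distributional divergence, and the Lebesgue-negligibility of $\Omega_{cr}$ (via \cite{Antonini2023}) makes the pointwise manipulations in \autoref{lem:basic2} survive in the $L^1_{loc}$ sense. If one wants an extra layer of caution, one can test the distributional identity against a cutoff $\eta_{\epsilon}$ supported away from $\Omega_{cr}$ and pass to the limit $\epsilon\to 0$ using the integrability furnished by the previous lemmas.
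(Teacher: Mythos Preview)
Your proposal is correct and follows essentially the same route as the paper: integrate the pointwise identity \eqref{eq:basic'} over the closed manifold, use that each term on the right is nonnegative, and conclude from the strict positivity of the coefficient $C_1$ that $\nabla u\equiv 0$. The paper phrases the final step slightly differently (arguing that equality would force $\alpha(1-\tfrac{(n-p)\alpha}{(n+1)p-n})=0$ wherever $\nabla u\neq 0$, which contradicts the hypothesis), but the substance is identical, and your attention to the $L^1$ justification via \autoref{lem:regular} and \autoref{lem:basic2} matches the paper's implicit reliance on these lemmas.
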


\begin{proof}
Let $u$ be a positive and weak solution to \eqref{eq:quasi-h}. By \eqref{eq:basic'} we know that the following inequality holds true a.e. on $M^n$
\begin{align*} \mathrm{div}\left(u^{\frac{(n-1)p\alpha}{(n+1)p-n}}\left(\nabla_{\mathbf{X}}\mathbf{X}-\dfrac{\mathrm{div}\mathbf{X}}{n}\mathbf{X}\right)\right)
\geq 0.
\end{align*}
On the other hand, by Stokes theorem we have
$$\int_{M^n}\mathrm{div}\left(u^{\frac{(n-1)p\alpha}{(n+1)p-n}}\left(\nabla_{\mathbf{X}}\mathbf{X}-\dfrac{\mathrm{div}\mathbf{X}}{n}\mathbf{X}\right)\right)
= 0,$$
since $M^n$ is a compact manifold without boundary. This implies
$$\mathrm{div}\left(u^{\frac{(n-1)p\alpha}{(n+1)p-n}}\left(\nabla_{\mathbf{X}}\mathbf{X}-\dfrac{\mathrm{div}\mathbf{X}}{n}\mathbf{X}\right)\right)
= 0, \quad \mbox{a.e. on}\,\, M^n.$$
It is easy to see from \eqref{eq:basic'} that the above equality holds only if
\begin{align*}
\alpha\left(1-\dfrac{(n-p)\alpha}{(n+1)p-n}\right)=0,\quad f(u)=Au^{\alpha},\quad Ric\left(\nabla u,\nabla u\right)=0, \quad \tr\left(\mathscr X^2\right)=0,
\end{align*}
whenever $\nabla u\neq0$ and $A$ is a constant. This implies $\nabla u=0$ almost everywhere on $M^n$. Thus, $u$ must be a constant function.
\end{proof}

Another application of \cref{lem:basic2} is that we provide a new and simple proof of Bidaut-Veron and Veron's Liouville result \cite[Theorem 6.1]{BidMarVer91nonlinear} on the following equation
$$\Delta u-\lambda u+u^q=0$$
defined on a compact manifold without boundary.

\begin{theorem}[{\cite[Theorem 6.1]{BidMarVer91nonlinear}}]\label{thm:BV}
Let $(M,g)$ be a closed manifold with dimension $n\geq 2$ and $u$ be a positive and weak solution to
\begin{align*}
\Delta u-\lambda u+u^q=0,
\end{align*}
where $q>1$ and $\lambda>0$. Assume that the Ricci curvature satisfies
\begin{align}\label{cond:veron1}
\mathrm{Ric}_g\geq\frac{n-1}{n}(q-1)\lambda g
\end{align}
and
\begin{align}\label{cond:veron2}
q\leq \frac{n+2}{n-2}.
\end{align}
Furthermore, assume also that one of the condition \eqref{cond:veron1} and \eqref{cond:veron2} is strict if $(M,g)=(\mathbb S^n, g_0)$. Then $u$ is a constant with value $\lambda^{\frac{1}{q-1}}$.
\end{theorem}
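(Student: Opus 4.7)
The plan is to apply the pointwise identity \eqref{eq:basic'} of \autoref{lem:basic2} in the case $p = 2$ and $\alpha = q$, then integrate over the closed manifold and play the Ricci hypothesis \eqref{cond:veron1} against the sign that comes from the $uf'(u)$ term. With $p=2$, the critical exponent becomes $p_s = (n+2)/(n-2)$, which is exactly the number appearing in \eqref{cond:veron2}, and the vector field is $\mathbf{X} = u^{-nq/(n+2)}\nabla u$. A direct substitution into \eqref{eq:basic'} yields
\begin{align*}
\Div\!\left(u^{\frac{2(n-1)q}{n+2}}\mathscr{X}(\mathbf{X})\right) ={}& u^{\frac{2(n-1)q}{n+2}}\bigl(\mathrm{tr}(\mathscr X^2)+\mathrm{Ric}(\mathbf{X},\mathbf{X})\bigr) \\
& + \frac{(n-1)q\bigl(n+2-(n-2)q\bigr)}{(n+2)^2}\, u^{-\frac{2(q+n+2)}{n+2}}|\nabla u|^4 \\
& + \frac{n-1}{n}\bigl(qf(u)-uf'(u)\bigr)\, u^{-\frac{2q+n+2}{n+2}}|\nabla u|^2.
\end{align*}
For the nonlinearity $f(u) = u^q-\lambda u$, a line of algebra gives $qf(u)-uf'(u) = -(q-1)\lambda u$, so the last summand is $-\frac{n-1}{n}(q-1)\lambda\, u^{-2q/(n+2)}|\nabla u|^2$.

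Since $|\mathbf{X}|^2 = u^{-2nq/(n+2)}|\nabla u|^2$ and $u^{2(n-1)q/(n+2)}\cdot u^{-2nq/(n+2)} = u^{-2q/(n+2)}$, the last summand equals $-\frac{n-1}{n}(q-1)\lambda\, u^{2(n-1)q/(n+2)}|\mathbf{X}|^2$, which is exactly what the Ricci hypothesis \eqref{cond:veron1} absorbs:
\begin{align*}
u^{\frac{2(n-1)q}{n+2}}\mathrm{Ric}(\mathbf{X},\mathbf{X})-\frac{n-1}{n}(q-1)\lambda\, u^{-\frac{2q}{n+2}}|\nabla u|^2\geq 0.
\end{align*}
I would then integrate the identity over the closed manifold $M$; the divergence theorem kills the left-hand side, while on the right $\mathrm{tr}(\mathscr X^2)\ge 0$ by \autoref{lem:basic-3}, the combined Ricci term is nonnegative by the above, and the coefficient $(n-1)q(n+2-(n-2)q)/(n+2)^2$ in front of the $|\nabla u|^4$ integrand is nonnegative by \eqref{cond:veron2}. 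A sum of nonnegative integrals that equals zero must vanish term by term.

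Finally, I distinguish the two ways the hypotheses can be strict. If \eqref{cond:veron2} is strict then the $|\nabla u|^4$ coefficient is positive, so $\nabla u\equiv 0$; if \eqref{cond:veron1} is strict then $\mathrm{Ric}(\mathbf{X},\mathbf{X})>\frac{n-1}{n}(q-1)\lambda|\mathbf{X}|^2$ wherever $\mathbf{X}\neq 0$, and the vanishing of the combined Ricci contribution forces $\mathbf{X}\equiv 0$, i.e.\ $\nabla u\equiv 0$. In either case $u$ is a constant and the equation gives $u\equiv\lambda^{1/(q-1)}$. The main obstacle is the borderline configuration where both \eqref{cond:veron1} and \eqref{cond:veron2} are equalities simultaneously: there $q=(n+2)/(n-2)$, the relation $\mathrm{tr}(\mathscr X^2)\equiv 0$ (equivalently $|\mathscr X|^2\equiv 0$ since $\mathcal X$ is symmetric when $p=2$) holds, and the Ricci bound is saturated in the direction of $\mathbf{X}$ on $\{\nabla u\ne 0\}$. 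This is an Obata-type rigidity situation that forces $(M,g)$ to be homothetic to $(\mathbb S^n,g_0)$; that outcome is ruled out by the standing hypothesis that one of \eqref{cond:veron1}, \eqref{cond:veron2} is strict on the round sphere, completing the proof.
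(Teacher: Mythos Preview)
Your proof is correct and follows essentially the same route as the paper: both integrate the identity of \autoref{lem:basic2} on the closed manifold with $p=2$ and $\alpha=q$, use \eqref{cond:veron1}--\eqref{cond:veron2} to make every term on the right nonnegative, and in the borderline case read $\mathscr X\equiv 0$ as Obata's equation $\nabla^2 v=\tfrac{1}{n}(\Delta v)\,g$ for a suitable power $v$ of $u$, then invoke Yano--Obata rigidity. The only cosmetic difference is your use of \eqref{eq:basic'} (so $b=-nq/(n+2)$) versus the paper's \eqref{eq:basic-0} with $b=-2q/(n+2)$; your rigidity step is stated tersely, and spelling out that $\mathbf X=-\tfrac{n-2}{2}\nabla\!\bigl(u^{-2/(n-2)}\bigr)$ so that $\mathscr X\equiv 0$ is exactly Obata's equation would match the paper's explicit Case~3.
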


\begin{proof}
Substituting  $p=2$ and $f=-\lambda u+u^q$ into \eqref{eq:basic-0} and integrating by parts, we obtain
\begin{align*}
 0
=&\int_Mu^{a} \tr\left(\mathscr X^2\right)+u^{a+2b}\mathrm{Ric}\left(\nabla u,\nabla u\right) \\
&+\int_M\left(\dfrac{ a}{ 2}\left(1-\dfrac{(n-2)a}{2(n-1)}\right)-\dfrac{n-1}{n}\left(b+\dfrac{na}{2(n-1)}\right)^2\right)u^{a+2b-2}
\abs{\nabla u}^{4}\\
&+\int_M\dfrac{n-1}{n}\left(1-\dfrac{(n+2)a}{2(n-1) } \right)\lambda  u^{a+2b}\abs{\nabla u}^{2}+\dfrac{n-1}{n}\left( \dfrac{(n+2)a}{2(n-1) } -q\right)   u^{a+2b+q-1}\abs{\nabla u}^{2}.
\end{align*}
Now, in the above integral identity we choose
$$a=\frac{2(n-1)}{n+2}q\quad\mbox{and}\quad b=-\frac{2q}{n+2},$$
then, it follows that
\begin{align*}
 0
=&\int_Mu^{\frac{2(n-1)}{n+2}q} \tr\left(\mathscr X^2\right)+\int_Mu^{\frac{2(n-3)}{n+2}q}\left(\mathrm{Ric}\left(\nabla u,\nabla u\right) -\dfrac{n-1}{n}\left(q-1\right)\lambda   \abs{\nabla u}^{2}\right)\\
&+\int_M \frac{ n-1 }{n+2}q\left(1-\frac{n-2}{n+2}q\right) u^{\frac{2(n-3)}{n+2}q-2}
\abs{\nabla u}^{4}.
\end{align*}
On the other hand, by the assumptions in this corollary we know that each of the three terms on the right hand side of the above is non-negative. Next, we need only to consider the following three cases.
\begin{enumerate}[\text{Case} 1)]
\item If $q<\frac{n+2}{n-2}$, then it is easy to see that
$$0\geq \int_M \frac{ n-1 }{n+2}q\left(1-\frac{n-2}{n+2}q\right) u^{\frac{2(n-3)}{n+2}q-2}\abs{\nabla u}^{4}$$
implies $\nabla u=0$ and $u$ is a constant.

\item If $q=\frac{n+2}{n-2}$, $(M,g)=(\mathbb{S}^n, g_0)$ and \eqref{cond:veron1} is a strict inequality, then
$$0\geq \int_Mu^{\frac{2(n-3)}{n-2} }\left(\mathrm{Ric}\left(\nabla u,\nabla u\right) -\dfrac{n-1}{n}\left(q-1\right)\lambda\abs{\nabla u}^{2}\right)$$
implies $\nabla u=0$ and hence $u$ is a constant.

\item If $q=\frac{n+2}{n-2}$, $(M,g)\neq (\mathbb{S}^n, g_0)$ and \eqref{cond:veron1} is not strict, then we have
$$\int_Mu^{\frac{2(n-1)}{n-2}} \tr\left(\mathscr X^2\right)=0.$$
Hence, the fact $u>0$ forces $\tr\left(\mathscr X^2\right)=0$, i.e.,
$$\mathrm{tr}\left(\mathcal X-\frac{\mathrm{tr}\mathcal X}{n}\right)^2=0.$$
Noting
$$
\mathcal X=\nabla \left(u^b\nabla u\right) =
\nabla \left(u^{-\frac{2}{n-2}}\nabla u\right) =
\begin{cases}
 \frac{n-2}{n-4}\nabla\nabla \left(u^{\frac{n-4}{n-2}}\right) , &\text{ if } n\neq 4;
 \\
\nabla \nabla\ln u  , &\text{ if }  n=4,
\end{cases}
$$
hence, we have
\begin{align*}
\nabla\nabla \left(u^{\frac{n-4}{n-2}}\right)=\frac{1}{n}\Delta \left(u^{\frac{n-4}{n-2}}\right)g, \quad \text{ if } n\neq 4;
\\
\nabla\nabla (\ln u)=\frac{1}{n}\Delta (\ln u)g, \quad \text{ if } n= 4.
\end{align*}
According to the rigidity theorem \cite[Theorem E]{YanOba70conformal} (see \cite{IshTas59riemannian} for more detailed discussions), the equation $$\nabla \nabla f=\frac{1}{n}\Delta f g$$
admits a non-trivial solution if and only if $(M,g)=(\mathbb S^n, g_0)$. Thus $u$ must be a constant.
\end{enumerate}
Thus we complete the proof of this theorem.
\end{proof}

From now on, without loss of generality, we always assume $M$ is noncompact. We have the following fundamental integral estimate.
\begin{lem}\label{lem:crucial-0}
Let $M^n$ be a  Riemannian manifold with dimension $n$ and nonnegative Ricci curvature.  Assume  for some constant $\alpha$,
\begin{align*}
p>1,\quad \alpha\left(1-\dfrac{(n-p)\alpha}{(n+1)p-n}\right)>0,\quad \alpha f(t)-tf'(t)\geq0,\quad\forall t>0.
\end{align*}
If $u$ is a global positive and weak solution to \eqref{eq:quasi-h}, then for any $\eta\in C^\infty_0(M)$ and every constant $\gamma\geq2p$ there holds
\begin{align}\label{eq:crucial-0}
\int_Mu^{-\frac{((n+1)p-2n)\alpha}{(n+1)p-n}-2}\abs{\nabla u}^{2p}\eta^{\gamma}+\int_{M}f(u)^2u^{-\frac{((n+1)p-2n)\alpha}{(n+1)p-n}}\eta^{\gamma}
\leq& C(n,p,\alpha)\gamma^{2p}\int_Mu^{-\frac{((n+1)p-2n)\alpha}{(n+1)p-n}+2(p-1)}\abs{\nabla\eta}^{2p}\eta^{\gamma-2p}.
\end{align}
Here $C(n,p,\alpha)$ is a positive constant depending only on $n$, $p$, and $\alpha$.
\end{lem}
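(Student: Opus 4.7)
\bigskip

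\textbf{Proof proposal.} The plan is to integrate the key pointwise differential identity \eqref{eq:basic'} from \autoref{lem:basic2} against the test weight $\eta^\gamma$, then process the resulting RHS with two applications of Young's inequality (one scaled like $1$ and one scaled like $\gamma^{-2}$) so as to produce a factor $\gamma^{2p}$; finally, the $\int f(u)^2$-term is recovered by plugging a suitable test function into the weak equation $-\Delta_p u = f(u)$ and comparing with the bound already obtained for $\int \abs{\nabla u}^{2p}u^{-c-2}\eta^\gamma$. Write $a=\frac{(n-1)p\alpha}{(n+1)p-n}$, $b=-\frac{n(p-1)\alpha}{(n+1)p-n}$, $c=\frac{((n+1)p-2n)\alpha}{(n+1)p-n}$, so that $a+2b=-c$, and set $\mathbf X=u^b\abs{\nabla u}^{p-2}\nabla u$ as in \autoref{lem:basic2}.

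Integrating \eqref{eq:basic'} against $\eta^\gamma$ and using the divergence theorem, each of the four terms on its RHS is nonnegative under the standing hypotheses: $\tr(\mathscr X^2)\geq 0$ by \autoref{lem:basic-3}, $\mathrm{Ric}(\mathbf X,\mathbf X)\geq 0$ by the curvature assumption, $\alpha(1-\frac{(n-p)\alpha}{(n+1)p-n})>0$ by hypothesis, and $\alpha f(u)-uf'(u)\geq 0$ by subcriticality. Discarding the Ricci and $(\alpha f-uf')$-terms and keeping $\tr(\mathscr X^2)$ and $\abs{\nabla u}^{2p}$ yields, after integration by parts,
\begin{equation*}
\int_M u^a \tr(\mathscr X^2)\eta^\gamma + C_1\int_M u^{-c-2}\abs{\nabla u}^{2p}\eta^\gamma \;\leq\; \gamma\int_M u^a\abs{\mathscr X(\mathbf X)}\abs{\nabla \eta}\eta^{\gamma-1},
\end{equation*}
with $C_1=C_1(n,p,\alpha)>0$. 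Using $\abs{\mathscr X(\mathbf X)}\leq\abs{\mathscr X}\cdot u^b\abs{\nabla u}^{p-1}$, I then apply a two-step Young's inequality: first, split $u^a\abs{\mathscr X}\cdot u^b\abs{\nabla u}^{p-1}\abs{\nabla\eta}\eta^{\gamma-1}$ as $(u^{a/2}\abs{\mathscr X}\eta^{\gamma/2})\cdot(u^{a/2+b}\abs{\nabla u}^{p-1}\abs{\nabla\eta}\eta^{\gamma/2-1})$ and apply $2\times 2$ Young with a fixed $\epsilon_1$ (small enough to absorb into $\tr(\mathscr X^2)$ via \autoref{lem:basic-3}); second, apply Young with exponents $(p,\frac{p}{p-1})$ to factor $u^{a+2b}\abs{\nabla u}^{2(p-1)}\abs{\nabla\eta}^2\eta^{\gamma-2}$ into the product of $u^{-c-2}\abs{\nabla u}^{2p}\eta^\gamma$ (to be absorbed) and $u^{-c+2(p-1)}\abs{\nabla\eta}^{2p}\eta^{\gamma-2p}$. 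Choosing the second Young parameter $\epsilon_2\sim\gamma^{-2}$ makes the coefficient of $\abs{\nabla u}^{2p}$ absorbable while producing the $\gamma^{2p}$ factor on the RHS, and establishes $I_1\coloneqq\int u^{-c-2}\abs{\nabla u}^{2p}\eta^\gamma\leq C(n,p,\alpha)\gamma^{2p} J$, where $J$ denotes the right-hand side of \eqref{eq:crucial-0}.

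For the $f(u)^2$-term, insert $\psi=f(u)u^{-c}\eta^\gamma$ into the weak formulation (this is admissible since $u\in C^{1,\beta}_{loc}$ and $f\in C^1$; one can work on $\Omega\setminus\Omega_{cr}$ and invoke the regularity of \autoref{lem:regular} to pass to the limit) to obtain
\begin{equation*}
I_2\coloneqq\int_M f(u)^2 u^{-c}\eta^\gamma = \int_M\abs{\nabla u}^p\bigl[uf'(u)-cf(u)\bigr]u^{-c-1}\eta^\gamma + \gamma\int_M\abs{\nabla u}^{p-2}\hin{\nabla u}{\nabla\eta}f(u)u^{-c}\eta^{\gamma-1}.
\end{equation*}
Rewriting the first integrand as $(uf'(u)-\alpha f(u))+(\alpha-c)f(u)$ uses subcriticality to discard the nonpositive contribution $\int\abs{\nabla u}^p(uf'(u)-\alpha f(u))u^{-c-1}\eta^\gamma\leq 0$. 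The remaining "clean" term $\int\abs{\nabla u}^p f(u)u^{-c-1}\eta^\gamma$ is bounded by Cauchy-Schwarz as $I_1^{1/2}I_2^{1/2}$, and the cross term is handled by a Young-plus-Young argument identical to the one used for $I_1$. Absorbing the $\varepsilon I_2$ pieces into the LHS and substituting the bound $I_1\leq C\gamma^{2p}J$ yields $I_2\leq C(n,p,\alpha)\gamma^{2p}J$, which combined with the $I_1$-bound gives \eqref{eq:crucial-0}.

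The main obstacle I anticipate is bookkeeping the powers of $\gamma$ through the nested Young's inequalities so that the final factor is exactly $\gamma^{2p}$ and no stray $I_1$ or $I_2$ piece slips through. A secondary technical point is the admissibility of the test function $\psi=f(u)u^{-c}\eta^\gamma$ when $u$ has only $C^{1,\beta}$ regularity and the critical set $\Omega_{cr}$ may be nontrivial; this is handled by a standard capacity/cutoff approximation exploiting the measure-zero property of $\Omega_{cr}$ from \cite{Antonini2023} together with the $L^2_{loc}$ regularity of $\mathcal X$ established in \autoref{lem:regular}.
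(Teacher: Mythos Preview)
Your proposal is correct and follows essentially the same route as the paper: integrate \eqref{eq:basic'} against $\eta^\gamma$, discard the nonnegative Ricci and subcriticality terms, control the boundary term by the $\tr(\mathscr X^2)$--$\abs{\mathscr X}^2$ comparison of \autoref{lem:basic-3}, then recover the $\int f(u)^2$ term by testing the equation with $f(u)u^{-c}\eta^\gamma$. The only cosmetic difference is that the paper packages the estimate via a single triple H\"older inequality with exponents $(2,\tfrac{2p}{p-1},2p)$ followed by Young with the same triple, whereas you iterate binary Young inequalities; these are equivalent, and your $\gamma^{2p}$ bookkeeping (fixed $\epsilon_1$ after shifting the factor $\gamma$ onto the second term, then $\epsilon_2\sim\gamma^{-2}$) reproduces exactly the paper's constant.
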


\begin{proof}
It follows from \eqref{eq:basic'} in \autoref{lem:basic2}, the following inequality holds
\begin{align}\label{eq:liouville-1}
\begin{split}
\Div\left(u^{\frac{(n-1)p\alpha}{(n+1)p-n}} \mathscr{X} (\mathbf X)\right)\geq\,&u^{\frac{(n-1)p\alpha}{(n+1)p-n}}\left(\tr\left(\mathscr X^2\right)\right)+A_{n,p,\alpha}u^{-\frac{((n+1)p-2n)\alpha}{(n+1)p-n}-2}\abs{\nabla u}^{2p}\\
&+\dfrac{n-1}{n}\left(\alpha f(u)-uf'(u)\right)u^{-\frac{((n+1)p-2n)\alpha}{(n+1)p-n}-1}\abs{\nabla u}^{p}
\end{split}
\end{align}
in the $L^1_{loc}$ sense, where
\begin{align*}
\mathbf{X}=u^{-\frac{n(p-1)\alpha}{(n+1)p-n}}\abs{\nabla u}^{p-2}\nabla u\quad\quad \mbox{and}\quad\quad
\mathscr X=\nabla\mathbf X-\frac{\Div\mathbf X}{n}id_{TM}\in\mathrm{End}(TM).
\end{align*}

For the sake of simplicity, we denote
\begin{align*}
t=\dfrac{((n+1)p-2n)\alpha}{(n+1)p-n}.
\end{align*}
For any nonnegative function $\eta\in C_0^{\infty}\left(M\right)$ and positive number $\gamma\geq 2p$, it follows from \eqref{eq:liouville-1}
\begin{align}\label{eq:liouville-2}
\int_M\Div\left(u^{\frac{(n-1)p\alpha}{(n+1)p-n}}\mathscr X(\mathbf X)\right)\eta^{\gamma}\geq&\int_Mu^{\frac{(n-1)p\alpha}{(n+1)p-n}}\left(\tr\left(\mathscr X^2\right)\right)\eta^{\gamma}+A_{n,p,\alpha}\int_Mu^{-t-2}\abs{\nabla u}^{2p}\eta^{\gamma}.
\end{align}
Direct computation shows
\begin{align*}
 \int_M\Div\left(u^{\frac{(n-1)p\alpha}{(n+1)p-n}} \mathscr X(\mathbf{X})\right) \eta^{\gamma}
=&-\gamma\int_Mu^{\frac{(n-1)p\alpha}{(n+1)p-n}}\hin{\mathscr X(\mathbf{X})}{\nabla\eta}\eta^{\gamma-1}
\leq \gamma\int_Mu^{\frac{(n-1)p\alpha}{(n+1)p-n}}\abs{ \mathscr X(\mathbf{X})}\abs{\nabla\eta}\eta^{\gamma-1}
\end{align*}
Since $p>1$, applying \autoref{lem:basic-3}, we have
\begin{align*}
\abs{\mathscr X(\mathbf X)}\leq&\abs{\mathscr X}\abs{\mathbf{X}}
\leq \left(\dfrac{1+(p-1)^2}{2(p-1)}\tr\left(\mathscr X^2\right)\right)^{1/2}u^{-\frac{n(p-1)\alpha}{(n+1)p-n}}\abs{\nabla u}^{p-1}.
\end{align*}
It follows
\begin{align*}
 \int_M\Div\left(u^{\frac{(n-1)p\alpha}{(n+1)p-n}} \mathscr X(\mathbf{X})\right) \eta^{\gamma}
\leq&\sqrt{\frac{1+(p-1)^2}{2(p-1)}}\gamma\int_Mu^{\frac{(n-1)p\alpha}{(n+1)p-n}}\left(\tr\left(\mathscr X^2\right)\right)^{1/2}u^{-\frac{n(p-1)\alpha}{(n+1)p-n}}\abs{\nabla u}^{p-1}\abs{\nabla\eta}\eta^{\gamma-1}\\
=&\sqrt{\frac{1+(p-1)^2}{2(p-1)}}\gamma\int_Mu^{\frac{(n-p)\alpha}{(n+1)p-n}}\left(\tr\left(\mathscr X^2\right)\right)^{1/2}\abs{\nabla u}^{p-1}\abs{\nabla\eta}\eta^{\gamma-1}\\.
\end{align*}
Using H\"older's inequality with respective exponent pair
\begin{align*}
\left(2,\,\dfrac{2p}{p-1},\, 2p\right),
\end{align*}
we have
\begin{align*}
 \int_M\Div\left(u^{\frac{(n-1)p\alpha}{(n+1)p-n}} \mathscr X(\mathbf{X})\right) \eta^{\gamma}
\leq&\sqrt{\frac{1+(p-1)^2}{2(p-1)}}\gamma\left(\int_{M}u^{\frac{(n-1)p\alpha}{(n+1)p-n}}\left(\tr\left(\mathscr X^2\right)\right)\eta^{\gamma}\right)^{1/2}\left(\int_Mu^{-\frac{((n+1)p-2n)\alpha}{(n+1)p-n}-2}\abs{\nabla u}^{2p}\eta^{\gamma}\right)^{\frac{p-1}{2p}}\\
&\times\left(\int_Mu^{-\frac{((n+1)p-2n)\alpha}{(n+1)p-n}+2(p-1)}\abs{\nabla\eta}^{2p}\eta^{\gamma-2p}\right)^{\frac{1}{2p}},
\end{align*}
which implies
\begin{align}\label{eq:liouville-5}
\begin{split}&\int_M\Div\left(u^{\frac{(n-1)p\alpha}{(n+1)p-n}}\mathscr X(\mathbf X)\right)\eta^{\gamma}\\
\leq&\sqrt{\frac{1+(p-1)^2}{2(p-1)}}\gamma\left(\int_{M}u^{\frac{(n-1)p\alpha}{(n+1)p-n}}\left(\tr\left(\mathscr X^2\right)\right)\eta^{\gamma}\right)^{1/2}\left(\int_Mu^{-t-2}\abs{\nabla u}^{2p}\eta^{\gamma}\right)^{\frac{p-1}{2p}}\left(\int_Mu^{-t+2(p-1)}\abs{\nabla\eta}^{2p}\eta^{\gamma-2p}\right)^{\frac{1}{2p}}.
\end{split}
\end{align}
Using Young's inequality with the respective exponent pair
\begin{align*}
\left(2,\, \dfrac{2p}{p-1},\, 2p\right),
\end{align*}
we obtain from \eqref{eq:liouville-2} and \eqref{eq:liouville-5}
\begin{align*}
&\int_Mu^{\frac{(n-1)p\alpha}{(n+1)p-n}}\left(\tr\left(\mathscr X^2\right)\right)\eta^{\gamma}+A_{n,p,\alpha}\int_Mu^{-t-2}\abs{\nabla u}^{2p}\eta^{\gamma}\\
\leq&\dfrac12\int_Mu^{\frac{(n-1)p\alpha}{(n+1)p-n}}\left(\tr\left(\mathscr X^2\right)\right)\eta^{\gamma}+\dfrac{p-1}{2p}A_{n,p,\alpha}\int_Mu^{-t-2}\abs{\nabla u}^{2p}\eta^{\gamma}\\
&+\dfrac{1}{2p}\left(\frac{1+(p-1)^2}{2(p-1)}\right)^{p}A_{n,p,\alpha}^{1-p}\gamma^{2p}\int_Mu^{-t+2(p-1)}\abs{\nabla\eta}^{2p}
\eta^{\gamma-2p}
\end{align*}
which implies
\begin{align}\label{eq:liouville-7}
\int_Mu^{-t-2}\abs{\nabla u}^{2p}\eta^{\gamma}
\leq&\left(\frac{1+(p-1)^2}{2(p-1)A_{n,p,\alpha}}\right)^{p}\gamma^{2p}\int_Mu^{-t+2(p-1)}\abs{\nabla\eta}^{2p}\eta^{\gamma-2p}.
\end{align}

On the other hand,
\begin{align}\label{eq:liouville-3}
\begin{split}
\int_{M}f(u)^2u^{-t}\eta^{\gamma}=&\int_M\left(-\Delta_pu\right)f(u)u^{-t}\eta^{\gamma}\\
=&\int_M\abs{\nabla u}^{p}\left(uf'(u)-tf(u)\right)u^{-t-1}\eta^{\gamma}+\gamma\int_M\abs{\nabla u}^{p-2}f(u)u^{-t}\hin{\nabla u}{\nabla\eta}\eta^{\gamma-1}.
\end{split}
\end{align}
Using H\"older's inequality with respective exponent pair
\begin{align*}
\left(2,\,\dfrac{2p}{p-1},\,2p\right),
\end{align*}
we get
\begin{align}\label{eq:liouville-6}
\begin{split}
\gamma\int_M\abs{\nabla u}^{p-2}f(u)u^{-t}\hin{\nabla u}{\nabla\eta}\eta^{\gamma-1}\leq&\gamma\int_Mf(u)u^{-t}\abs{\nabla u}^{p-1}\abs{\nabla\eta}\eta^{\gamma-1}\\
\leq&\gamma\left(\int_{M}f(u)^2u^{-t}\eta^{\gamma}\right)^{\frac{1}{2}}\left(\int_M\abs{\nabla u}^{2p}u^{-t-2}\eta^{\gamma}\right)^{\frac{p-1}{2p}}\left(\int_{M}u^{-t+2(p-1)}\abs{\nabla \eta}^{2p}\eta^{\gamma-2p}\right)^{\frac{1}{2p}}.
\end{split}
\end{align}
It follows from \eqref{eq:liouville-3} and \eqref{eq:liouville-6}
\begin{align*}
\int_{M}f(u)^2u^{-t}\eta^{\gamma}=&\int_M\abs{\nabla u}^{p}\left(uf'(u)-tf(u)\right)u^{-t-1}\eta^{\gamma}+\gamma\int_M\abs{\nabla u}^{p-2}f(u)u^{-t}\hin{\nabla u}{\nabla\eta}\eta^{\gamma-1}\\
\leq&\left(\alpha-t\right)\int_M\abs{\nabla u}^{p}f(u)u^{-t-1}\eta^{\gamma}\\
&+\gamma\left(\int_{M}f(u)^2u^{-t}\eta^{\gamma}\right)^{\frac{1}{2}}\left(\int_M\abs{\nabla u}^{2p}u^{-t-2}\eta^{\gamma}\right)^{\frac{p-1}{2p}}\left(\int_{M}u^{-t+2(p-1)}\abs{\nabla \eta}^{2p}\eta^{\gamma-2p}\right)^{\frac{1}{2p}}\\
\leq&\abs{\alpha-t}\left(\int_M\abs{f(u)}^2u^{-t}\eta^{\gamma}\right)^{1/2}\left(\int_M\abs{\nabla u}^{2p}u^{-t-2}\eta^{\gamma}\right)^{1/2}\\
&+\gamma\left(\int_{M}f(u)^2u^{-t}\eta^{\gamma}\right)^{\frac{1}{2}}\left(\int_M\abs{\nabla u}^{2p}u^{-t-2}\eta^{\gamma}\right)^{\frac{p-1}{2p}}\left(\int_{M}u^{-t+2(p-1)}\abs{\nabla \eta}^{2p}\eta^{\gamma-2p}\right)^{\frac{1}{2p}}.
\end{align*}
We obtain
\begin{align*}
\int_{M}f(u)^2u^{-t}\eta^{\gamma}\leq&2\abs{\alpha-t}^2\int_M\abs{\nabla u}^{2p}u^{-t-2}\eta^{\gamma}+2\gamma^2\left(\int_M\abs{\nabla u}^{2p}u^{-t-2}\eta^{\gamma}\right)^{\frac{p-1}{p}}\left(\int_{M}u^{-t+2(p-1)}\abs{\nabla \eta}^{2p}\eta^{\gamma-2p}\right)^{\frac{1}{p}}.
\end{align*}
Hence, we conclude from the above inequality together with \eqref{eq:liouville-7}
\begin{align*}
\int_{M}f(u)^2u^{-t}\eta^{\gamma}\leq& C(n,p,\alpha)\gamma^{2p}\int_Mu^{-t+2(p-1)}\abs{\nabla\eta}^{2p}\eta^{\gamma-2p}.
\end{align*}
Immediately, the required inequality \eqref{eq:crucial-0} follows from \eqref{eq:liouville-7} and the above inequality and we finish the proof of this lemma.
\end{proof}

We need also the following weak Harnack inequality for positive and weak $p$-superharmonic function, the proof of which is given in the appendix A of this paper.
\begin{lem}\label{lem:weak-hanack}
Let $M^n$ be a complete Riemannian manifold with dimension $n$ and nonnegative Ricci curvature. If $p>1$ and $u$ is a positive and weak p-superharmonic function, then for all $q\in\left(0,\, (p-1)\chi_{n,p}\right)$ there exists a uniform constant $C=C(n,p,\chi_{n,p},q)$ such that
\begin{align}\label{eq:weak-hanack}
\inf_{B_R}u\geq C R^{-n/q}\norm{u}_{L^{q}\left(B_{2R}\right)},\quad\forall R>0.
\end{align}
Here $\chi_{n,p}=\frac{n}{n-p}$ if $1< p<n$, while $\chi_{n,p}$ can be any positive number which is larger than $1$ if $p\geq n$.
\end{lem}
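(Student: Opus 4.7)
The plan is to prove \autoref{lem:weak-hanack} by the classical Moser iteration scheme for $p$-superharmonic functions, transplanted to the Riemannian setting. The key analytic inputs are the Saloff-Coste scale-invariant Sobolev inequality, the Neumann Poincar\'e inequality, and the Bishop-Gromov volume doubling theorem (each recalled at the end of Section~3 under the hypothesis $\mathrm{Ric}\geq 0$), together with a John-Nirenberg type argument on the doubling metric measure space $(M,d,\mathrm{vol})$. The positivity of $u$ is used crucially so that $u^\alpha$ is a legitimate test function for any $\alpha\in\mathbb{R}$.

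First I would derive a Caccioppoli estimate by testing the weak $p$-superharmonic inequality $\int|\nabla u|^{p-2}\langle\nabla u,\nabla\phi\rangle\geq 0$ against $\phi=u^\alpha\eta^p$ with $\alpha<0$, $\alpha\neq 1-p$, and $\eta\in C_0^\infty(B_R)$. Expanding $\nabla\phi$ and absorbing the mixed term via Young's inequality produces, with $\beta:=(\alpha+p-1)/p$,
$$\int|\nabla u^\beta|^p\eta^p\leq C(p,\beta)\int u^{p\beta}|\nabla\eta|^p\qquad\text{for every }\beta\in(-\infty,(p-1)/p)\setminus\{0\}.$$
Applying the Saloff-Coste Sobolev inequality to $u^\beta\eta$ and inserting this Caccioppoli then yields the reverse H\"older estimate
$$\left(\fint_{B_{R'}}u^{\chi_{n,p}q}\right)^{1/\chi_{n,p}}\leq\frac{C(p,q)R^p}{(R-R')^p}\fint_{B_R}u^{q},\qquad q=p\beta<p-1,\ q\neq 0.$$
For $q=-s<0$, iterating this along $s_k=\chi_{n,p}^k s$ with a geometric sequence of radii decreasing toward an inner ball, together with $(\fint u^{-s_k})^{-1/s_k}\to\inf u$, produces a Moser-type bound $(\fint_{B_{2R}}u^{-s})^{-1/s}\leq C(n,p,s)\inf_{B_R}u$ for every $s>0$.

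To bridge from negative to positive powers I would use the limiting choice $\alpha=1-p$ in the Caccioppoli derivation, which yields the logarithmic estimate $\int|\nabla\ln u|^p\eta^p\leq C(p)\int|\nabla\eta|^p$; combining with the Neumann Poincar\'e inequality produces the BMO-type control $\fint_{B_R}|\ln u-\overline{\ln u}_{B_R}|^p\leq C_0$. The John-Nirenberg lemma on the doubling metric measure space $(M,d,\mathrm{vol})$ then furnishes $\delta=\delta(n,p)>0$ with $(\fint u^\delta)(\fint u^{-\delta})\leq C$ on every ball, so combining with the negative Moser bound of the previous paragraph gives $(\fint_{B_{2R}}u^\delta)^{1/\delta}\leq C\inf_{B_R}u$. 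Finally, for an arbitrary $q\in(0,(p-1)\chi_{n,p})$, I would iterate the reverse H\"older estimate in the direction of increasing positive exponent along the chain $q_0,\chi_{n,p}q_0,\ldots,q$ with $q_0=q/\chi_{n,p}^k$ chosen so every intermediate exponent stays strictly below $p-1$ (this is possible precisely when $q<(p-1)\chi_{n,p}$), preceded by a H\"older interpolation reducing to $L^\delta$ if $q_0>\delta$. Telescoping the radii to match the $B_R/B_{2R}$ scaling of the statement yields $(\fint_{B_{2R}}u^q)^{1/q}\leq C(n,p,q)\inf_{B_R}u$, and multiplying through by $V_{2R}^{1/q}$ together with Bishop-Gromov's bound $V_{2R}\leq C_n R^n$ delivers \eqref{eq:weak-hanack}.

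The main technical obstacle is the John-Nirenberg step on the abstract metric measure space. Under $\mathrm{Ric}\geq 0$, however, Bishop-Gromov gives volume doubling and the scale-invariant $(1,p)$-Poincar\'e inequality is available, so the classical John-Nirenberg argument (as formulated e.g.\ in Saloff-Coste's monograph on Harnack inequalities on doubling Poincar\'e spaces, or in the work of Kinnunen-Shanmugalingam on Poincar\'e spaces) transfers without essential modification. A secondary point is that the Caccioppoli constant $C(p,\beta)$ blows up as $\beta\to(p-1)/p$, but the finitely many exponents appearing in the positive bootstrap can be kept in a compact subset of $(0,p-1)\setminus\{0\}$ bounded away from $p-1$, so this causes no essential difficulty.
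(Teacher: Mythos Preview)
Your proposal is correct and follows the classical Moser iteration scheme, with essentially the same overall architecture as the paper's proof in Appendix~A (Caccioppoli for $u^\alpha\eta^p$, negative-power iteration to reach the infimum, logarithmic estimate from $\alpha=1-p$, cross-over, then positive-power bootstrap up to $(p-1)\chi_{n,p}$). The one substantive difference lies in the cross-over step from negative to positive exponents: you invoke the abstract John--Nirenberg lemma on the doubling metric measure space $(M,d,\mathrm{vol})$ to pass from the BMO bound on $\ln u$ to $(\fint u^\delta)(\fint u^{-\delta})\leq C$, whereas the paper proceeds more directly by running a second Moser-type iteration on powers of $w=\ln u-\fint\ln u$ to obtain moment bounds $\fint_{B_{R/2}}|w|^\beta\leq C_{n,p,\chi}^\beta\,\beta^\beta$, and then sums the exponential series to get $\fint_{B_{R/2}}e^{\beta_0|w|}\leq 2$ for some small $\beta_0>0$. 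Your route is conceptually cleaner and isolates the BMO-to-exponential-integrability step as a known black box, at the cost of citing John--Nirenberg on doubling Poincar\'e spaces as an external input; the paper's argument is fully self-contained but requires an additional explicit iteration. Either approach is standard and adequate here.
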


\begin{theorem}\label{thm:integral-estimate1}
Let $M^n$ be a complete Riemannian manifold with dimension $n$ and nonnegative Ricci curvature. Assume for some constant $\alpha$
\begin{align*}
p>1,\quad \dfrac{1}{\alpha}>\dfrac{n-p}{(n+1)p-n},\quad \alpha f(t)-tf'(t)\geq0,\quad\forall t>0.
\end{align*}
If we assume additionally that $f$ is nonnegative and
\begin{align*}
\rho\coloneqq \dfrac{((n+1)p-2n)\alpha}{((n+1)p-n)(p-1)\chi_{n,p}}-\dfrac{2}{\chi_{n,p}}>-1,
\end{align*}
then for any $q\in\left(0,\, \min\set{2,\frac{2}{1+\rho^+}}\right)$,
\begin{align}\label{eq:integral-estimate1}
\norm{f(u)u^{1-p}}_{L^{q}\left(B_{R}\right)}\leq C R^{\frac{n}{q}-p},\quad\forall R>0.
\end{align}
Here the constant $C$ depends only on $n, p, \chi_{n,p}, \alpha$ and $q$.
\end{theorem}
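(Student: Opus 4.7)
The plan is to combine the fundamental integral estimate \eqref{eq:crucial-0} with the weak Harnack inequality \eqref{eq:weak-hanack}---which applies because $f\geq 0$ makes $u$ a positive $p$-superharmonic function---and a single application of H\"older's inequality, in order to convert the $L^2$-type weighted control of $f(u)u^{1-p}$ provided by \autoref{lem:crucial-0} into the desired $L^q$-estimate. Set $g:=f(u)u^{1-p}$, $t:=\frac{((n+1)p-2n)\alpha}{(n+1)p-n}$ and $s:=2(p-1)-t$, so that $f(u)^2u^{-t}=g^2u^s$, and observe that the hypothesis $\rho>-1$ is precisely the inequality $s<(p-1)\chi_{n,p}$.

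First I would choose a standard cutoff $\eta\in C_0^\infty(B_{2R})$ with $\eta\equiv 1$ on $B_R$ and $\abs{\nabla\eta}\leq C/R$, and substitute $\gamma=2p$ into \eqref{eq:crucial-0} to obtain
\begin{align*}
\int_{B_R} g^2 u^s \leq C R^{-2p}\int_{B_{2R}} u^s.
\end{align*}
Then I would apply H\"older's inequality with the conjugate pair $(2/q,2/(2-q))$ to the factorization $g^q=(g^2u^s)^{q/2}\cdot u^{-qs/2}$, giving
\begin{align*}
\int_{B_R} g^q\leq\left(\int_{B_R} g^2 u^s\right)^{q/2}\left(\int_{B_R} u^{-qs/(2-q)}\right)^{(2-q)/2}.
\end{align*}

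The core of the argument is then to bound $\int_{B_{2R}} u^s$ and $\int_{B_R} u^{-qs/(2-q)}$ by polynomial powers of $R$ times some power of $\norm{u}_{L^r}$ in such a way that the $u$-dependent factors cancel. I split into cases according to the sign of $s$. If $s\geq 0$ (equivalently $\rho\leq 0$, so $\rho^+=0$ and any $q<2$ is admissible), then $u^{-qs/(2-q)}\leq(\inf_{B_R}u)^{-qs/(2-q)}$; I apply \eqref{eq:weak-hanack} with exponent $r=s$ (valid since $s<(p-1)\chi_{n,p}$) to bound $\inf_{B_R}u$ from below by $cR^{-n/s}\norm{u}_{L^s(B_{2R})}$, and the resulting power of $\norm{u}_{L^s}$ cancels against the one coming from the trivial identity $\int_{B_{2R}}u^s=\norm{u}_{L^s(B_{2R})}^s$ (the degenerate case $s=0$ being handled directly since then both integrals are simply bounded by $CR^n$). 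If $s<0$ (equivalently $\rho>0$), I instead bound $\int_{B_{2R}}u^s=\int_{B_{2R}}u^{-\abs{s}}\leq \abs{B_{2R}}(\inf_{B_{2R}}u)^{-\abs{s}}$ and apply \eqref{eq:weak-hanack} with exponent $r=q\abs{s}/(2-q)$; the admissibility condition $r<(p-1)\chi_{n,p}$ is then precisely $q<2/(1+\rho)$, which together with $q<2$ yields the stated range $q\in(0,2/(1+\rho^+))$.

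The main obstacle is the bookkeeping: the weak-Harnack exponent $r$ must be chosen so that the powers of $\norm{u}_{L^r}$ on the two sides match exactly, and one has to verify that the accumulated power of $R$ collapses to $R^{n-qp}$. A direct computation shows that with the choices above both cancellations occur and one obtains
\begin{align*}
\int_{B_R} g^q\leq CR^{n-qp},
\end{align*}
which is \eqref{eq:integral-estimate1}. The hypothesis $\rho>-1$ is used precisely to guarantee the existence of a valid weak-Harnack exponent $r\in(0,(p-1)\chi_{n,p})$ in each case.
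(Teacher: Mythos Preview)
Your proposal is correct and follows essentially the same route as the paper's own proof: apply H\"older with the pair $(2/q,\,2/(2-q))$ to reduce to the $L^2$-weighted estimate \eqref{eq:crucial-0}, then split according to the sign of $s=2(p-1)-t$ and use the weak Harnack inequality \eqref{eq:weak-hanack} (with exponent $s$ when $s>0$, and with exponent $q|s|/(2-q)$ when $s<0$) so that the $u$-dependent factors cancel and only $R^{n-pq}$ survives. The only cosmetic differences are that the paper works with averages $\fint_{B_R}$ and the pair of balls $B_{R/2}\subset B_R$, and does not introduce the shorthand $g=f(u)u^{1-p}$; your domain bookkeeping ($B_R\subset B_{2R}\subset B_{4R}$) also works since the monotonicity $\|u\|_{L^r(B_R)}\le\|u\|_{L^r(B_{4R})}$ goes in the right direction for the cancellation.
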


\begin{proof}
As before, we denote
\begin{align*}
t=\dfrac{((n+1)p-2n)\alpha}{(n+1)p-n}.
\end{align*}
The assumption $\rho>-1$ is equivalent to
\begin{align*}
2(p-1)-t<(p-1)\chi_{n,p}.
\end{align*}

For each $q\in(0,2)$ and denote by
\begin{align*}
s=t-(p-1)q,
\end{align*}
it follows from \eqref{eq:crucial-0}
\begin{align*}
\int_{M}f(u)^{q}u^{-(p-1)q}\eta^{\gamma}=&\int_{M}f(u)^{q}u^{s-t}\eta^{\gamma}\\
\leq&\left(\int_Mf(u)^2u^{-t}\eta^{\gamma}\right)^{\frac{q}{2}}\left(\int_Mu^{\frac{2s}{2-q}-t}\eta^{\gamma}\right)^{\frac{2-q}{2}}\\
\leq&C\left(\int_Mu^{2(p-1)-t}\abs{\nabla\eta}^{2p}\eta^{\gamma-2p}\right)^{\frac{q}{2}}\left(\int_Mu^{\frac{2s}{2-q}-t}
\eta^{\gamma}\right)^{\frac{2-q}{2}}\\
=&C\left(\int_Mu^{2(p-1)-t}\abs{\nabla\eta}^{2p}\eta^{\gamma-2p}\right)^{\frac{q}{2}}\left(\int_Mu^{\frac{q}{2-q}\left(t-2(p-1)\right)}
\eta^{\gamma}\right)^{\frac{2-q}{2}},
\end{align*}
where $C=C(n,p,\chi_{n,p},\alpha,q)$. Hence, by using the properties of the cutoff functions we infer
\begin{align}\label{eq:f^q}
\fint_{B_{R/2}}f(u)^{q}u^{-(p-1)q}\leq &CR^{-pq}\left(\fint_{B_R}u^{2(p-1)-t}\right)^{\frac{q}{2}}\left(\fint_{B_R}u^{\frac{q}{2-q}\left(t-2(p-1)\right)}\right)^{\frac{2-q}{2}}.
\end{align}

Firstly, we consider a special case:
\begin{align*}
t=2(p-1).
\end{align*}
Using the Bishop-Gromov volume comparison theorem, we have from \eqref{eq:f^q}
\begin{align*}
\int_{B_{R/2}}f(u)^{q}u^{-(p-1)q}\leq &CR^{n-pq}.
\end{align*}

Secondly, we consider the case:
\begin{align*}
t< 2(p-1).
\end{align*}
By assumption
\begin{align*}
0<2(p-1)-t<(p-1)\chi_{n,p}.
\end{align*}
We can use \autoref{lem:weak-hanack} and the Bishop-Gromov comparison theorem to conclude from \eqref{eq:f^q}
\begin{align*}
\fint_{B_{R/2}}f(u)^{q}u^{-(p-1)q}\leq &CR^{-pq}\left(\fint_{B_R}u^{2(p-1)-t}\right)^{\frac{q}{2}}\left(\left(\inf_{B_R}u\right)^{\frac{q}{2-q}
\left(t-2(p-1)\right)}\right)^{\frac{2-q}{2}}\\
\leq&CR^{-pq}\left(\fint_{B_R}u^{2(p-1)-t}\right)^{\frac{q}{2}}\left(\left(R^{-n}\int_{B_{2R}}u^{2(p-1)-t}\right)^{\frac{1}{2(p-1)-t}} \right)^{\frac{q}{2}\left(t-2(p-1)\right)}\\
\leq&CR^{-pq}\left(\frac{1}{V(B_{2R})}\int_{B_{2R}}u^{2(p-1)-t}\right)^{\frac{q}{2}}\left(R^{-n} \int_{B_{2R}}u^{2(p-1)-t} \right)^{-\frac{q}{2} }\\
\leq &CR^{\frac{qn}{2}-pq}V(B_{2R})^{-\frac{q}{2}}\\
\leq &CR^{\frac{qn}{2}-pq}V(B_{R/2})^{-\frac{q}{2}}.
\end{align*}
The constant $C$ may differ line by line in the above calculation, but they only depend on $n$ and $p$. Since $q<2$, we have
\begin{align*}
\int_{B_{R/2}}f(u)^{q}u^{s-t}\leq CR^{\frac{qn}{2}-pq}V(B_{R/2})^{1-\frac{q}{2}}\leq  CR^{n-pq}.
\end{align*}

Finally, we consider the case
\begin{align*}
t>2(p-1).
\end{align*}
If
\begin{align*}
\dfrac{q}{2-q}\left(t-2(p-1)\right)<(p-1)\chi_{n,p},
\end{align*}
which is equivalent to
\begin{align*}
q<\dfrac{2}{1+\rho}=\min\set{2,\, \dfrac{2}{1+\rho^+}},
\end{align*}
then we also use the weak Harnack inequality \eqref{eq:weak-hanack} for $p$-superharmonic function and the Bishop-Gromov comparison theorem to conclude from \eqref{eq:f^q}
\begin{align*}
\fint_{B_{R/2}}f(u)^{q}u^{-(p-1)q}\leq &CR^{-pq}\left(\left(\inf_{B_R}u\right)^{2(p-1)-t}\right)^{\frac{q}{2}}\left(\fint_{B_R}u^{\frac{q}{2-q}\left(t-2(p-1)\right)}\right)^{\frac{2-q}{2}}\\
\leq&CR^{-pq}\left(\left(\fint_{B_R}u^{\frac{q}{2-q}\left(t-2(p-1)\right)}\right)^{\frac{2-q}{q(t-2(p-1))}}\right)^{\frac{q(2(p-1)-t)}{2}}
\left(\fint_{B_R}u^{\frac{q}{2-q}\left(t-2(p-1)\right)}\right)^{\frac{2-q}{2}}\\
\leq&CR^{-pq}.
\end{align*}
Thus,
\begin{align*}
\int_{B_{R/2}}f(u)^{q}u^{-(p-1)q}\leq CR^{n-pq}.
\end{align*}
We complete the proof, i.e., we obtain the desired integral estimate \eqref{eq:integral-estimate1}.
\end{proof}

Using this integral estimate, we can prove \autoref{thm:main2}.
\begin{proof}[Proof of \autoref{thm:main2}]
According to \autoref{main:cor-1}, we may assume
\begin{align*}
\alpha\geq\dfrac{(n+3)(p-1)}{n-1}.
\end{align*}
Moreover, if $1<p<n$, then
\begin{align*}
0<\alpha<\dfrac{(n+1)p-n}{n-p}.
\end{align*}
Denote by
\begin{align*}
\rho=\dfrac{((n+1)-2n)\alpha}{((n+1)p-n)(p-1)\chi_{n,p}}-\dfrac{2}{\chi_{n,p}}.
\end{align*}

We need to deal with the following five cases.
\medskip

{\bf Case 1. $p\geq n$.}

In this case, since $p>\frac{n}{2}$, we can choose first $q\in(0,2)$ and then $\chi_{n,p}$ large such that
\begin{align*}
n-pq<0,\quad \abs{\rho}<1.
\end{align*}
Then \autoref{thm:integral-estimate1} implies that
\begin{align*}
\int_{B_R}f(u)^qu^{(1-p)q}\leq CR^{n-pq}\to0,\quad\text{as}\ R\to\infty.
\end{align*}
We conclude that $f(u)\equiv0$ and $u$ is $p$-harmonic and $u$ is constant. This prove the first part $(A)$.
\medskip

{\bf Case 2. $3\leq n<2p<2n$ and
\begin{align*}
\alpha\leq\dfrac{2((n+1)p-n)(p-1)}{(n+1)p-2n}.
\end{align*}}

In this case, we must have
\begin{align*}
0 \leq 2(p-1)-\dfrac{((n+1)p-2n)\alpha}{(n+1)p-n}<(p-1)\chi_{n,p},
\end{align*}
since
$$\chi_{n,p}=\frac{n}{n-p}.$$
Applying \autoref{thm:integral-estimate1}, we can verify that for each $0<q<2$ there holds
\begin{align*}
\int_{B_R}f(u)^qu^{(1-p)q}\leq CR^{n-pq}.
\end{align*}
Since $n<2p$, one can take $q\in(0,\,2)$ to conclude that
\begin{align*}
\int_Mf(u)u^{(1-p)q}=0.
\end{align*}
Consequently, $u$ is constant.
\medskip

{\bf Case 3. $\frac{4}{3}<p<n=2$ and
\begin{align*}
\alpha\leq\dfrac{2(3p-2)(p-1)}{3p-4}.
\end{align*}}

For the present situation, we can get
\begin{align*}
0 \leq 2(p-1)-\dfrac{((n+1)p-2n)\alpha}{(n+1)p-n}<(p-1)\chi_{n,p}.
\end{align*}
By taking a similar argument to the case 2, we conclude that $u$ is a constant function.
\medskip

{\bf Case 4. $1<p\leq\frac{4}{3}$, $n=2$ and
\begin{align*}
\alpha<\dfrac{2(p-1)^2(3p-2)}{(4-3p)(2-p)}.
\end{align*}}

In this case, we have
\begin{align*}
0 \leq 2(p-1)-\dfrac{((n+1)p-2n)\alpha}{(n+1)p-n}<(p-1)\chi_{n,p}.
\end{align*}
Hence, we conclude that $u$ is a constant function.
\medskip

{\bf Case 5. $\frac{2n}{n+1}< p<n$ and
\begin{align*}
\alpha>\dfrac{2((n+1)p-n)(p-1)}{(n+1)p-2n}.
\end{align*}}

In this case, since
$$0<\alpha<\frac{(n+1)p-n}{n-p},$$
we must have
\begin{align*}
0<\dfrac{((n+1)p-2n)\alpha}{(n+1)p-n}-2(p-1)<\dfrac{(2p-n)(p-1)}{n-p}.
\end{align*}
Hence, it follows $2p>n$ and we can take $q\in\left(\frac{n}{p},\,2\right)$ such that
\begin{align*}
0<\dfrac{((n+1)p-2n)\alpha}{(n+1)p-n}-2(p-1)<\dfrac{2-q}{q}\cdot\dfrac{n(p-1)}{n-p}.
\end{align*}
In particular, we have
\begin{align*}
n<pq \quad \mbox{and} \quad 0<q<\dfrac{2}{1+\rho}.
\end{align*}
Applying \autoref{thm:integral-estimate1}, we conclude that $u$ is a constant function. Combing the case 2 and the case 4, we prove the second part $(B)$. Combing the cases 3-5, we prove the third part $(C)$.
\end{proof}

Finally, we give a proof of \autoref{thm:main3}.

\begin{proof}[Proof of \autoref{thm:main3}]
Let $u$ be a positive and weak solution to \eqref{eq:quasi-h}. According to \autoref{lem:crucial-0}, for every $\gamma\geq2p$ and nonnegative  function $\eta\in C_0^{\infty}\left(M\right)$
\begin{align}\label{add}
\int_{M}f(u)^2u^{-t}\eta^{\gamma}
\leq& C(n,p,\alpha)\gamma^{2p}\int_Mu^{2(p-1)-t}\abs{\nabla\eta}^{2p}\eta^{\gamma-2p},
\end{align}
where
\begin{align*}
t=\dfrac{((n+1)p-2n)\alpha}{(n+1)p-n}.
\end{align*}

According to the assumption \eqref{eq:condition-f1}, we may assume for some positive constants $u_0>1$ and $C_0$
\begin{align*}
\abs{f(u)}\geq C_0^{-1}u^{p_0-1},\quad\forall u>u_0.
\end{align*}
Hence, we conclude that, if $f(u_0)>0$, there hold true
\begin{align*}
C_0^{-1}u^{p_0-1}\leq f(u)\leq \dfrac{f(u_0)}{u_0^{\alpha}} u^{\alpha},\quad\forall u\geq u_0
\end{align*}
and
\begin{align*}
f(u)\geq \dfrac{f(u_0)}{u_0^{\alpha}} u^{\alpha},\quad\forall u< u_0,
\end{align*}
since $f$ is subcritical. In particular, in this case, we must have $\alpha\geq p_0-1$.

Next, we need to consider the following three cases:
\medskip

{\bf Case 1. $t=2(p-1)$.}

In this case, it follows from \eqref{eq:crucial-0} that $u$ is a constant function under the assumption $p>n/2$ but without any growth estimate on $f$.
\medskip

{\bf Case 2. $t>2(p-1)$.}

In this case, we assume $f$ is nonnegative. According to \autoref{thm:main2}, we may assume $1<p<n$ and
\begin{align*}
0<\alpha<\dfrac{(n+1)p-n}{n-p}.
\end{align*}
Since
\begin{align*}
t=\dfrac{((n+1)p-2n)\alpha}{(n+1)p-n}>2(p-1),
\end{align*}
in view of the upper bound of $\alpha$ we get
\begin{align*}
2(p-1)<\dfrac{(n+1)p-2n}{n-p}
\end{align*}
which implies
$$p\geq\max\set{\frac{n+1}{2},\frac{2n}{n+1}}.$$
According to \autoref{thm:main2}, we know that $u$ is constant.
\medskip

{\bf Case 3. $t<2(p-1)$.}

In this case, we have
\begin{align*}
0<\dfrac{2(p-1)-t}{2(p_0-1)-t}<1,\quad\quad 0<\dfrac{2(p-1)-t}{2\alpha-t}<1,
\end{align*}
and
\begin{align*}
\dfrac{(2(p_0-1)-t)p}{p_0-p}-\dfrac{(2\alpha-t)p}{\alpha+1-p}=\dfrac{(2(p-1)-t)(\alpha+1-p_0)p}{(p_0-p)(\alpha+1-p)}\geq0,
\end{align*}
provided $p<p_0\leq\alpha+1$.

Now, there are two subcases which we need to deal with.
\medskip

{\bf Subcase 1. $f(u_0)\leq0$.}

In this case, we assume $p>n/2$. We may assume $f(u_0)<0$ and $p_0=\alpha$. Using H\"older's inequalities, we infer that for $$\gamma\geq\frac{(2(p_0-1)-t)p}{p_0-p}$$
there holds
\begin{align*}
\int_Mu^{2(p-1)-t}\abs{\nabla\eta}^{2p}\eta^{\gamma-2p}=&\int_{\set{u<u_0}}u^{2(p-1)-t}\abs{\nabla\eta}^{2p}\eta^{\gamma-2p}+\int_{\set{u\geq u_0}}u^{2(p-1)-t}\abs{\nabla\eta}^{2p}\eta^{\gamma-2p}\\
\leq&u_0^{2(p-1)-t}\int_{\set{u<u_0}}\abs{\nabla\eta}^{2p}\eta^{\gamma-2p}\\
&+\left(\int_{\set{u\geq u_0}}u^{2(p_0-1)-t}\eta^{\gamma}\right)^{\frac{2(p-1)-t}{2(p_0-1)-t}}\left(\int_{\set{u\geq u_0}}\abs{\nabla\eta}^{\frac{(2(p_0-1)-t)p}{p_0-p}}\eta^{\gamma-\frac{(2(p_0-1)-t)p}{p_0-p}}\right)^{\frac{2(p_0-p)}{2(p_0-1)-t}}\\
\leq&C\int_{\set{u<u_0}}\abs{\nabla\eta}^{2p}\eta^{\gamma-2p}\\
&+C\left(\int_{\set{u\geq u_0}}f(u)^2u^{-t}\eta^{\gamma}\right)^{\frac{2(p-1)-t}{2(p_0-1)-t}}\left(\int_{\set{u\geq u_0}}\abs{\nabla\eta}^{\frac{(2(p_0-1)-t)p}{p_0-p}}\eta^{\gamma-\frac{(2(p_0-1)-t)p}{p_0-p}}\right)^{\frac{2(p_0-p)}{2(p_0-1)-t}}.
\end{align*}
Thus, for
$$\gamma\geq\frac{(2(p_0-1)-t)p}{p_0-p},$$
we obtain from \eqref{add} and the above inequality
\begin{align*}
\int_{M}f(u)^2u^{-t}\eta^{\gamma}
\leq& C\gamma^{2p}\int_Mu^{2(p-1)-t}\abs{\nabla\eta}^{2p}\eta^{\gamma-2p}\\
\leq& C\gamma^{2p}\int_{\set{u<u_0}}\abs{\nabla\eta}^{2p}\eta^{\gamma-2p}\\
&+C\gamma^{2p}\left(\int_{\set{u\geq u_0}}f(u)^2u^{-t}\eta^{\gamma}\right)^{\frac{2(p-1)-t}{2(p_0-1)-t}}\left(\int_{\set{u\geq u_0}}\abs{\nabla\eta}^{\frac{(2(p_0-1)-t)p}{p_0-p}}\eta^{\gamma-\frac{(2(p_0-1)-t)p}{p_0-p}}\right)^{\frac{2(p_0-p)}{2(p_0-1)-t}}.
\end{align*}
By using Young's inequalities and the Bishop-Gromov volume comparison theorem, we infer from the above inequality that there exists some positive constant $C$ such that
\begin{align*}
\int_{B_{R/2}}f(u)^2u^{-t} \leq& C\left(R^{n-2p}+R^{n-\frac{(2(p_0-1)-t)p}{p_0-p}}\right).
\end{align*}
Noticing
\begin{align*}
n-\dfrac{(2(p_0-1)-t)p}{p_0-p}\leq& n-\dfrac{(2\alpha-t)p}{\alpha+1-p}\\
=&n-\dfrac{(n+1)p^2}{(n+1)p-n}\cdot\dfrac{1}{1-\frac{p-1}{\alpha}}\\
<&n-\dfrac{(n+1)p^2}{(n+1)p-n}\cdot\dfrac{1}{1-\frac{(p-1)(n-p)}{(n+1)p-n}}\\
=&-1,
\end{align*}
we conclude that
\begin{align*}
\int_{B_{R/2}}f(u)^2u^{-t} \leq C\left(R^{n-2p}+R^{-1}\right).
\end{align*}
Since $n<2p$, we conclude that $f(u)\equiv0$ on $M$ and $u$ is a $p$-harmonic function and $u$ is a constant function.
\medskip

{\bf Subcase 2. $f(u_0)>0$. }

We conclude that $f$ is positive everywhere  and $\alpha\geq p_0-1>p-1$. We estimate the following term by using H\"older's inequality
\begin{align*}
\int_{\set{u<u_0}}u^{2(p-1)-t}\abs{\nabla\eta}^{2p}\eta^{\gamma-2p}
\leq&\left(\int_{\set{u<u_0}}u^{2\alpha-t}\eta^{\gamma}\right)^{\frac{2(p-1)-t}{2\alpha-t}}
\left(\int_{\set{u<u_0}}\abs{\nabla\eta}^{\frac{(2\alpha-t)p}{\alpha+1-p}}
\eta^{\gamma-\frac{(2\alpha-t)p}{\alpha+1-p}}\right)^{\frac{2(\alpha+1-p)}{2\alpha-t}}\\
\leq&C\left(\int_{\set{u<u_0}}f(u)^2u^{-t}\eta^{\gamma}\right)^{\frac{2(p-1)-t}{2\alpha-t}}\left(\int_{\set{u<u_0}}
\abs{\nabla\eta}^{\frac{(2\alpha-t)p}{\alpha+1-p}}\eta^{\gamma-\frac{(2\alpha-t)p}{\alpha+1-p}}\right)^{\frac{2(\alpha+1-p)}{2\alpha-t}}.
\end{align*}
Thus, for
$$\gamma\geq\max\set{\frac{(2\alpha-t)p}{\alpha+1-p}, \, \frac{(2(p_0-1)-t)p}{p_0-p}}=\frac{(2(p_0-1)-t)p}{p_0-p},$$
we have
\begin{align*}
\int_{M}f(u)^2u^{-t}\eta^{\gamma}\leq& C\gamma^{2p}\int_Mu^{2(p-1)-t}\abs{\nabla\eta}^{2p}\eta^{\gamma-2p}\\
\leq&C\gamma^{2p}\int_Mu^{2(p-1)-t}\abs{\nabla\eta}^{2p}\eta^{\gamma-2p}\\
&+C\gamma^{2p}\left(\int_{\set{u\geq u_0}}f(u)^2u^{-t}\eta^{\gamma}\right)^{\frac{2(p-1)-t}{2(p_0-1)-t}}\left(\int_{\set{u\geq u_0}}\abs{\nabla\eta}^{\frac{(2(p_0-1)-t)p}{p_0-p}}\eta^{\gamma-\frac{(2(p_0-1)-t)p}{p_0-p}}\right)^{\frac{2(p_0-p)}{2(p_0-1)-t}}\\
\leq&C\gamma^{2p}\left(\int_{\set{u<u_0}}f(u)^2u^{-t}\eta^{\gamma}\right)^{\frac{2(p-1)-t}{2\alpha-t}}\left(\int_{\set{u<u_0}}
\abs{\nabla\eta}^{\frac{(2\alpha-t)p}{\alpha+1-p}}\eta^{\gamma-\frac{(2\alpha-t)p}{\alpha+1-p}}\right)^{\frac{2(\alpha+1-p)}{2\alpha-t}}\\
&+C\gamma^{2p}\left(\int_{\set{u\geq u_0}}f(u)^2u^{-t}\eta^{\gamma}\right)^{\frac{2(p-1)-t}{2(p_0-1)-t}}\left(\int_{\set{u\geq u_0}}\abs{\nabla\eta}^{\frac{(2(p_0-1)-t)p}{p_0-p}}\eta^{\gamma-\frac{(2(p_0-1)-t)p}{p_0-p}}\right)^{\frac{2(p_0-p)}{2(p_0-1)-t}}.
\end{align*}
Using Young's inequalities and the Bishop-Gromov volume comparison theorem, we deduce that there exists some positive constant $C$ such that
\begin{align*}
\int_{B_{R/2}}f(u)^2u^{-t} \leq& C\left(R^{n-\frac{(2\alpha-t)p}{\alpha+1-p}}+R^{n-\frac{(2(p_0-1)-t)p}{p_0-p}}\right)\leq CR^{n-\frac{(2\alpha-t)p}{\alpha+1-p}}\leq CR^{-1}.
\end{align*}
Hence, we conclude that $f(u)\equiv0$ which is impossible.
\end{proof}

\appendix
\section{A weak Harnack inequality for \texorpdfstring{$p$}{p}-superharmonic functions}
In this appendix, we will give a proof of the weak Harnack inequality  for positive $p$-superharmonic functions on a Riemannian manifold with nonnegative Ricci curvature. First, we  collect several  well-known facts:
\begin{enumerate}[\text{Fact} 1.]
\item Bishop-Gromov volume comparison theorem:
\begin{align*}
\dfrac{\mathrm{Vol}\left(B_{R}\right)}{\mathrm{Vol}\left(B_{r}\right)}\leq \dfrac{R^n}{r^n},\quad\forall 0<r<R.
\end{align*}
\item  Sobolev inequalities: for any $p\in[1,\infty)$ and any $\phi\in C_0^{\infty}\left(B_R\right)$
\begin{align*}
\left(\fint_{B_{R}}\abs{\psi}^{p\chi_{n,p}}\right)^{\frac{1}{p\chi_{n,p}}}\leq&C_{\mathcal{SD},p}R\left(\fint_{B_R}
\abs{\nabla\psi}^p\right)^{\frac{1}{p}}.
\end{align*}
Here $\chi_{n,p}=\frac{n}{n-p}$ if $1\leq p<n$ while $\chi_{n,p}$ can be any positive number which is larger than $1$.
\item Neumann  Poincar\'e inequalities: for any $p\in[1,\infty)$ and any $\phi\in C^{\infty}\left(B_R\right)$
\begin{align*}
\left(\fint_{B_R}\abs{\phi-\fint_{B_{R}}\phi}^p\right)^{1/p}\leq C_{\mathcal{PN},p}R\left(\fint_{B_R}\abs{\nabla\phi}^p\right)^{1/p}.
\end{align*}
\end{enumerate}

We need the following local maximum principle for positive and weak $p$-superharmonic functions.

\begin{lem}\label{lem:lmp}Let $M^n$ be a complete Riemannian manifold with dimension $n$ and nonnegative Ricci curvature. If $p>1$ and $u$ is a positive and weak p-superharmonic function, then
\begin{align}\label{eq:lmp}
\norm{u^{-1}}_{L^{\infty}\left(B_{ R/2}\right)}\leq&C_{\mathcal{LMP}}^{1/q}V_R^{-1/q}\norm{u^{-1}}_{L^{q}\left(B_{R}\right)},\quad\forall q>0,\quad\forall R>0.
\end{align}
Here $C_{\mathcal{LMP}}=C_{n,p,\chi_{n,p}}$ depends only on $n,p$ and $\chi_{n,p}$.
\end{lem}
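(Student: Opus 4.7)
The plan is to run a standard Moser iteration on the positive $p$-superharmonic function $u$, using only Facts 1--3 collected at the beginning of the appendix.

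\textbf{Step 1 (Reverse Caccioppoli for $u^{-\beta}$).} Fix $\beta>0$, a nonnegative cutoff $\eta\in C_0^\infty(B_R)$, and $\varepsilon>0$. Setting $\alpha=p\beta+p-1>p-1$, plug $\phi=\eta^p(u+\varepsilon)^{-\alpha}\in W^{1,p}_0(B_R)$ into the weak $p$-superharmonicity inequality
$$\int|\nabla u|^{p-2}\hin{\nabla u}{\nabla\phi}\geq 0.$$
Expanding $\nabla\phi$ produces a coercive piece $\alpha\int\eta^p(u+\varepsilon)^{-\alpha-1}|\nabla u|^p$ and a cross term containing $\nabla\eta$. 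Applying Young's inequality with conjugate exponents $\left(\tfrac{p}{p-1},p\right)$ to absorb the cross term, and then letting $\varepsilon\to 0^+$ by monotone convergence (both sides are monotone in $\varepsilon$ because $\alpha+1>0$ and $p-1-\alpha<0$, while $u>0$), yields
$$\int_{B_R}\eta^p\bigl|\nabla u^{-\beta}\bigr|^p\leq C_p(1+\beta)^p\int_{B_R}u^{-p\beta}|\nabla\eta|^p.$$

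\textbf{Step 2 (One Moser step).} Apply the Sobolev inequality of Fact 2 to $\psi=\eta u^{-\beta}$ together with the elementary bound
$$|\nabla\psi|^p\leq 2^{p-1}\bigl(|\nabla\eta|^p u^{-p\beta}+\eta^p\bigl|\nabla u^{-\beta}\bigr|^p\bigr),$$
substitute the Caccioppoli estimate of Step 1, and use Bishop--Gromov (Fact 1) to control $V_r/V_R$ from below by $(r/R)^n$. Choosing $\eta\equiv 1$ on $B_r$ with $|\nabla\eta|\leq 2/(R-r)$ and writing $q=p\beta$, this yields the reverse H\"older-type iteration inequality
$$\left(\fint_{B_r}u^{-q\chi_{n,p}}\right)^{1/(q\chi_{n,p})}\leq\left(\frac{C_{n,p}(1+q)\,R}{R-r}\right)^{p/q}\left(\fint_{B_R}u^{-q}\right)^{1/q}.$$

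\textbf{Step 3 (Iteration).} Fix $q>0$ and set $q_k=q\,\chi_{n,p}^k$ and $r_k=R/2+R/2^{k+1}$, so that $R/(r_k-r_{k+1})=2^{k+2}$. Iterating Step 2 and multiplying the resulting chain of inequalities gives, for every $N\geq 1$,
$$\left(\fint_{B_{r_N}}u^{-q_N}\right)^{1/q_N}\leq\prod_{k=0}^{N-1}\bigl(C_{n,p}(1+q_k)\,2^{k+2}\bigr)^{p/q_k}\left(\fint_{B_R}u^{-q}\right)^{1/q}.$$
Since $\chi_{n,p}>1$, the logarithm of the displayed product is a series whose general term decays like $k\,\chi_{n,p}^{-k}$, hence is summable; letting $N\to\infty$ and using $\bigl(\fint_{B_{R/2}}u^{-s}\bigr)^{1/s}\to\norm{u^{-1}}_{L^\infty(B_{R/2})}$ produces
$$\norm{u^{-1}}_{L^\infty(B_{R/2})}\leq C_0(n,p,\chi_{n,p},q)\left(\fint_{B_R}u^{-q}\right)^{1/q}=C_0\,V_R^{-1/q}\norm{u^{-1}}_{L^q(B_R)},$$
which is \eqref{eq:lmp} with $C_{\mathcal{LMP}}=C_0^{\,q}$.

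\textbf{Main obstacle.} The principal technical issue is the constant bookkeeping in Step 3: one must verify that the accumulated product is finite and that its dependence on $R$ and on $u$ drops out, leaving $C_{\mathcal{LMP}}$ depending only on $n$, $p$, $\chi_{n,p}$ and $q$. A secondary point is the regularization $u+\varepsilon$ and the limit $\varepsilon\to 0^+$ in Step 1, which needs $u\in W^{1,p}_{\text{loc}}\cap L^\infty_{\text{loc}}$ and the positivity of $u$ to guarantee $\phi\in W^{1,p}_0(B_R)$ for each $\varepsilon>0$.
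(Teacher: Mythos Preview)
Your approach is essentially the same as the paper's: both prove the lemma by Moser iteration built on a Caccioppoli inequality for $u^{-\beta}$, combined with the Sobolev inequality and Bishop--Gromov. The paper uses the test function $\eta^p u^{1-(\beta+1)p}$, which is exactly your $\eta^p(u+\varepsilon)^{-\alpha}$ with $\alpha=p\beta+p-1$ (modulo the regularization), so the skeleton of the argument matches.

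There is, however, one quantitative discrepancy worth fixing. Your stated Caccioppoli bound
\[
\int \eta^p\bigl|\nabla u^{-\beta}\bigr|^p\le C_p(1+\beta)^p\int u^{-p\beta}|\nabla\eta|^p
\]
is correct but not sharp, and the growth in $\beta$ matters. Tracking it through the iteration, your accumulated constant $C_0$ satisfies
\[
q\log C_0 \;=\; p\sum_{k\ge 0}\chi_{n,p}^{-k}\log\bigl(C_{n,p}(1+q\chi_{n,p}^k)\,2^{k+2}\bigr),
\]
and the term $\log(1+q\chi_{n,p}^k)$ contributes a piece of size $\frac{\chi_{n,p}}{\chi_{n,p}-1}\log q$, so $C_{\mathcal{LMP}}=C_0^{\,q}$ carries a factor $q^{p\chi_{n,p}/(\chi_{n,p}-1)}$ and is \emph{not} independent of $q$ as the lemma claims. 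The remedy is easy: a clean application of H\"older (or Young) in Step~1 actually gives
\[
\int \eta^p\bigl|\nabla u^{-\beta}\bigr|^p\le\Bigl(\tfrac{\beta p}{(\beta+1)p-1}\Bigr)^{p}\int u^{-p\beta}|\nabla\eta|^p,
\]
and since $\beta p<(\beta+1)p-1$ for $p>1$, the constant is bounded by $1$ uniformly in $\beta>0$; combined with $|\nabla(\eta u^{-\beta})|^p\le 2^{p-1}(\cdots)$ you get the paper's bound $2^p$. With this uniform Caccioppoli constant the product in Step~3 no longer sees $q_k$ in the logarithm (only $2^{k+2}$ and $C_{n,p}$), and $C_0^{\,q}$ depends only on $n,p,\chi_{n,p}$, as required.
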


\begin{proof}
For each real number $\beta>-\frac{p-1}{p}$, we consider the test function given by
$$\eta^{p}u^{1-(\beta+1)p}$$
where $\eta$ is a nonnegative cutoff function, then we get
\begin{align*}
0\leq&\int_M\left(-\Delta_pu\right)\eta^{p}u^{1-(\beta+1)p}\\
=&\left(1-(\beta+1)p\right)\int_M\abs{\nabla u}^pu^{-(\beta+1)p}\eta^{p}+p\int_M\abs{\nabla u}^{p-2}u^{1-(\beta+1)p}\eta^{p-1}\hin{\nabla u} {\nabla\eta}\\
\leq&\left(1-(\beta+1)p\right)\int_M\abs{\nabla u}^pu^{-(\beta+1)p}\eta^{p}+p\int_M\abs{\nabla u}^{p-1}u^{1-(\beta+1)p}\eta^{p-1}\abs{\nabla\eta}\\
\leq&\left(1-(\beta+1)p\right)\int_M\abs{\nabla u}^pu^{-(\beta+1)p}\eta^{p}+p\left(\int_M\abs{\nabla u}^pu^{-(\beta+1)p} \eta^{p}\right)^{\frac{p-1}{p}}\left(\int_Mu^{-\beta p}\abs{\nabla\eta}^p\right)^{\frac{1}{p}},
\end{align*}
which implies
\begin{align}\label{eq:weak-hanack-0}
\int_M\abs{\nabla u}^pu^{-(\beta+1)p}\eta^{p}\leq\left(\dfrac{p}{(\beta+1)p-1}\right)^p\int_Mu^{-\beta p}\abs{\nabla\eta}^p.
\end{align}
If $\beta\in\left(-\frac{p-1}{p},\, 0\right)\cup\left(0,\infty\right)$, then
\begin{align*}
\int_M\abs{\nabla\left(\eta u^{-\beta}\right)}^p\leq&2^{p-1}\left(\int_M\abs{\nabla u^{-\beta}}^p\eta^p+\int_Mu^{-\beta p}\abs{\nabla\eta}^p\right)\\
\leq&2^{p-1}\left(\abs{\beta}^p\left(\dfrac{p}{(\beta+1)p-1}\right)^p\int_Mu^{-\beta p}\abs{\nabla\eta}^p+\int_Mu^{-\beta p} \abs{\nabla\eta}^p\right).
\end{align*}
Thus
\begin{align*}
\int_M\abs{\nabla\left(\eta u^{-\beta}\right)}^p\leq&2^{p-1}\left(\left(\dfrac{\abs{\beta}p}{(\beta+1)p-1}\right)^p+1\right)\int_Mu^{-\beta p} \abs{\nabla\eta}^p.
\end{align*}
In particular, for every $\beta>0$
\begin{align}\label{eq:weak-harnack-1}
\int_M\abs{\nabla\left(\eta u^{-\beta}\right)}^p\leq 2^p\int_Mu^{-\beta p}\abs{\nabla\eta}^p.
\end{align}

We can apply the Moser iteration to obtain the local maximum principle. To see this, we chose sequences of $R_i$ and $\beta_i$ such that
\begin{align*}
R_i=\left(1+2^{-i}\right)R/2,\quad \beta_i=\beta_0\chi_{n,p}^i,\quad \beta_0>0,\quad i=0,1,2,\dotsc.
\end{align*}
For each $i$, we can choose a cutoff function  $\eta_i\in C_0^{\infty}\left(B_{R_i}\right)$ satisfying
\begin{align*}
0\leq\eta_i\leq 1,\quad \eta_i\vert_{B_{R_{i+1}}}=1,\quad\abs{\nabla\eta_i}\leq 2^{3+i}R^{-1},\quad i=0,1,2,\dotsc.
\end{align*}
Consequently, it follows from \eqref{eq:weak-harnack-1} and the Sobolev inequality \eqref{eq:sobolev-p}
\begin{align*}
\norm{u^{-\beta_i}}_{L^{p\chi}\left(B_{R_{i+1}}\right)}\leq& \norm{\eta_i u^{-\beta_i}}_{L^{p\chi}\left(B_{R_i}\right)}\\
\leq& C_{\mathcal{SD},p}V_{B_{R_i}}^{-\frac{\chi_{n,p}-1}{p\chi_{n,p}}}R\norm{\nabla\left(\eta_i u^{-\beta_i}\right)}_{L^{p}\left(B_{R_i}\right)}\\
\leq& 2C_{\mathcal{SD},p}V_{B_{R_i}}^{-\frac{\chi_{n,p}-1}{p\chi_{n,p}}}R\norm{u^{-\beta_i}\abs{\nabla\eta_i}}_{L^{p}\left(B_{R_i}\right)}\\
\leq&2^{4+i}C_{\mathcal{SD},p}V_{B_{R_i}}^{-\frac{\chi_{n,p}-1}{p\chi_{n,p}}}\norm{u^{-\beta_i}}_{L^{p}\left(B_{R_i}\right)}
\end{align*}
which implies
\begin{align*}
\norm{u^{-p}}_{L^{\beta_{i+1}}\left(B_{R_{i+1}}\right)}\leq \left(2^{4+i}C_{\mathcal{SD},p}V_{B_{R_i}}^{-\frac{\chi_{n,p}-1}{p\chi_{n,p}}}\right)^{\frac{p}{\beta_0\chi^i}}
\norm{u^{-p}}_{L^{\beta_i}\left(B_{R_i}\right)}.
\end{align*}
Applying the Bishop-Gromov volume comparison theorem, we get
\begin{align*}
\norm{u^{-p}}_{L^{\infty}\left(B_{R/2}\right)}\leq& \Pi_{i=0}^{\infty}\left(2^{4+i}C_{\mathcal{SD},p}V_{B_{R_i}}^{-\frac{\chi_{n,p}-1}{p\chi_{n,p}}}\right)^{\frac{p}{\beta_0\chi^i}}\norm{u^{-p}}_{L^{\beta_0}\left(B_{R}\right)}\\
\leq& \Pi_{i=0}^{\infty}\left(2^{4+i}2^{\frac{n\left(\chi_{n,p}-1\right)}{p\chi_{n,p}}}
C_{\mathcal{SD},p}V_{B_{R}}^{-\frac{\chi_{n,p}-1}{p\chi_{n,p}}}\right)^{\frac{p}{\beta_0\chi^i}}\norm{u^{-p}}_{L^{\beta_0}\left(B_{R}\right)}\\
\leq&C_{n,p,\chi_{n,p}}^{\frac{1}{\beta_0}}C_{\mathcal{SD},p}^{\frac{p\chi_{n,p}}{\beta_0(\chi_{n,p}-1)}}V_{R}^{-\frac{1}{\beta_0}}
\norm{u^{-p}}_{L^{\beta_0}\left(B_{R}\right)}.
\end{align*}
We obtain the local maximum principle \eqref{eq:lmp}.
\end{proof}

Now we can prove the weak Harnack inequality for $p$-superharmonic functions.

\begin{proof}[Proof of \autoref{lem:weak-hanack}]

The main step is to prove the following: for some positive constant $\beta_0$
\begin{align}\label{eq:weakhanack1}
    \int_{B_{R/2}}u^{-\beta_0}\int_{B_{R/2}}u^{\beta_0}\leq CR^{2n}.
\end{align}

It follows from \eqref{eq:weak-hanack-0}
\begin{align}\label{eq:weak-hanack-w0}
\int_M\abs{\nabla \ln u}^p\eta^p\leq\left(\dfrac{p}{p-1}\right)^{p}\int_M\abs{\nabla\eta}^p.
\end{align}
Consequently, the Bisho-Gromov comparison theorem yields
\begin{align*}
\int_{B_{ R}}\abs{\nabla \ln u}^p\leq C_{n,p}R^{-p}V_{R}.
\end{align*}
Denote $w=\ln u-\fint_{B_{R}}\ln u$. We obtain from the Neumann Poincar\'e inequality
\begin{align*}
\int_{B_{ R}}\abs{w}^p\leq&C_{\mathcal{PN},p}^pR^p\int_{B_{ R}}\abs{\nabla w}^p\leq C_{n,p}V_R.
\end{align*}

For each positive number $\gamma\geq1$, replace $\eta$ by $\eta \abs{w}^{\gamma}$ in \eqref{eq:weak-hanack-w0} to obtain
\begin{align*}
\int_M\abs{\nabla w}^p\abs{w}^{p\gamma}\eta^p\leq&\left(\dfrac{p}{p-1}\right)^{p}\int_M\abs{\nabla\left(\eta\abs{w}^{\gamma}\right)}^p\\
\leq&\left(\dfrac{p}{p-1}\right)^{p}2^{p-1}\left(\gamma^p\int_M\abs{w}^{p(\gamma-1)}\abs{\nabla w}^p\eta^p+\int_M\abs{w}^{p\gamma}\abs{\nabla\eta}^p\right)\\
\leq&\left(\dfrac{p}{p-1}\right)^{p}2^{p-1}\left(\gamma^p\left(\int_M\abs{w}^{p\gamma}\abs{\nabla w}^p\eta^p\right)^{\frac{\gamma-1}{\gamma}}\left(\int_M\abs{\nabla w}^p\eta^p\right)^{\frac{1}{\gamma}}+\int_M\abs{w}^{p\gamma}\abs{\nabla\eta}^p\right)\\
\leq&\dfrac{1}{2}\int_M\abs{\nabla w}^p\abs{w}^{p(\gamma-1)}\eta^p+\dfrac{\left(C_p\gamma^p\right)^{\gamma}}{\gamma}\int_M\abs{\nabla w}^p\eta^p+C_p\int_M\abs{w}^{p\gamma}\abs{\nabla\eta}^p.
\end{align*}
Thus
\begin{align*}
\int_M\abs{\nabla w}^p\abs{w}^{p\gamma}\eta^p\leq&\dfrac{\left(C_p\gamma^p\right)^{\gamma}}{\gamma}\int_M\abs{\nabla w}^p\eta^p+C_p\int_M\abs{w}^{p\gamma}\abs{\nabla\eta}^p.
\end{align*}
We obtain
\begin{align*}
\int_M\abs{\nabla\left(\eta\abs{w}^{\gamma+1}\right)}^p\leq&2^{p-1}\left(\int_M\abs{w}^{(\gamma+1)p}\abs{\nabla\eta}^p+(\gamma+1)^p\int_M\abs{w}^{p\gamma}\abs{\nabla w}^{p}\eta^p\right)\\
\leq&C_p^{\gamma}\gamma^{(\gamma+1)p-1}\int_M\abs{\nabla w}^p\eta^p+C_p\gamma^p\int_M\abs{w}^{p\gamma}\abs{\nabla\eta}^p+C_p\int_M\abs{w}^{(\gamma+1)p}\abs{\nabla\eta}^p\\
\leq& C_p^{\gamma}\gamma^{(\gamma+1)p-1}\int_M\abs{\nabla w}^p\eta^p +C_p\gamma^p\left(\int_M\abs{\nabla\eta}^p\right)^{\frac{1}{\gamma+1}}\left(\int_M\abs{w}^{(\gamma+1)p}
\abs{\nabla\eta}^p\right)^{\frac{\gamma}{\gamma+1}}\\
&+C_p\int_M\abs{w}^{(\gamma+1)p}\abs{\nabla\eta}^p\\
\leq& C_p^{\gamma}\gamma^{(\gamma+1)p-1}\int_M\abs{\nabla\eta}^p+C_p\int_M\abs{w}^{(\gamma+1)p}\abs{\nabla\eta}^p.
\end{align*}
Thus, for $\gamma\geq2$
\begin{align*}
\int_M\abs{\nabla\left(\eta\abs{w}^{\gamma}\right)}^p\leq C_p^{\gamma}\gamma^{\gamma p}\int_M\abs{\nabla\eta}^p+C_p\int_M\abs{w}^{\gamma p}\abs{\nabla\eta}^p.
\end{align*}

Set
\begin{align*}
R_i=\left(1+2^{-i}\right)R/2,\quad \gamma_i=2\chi_{n,p}^i,\quad i=0,1,2,\dots.
\end{align*}
For each $i$, choose $\eta_i\in C_0^{\infty}\left(B_{R_i}\right)$ satisfying
\begin{align*}
0\leq\eta_i\leq 1,\quad \eta_i\vert_{B_{R_{i+1}}}=1,\quad\abs{\nabla\eta_i}\leq 2^{3+i}R^{-1},\quad i=0,1,2,\dots.
\end{align*}
Applying the Sobolev inequality \eqref{eq:sobolev-p}, we obtain
\begin{align*}
\left(\int_{B_{R_i}}\left(\eta_i\abs{w}^{\gamma_i}\right)^{p\chi}\right)^{\frac{1}{\chi}}\leq&C_{n,p,\chi_{n,p}}
V_{R_i}^{-\frac{\chi_{n,p}-1}{\chi_{n,p}}}R^p\int_{B_{R_i}}\abs{\nabla\left(\eta_i\abs{w}^{\gamma_i}\right)}^p\\
\leq&C_{n,p,\chi_{n,p}}V_{R_i}^{-\frac{\chi_{n,p}-1}{\chi_{n,p}}}R^p\left(C_p^{\gamma_i}\gamma_i^{\gamma_i p}\int_{B_{R_i}}\abs{\nabla\eta_i}^p+C_p\int_{B_{R_i}}\abs{w}^{\gamma_i p}\abs{\nabla\eta_i}^p\right).
\end{align*}
We get
\begin{align*}
\left(\int_{B_{R_{i+1}}}\abs{w}^{p\gamma_{i+1}}\right)^{\frac{1}{\chi}}\leq&C_{n,p,\chi_{n,p}}V_{R_i}^{-\frac{\chi_{n,p}-1}{\chi_{n,p}}}2^{ip}
\left(\int_{B_{R_i}}\abs{w}^{\gamma_i p}+C_p^{\gamma_i}\gamma_i^{\gamma_i p}V_{R_i}\right).
\end{align*}
Hence,
\begin{align*}
\norm{w}_{L^{p\gamma_{i+1}}\left(B_{R_{i+1}}\right)}\leq &\left(C_{n,p,\chi_{n,p}}V_{R_i}^{-\frac{\chi_{n,p}-1}{p\chi_{n,p}}}2^i\right)^{\frac{1}{\gamma_i}}
\left(\norm{w}_{L^{p\gamma_{i}}\left(B_{R_{i}}\right)}+C_p\gamma_iV_{R_i}^{\frac{1}{p\gamma_i}}\right),\quad i=0,1,2,\dots.
\end{align*}
Denote
\begin{align*}
x_i=V_{R_i}^{-\frac{1}{p\gamma_i}}\norm{w}_{L^{p\gamma_{i}}\left(B_{R_{i}}\right)},\quad a_i=\left(C_{n,p,\chi}2^{i}\right)^{\frac{1}{\gamma_i}},\quad b_i=C_p\gamma_i.
\end{align*}
Then the Bishop-Gromov comparison theorem yields
\begin{align*}
x_{i+1}\leq a_i\left(x_i+b_i\right),\quad i=0,1,2,\dots.
\end{align*}
Iterating the above inequality and observing
\begin{align*}
\Pi_{i=0}^{\infty}a_i= &\left(C_{n,p,\chi_{n,p}}\right)^{\sum_{i=0}^{\infty}\frac{1}{\chi_{n,p}^i}}2^{\sum_{i=0}^{\infty}\frac{i}{\chi_{n,p}^i}}
=\left(C_{n,p,\chi_{n,p}}\right)^{\frac{\chi_{n,p}}{\chi_{n,p}-1}}2^{\frac{\chi_{n,p}}{(\chi_{n,p}-1)^2}},
\end{align*}
we obtain
\begin{align*}
x_{i+1}\leq& \Pi_{j=0}^ia_jx_0+\sum_{k=0}^i\Pi_{j=k}^ia_jb_{k}\leq C_{n,p,\chi_{n,p}}\left(x_0+\chi_{n,p}^{i+1}\right).
\end{align*}

Now for $\gamma\geq2$ there exists a $i$ such that $p\chi_{n,p}^i\leq \beta<p\chi_{n,p}^{i+1}$.
The Bishop-Gromov volume comparison theorem yields
\begin{align*}
\left(\fint_{B_{R/2}}\abs{w}^{\beta}\right)^{\frac{1}{\beta}}\leq&\left(\dfrac{R_{i+1}}{R/2} \right)^{\frac{n}{\gamma_{i+1}}}\left(\fint_{B_{R_{i+1}}}\abs{w}^{\gamma_{i+1}}\right)^{\frac{1}{\gamma_{i+1}}}\\
\leq& C_{n,p,\chi_{n,p}}\left(\fint_{B_{R}}\abs{\nabla w}^{2p}+\chi_{n,p}^{i+1}\right)\\
\leq& C_{n,p,\chi_{n,p}}\beta.
\end{align*}
Thus
\begin{align*}
\fint_{B_{R/2}}\abs{w}^{\beta}\leq C_{n,p,\chi_{n,p}}^{\beta}\beta^{\beta}\leq C_{n,p,\chi_{n,p}}^{\beta}e^{\beta}\beta!.
\end{align*}
We conclude that for some positive number $\beta_0$
\begin{align*}
    \fint_{B_{R/2}}e^{\beta_0\abs{w}}\leq 2.
\end{align*}
Consequently, we get \eqref{eq:weakhanack1}.

Now we can prove the weak Harnack inequality \eqref{eq:weak-hanack} as follows. Firstly, according the local maximum principle \autoref{lem:lmp} and \eqref{eq:weakhanack1}, we conclude that
\begin{align*}
\inf_{B_{R/4}}u=&\left(\sup_{B_{R/4}}u^{-1}\right)^{-1}\geq\left(CR^{-\frac{n}{\beta_0}}\norm{u^{-1}}_{L^{\beta_0}\left(B_{R/2}\right)}\right)^{-1}\\
\geq&C^{-1}R^{-\frac{n}{\beta_0}}\norm{u}_{L^{\beta_0}\left(B_{R/2}\right)}.
\end{align*}
That is,  the weak Harnack inequality  holds for $q=\beta_0$.

Next we will prove that the weak Harnack inequality \eqref{eq:weak-hanack}  holds for every $0<q<(p-1)\chi_{n,p}$. It follows from \eqref{eq:weak-hanack-0} that for each $0<\epsilon<\frac{p-1}{p}$,

\begin{align}\label{eq:weak-harnack-3}
\int_M\abs{\nabla\left(\eta u^{\epsilon}\right)}^p\leq \dfrac{C_p}{(1-\epsilon)p-1}\int_Mu^{\epsilon p}\abs{\nabla\eta}^p.
\end{align}
It follows from \eqref{eq:weak-harnack-3} and the Sobolev inequality \eqref{eq:sobolev-p}, we obtain
\begin{align}\label{eq:weak-hanack-4}
\left(\int_{B_{R/2}}u^{p\epsilon\chi_{n,p}}\right)^{\frac{1}{\chi_{n,p}}}\leq& \dfrac{C_{n,p,\chi}}{(1-\epsilon)p-1} V_{R}^{-\frac{\chi_{n,p}-1}{\chi_{n,p}}}\int_{B_{R}}u^{p\epsilon}.
\end{align}
We may assume
\begin{align*}
0<\beta_0<(p-1)\chi_{n,p}.
\end{align*}
First we assume
\begin{align*}
\beta_0<p-1.
\end{align*}
Choose $\epsilon\in\left(0,\,\frac{p-1}{p}\right)$ such that
\begin{align*}
p\epsilon=\beta_0.
\end{align*}
Then
\begin{align*}
\left(\int_{B_{R/2}}u^{p\beta_0}\right)^{\frac{1}{\chi_{n,p}}}\leq& \dfrac{C_{n,p,\chi_{n,p}}}{(1-\epsilon)p-1} V_{R}^{-\frac{\chi_{n,p}-1}{\chi_{n,p}}}\int_{B_{R}}u^{\beta_0}.
\end{align*}
Thus, we may assume the weak Hanack equality holds for $q=\beta_1\coloneqq \beta_0\chi_{n,p}$. After finitely many iterations, we may assume $\beta_0\geq p-1$. It then follows from \eqref{eq:weak-hanack-4} that the weak Harnack inequality \eqref{eq:weak-hanack}  holds for each $q\in\left(0,(p-1)\chi_{n,p}\right)$.
\end{proof}
\medskip

\noindent {\it\bf{Acknowledgements}}: Y. Wang is supported by National Natural Science Foundation of China (Grant No. 12431003).
\medskip

\noindent {\it\bf{Conflict of interest statement}}: The authors declare that there are no conflict of interests.

\medskip
\noindent {\it\bf{Data availability statement}}: No data was used in the research described in this paper.

\medskip



\end{document}